\newcommand{\PreserveBackslash}[1]{\let\temp=\\#1\let\\=\temp}
\newcolumntype{C}[1]{>{\PreserveBackslash\centering}p{#1}}
\newcolumntype{R}[1]{>{\PreserveBackslash\raggedleft}p{#1}}
\newcolumntype{L}[1]{>{\PreserveBackslash\raggedright}p{#1}}
\def\colwidth{0.5cm}
\def\scaleConst{0.60}
\newtheorem{theorem}{Theorem}
\newtheorem{lemma}{Lemma}
\newtheorem{corollary}{Corollary}
\newtheorem{proposition}{Proposition}
\newtheorem{observation}{Observation}
\newtheorem{conjecture}{Conjecture}
\newtheorem{question}{Question}
\newcommand{\set}[1]{\ensuremath{\left\{#1 \right\}}}
\newcommand{\chis}[1]{%
	\ensuremath{%
		\protect{%
			\chi_{\mathrm{st}}'(#1)%
		}
	}
}
\newcommand\cartp{%
	\mathbin{%
		\ensuremath{\square}	
	}
}
\title{Star Edge-Coloring of Square Grids}
\author
{	
	P\v{r}emysl Holub\thanks{Faculty of Applied Sciences, University of West Bohemia, Pilsen, Czech Republic.\newline
		E-Mails: \texttt{holubpre@kma.zcu.cz, mmockov@ntis.zcu.cz}}, \
		Borut Lu\v{z}ar\thanks{Faculty of Information Studies, Novo mesto, Slovenia. 
		E-Mail: \texttt{borut.luzar@gmail.com}}, \
		Erika Mihalikov\'{a}\thanks{Faculty of Science, Pavol Jozef \v Saf\'{a}rik University, 
		Ko\v{s}ice, Slovakia.\newline
		E-Mails: \texttt{erika.mihalikova@student.upjs.sk, roman.sotak@upjs.sk}}, \\
	Martina Mockov\v{c}iakov\'{a}\footnotemark[1], \
	Roman Sot\'{a}k\footnotemark[2]	
}
\begin{document}
\maketitle

{
	\abstract
	{
		\textit{A star edge-coloring} of a graph $G$ is a proper edge-coloring without bichromatic paths or cycles of length four. 	
		The smallest integer $k$ such that $G$ admits a star edge-coloring with $k$ colors is the \textit{star chromatic index} of $G$.
		In the seminal paper on the topic, Dvo\v{r}\'{a}k, Mohar, and \v{S}\'{a}mal asked if the star chromatic index of complete graphs
		is linear in the number of vertices and gave an almost linear upper bound. 
		Their question remains open, and consequently, to better understand the behavior of the star chromatic index,
		this parameter has been studied for a number of other classes of graphs.
		In this paper, we consider star edge-colorings of square grids;
		namely, the Cartesian products of paths and cycles and the Cartesian products of two cycles.
		We improve previously established bounds and, as a main contribution,
		we prove that the star chromatic index of graphs in both classes is either $6$ or $7$ except for prisms.
		Additionally, we give a number of exact values for many considered graphs.
		
		\bigskip
		{\bf Keywords:} star edge-coloring, star chromatic index, square grid, Cartesian product.
	}
}

\section{Introduction}

A proper edge-coloring of a graph $G$ is called a \textit{star edge-coloring} 
if there is neither bichromatic path nor bichromatic cycle of length four. 
The minimum number of colors for which $G$ admits a star edge-coloring is called the \textit{star chromatic index} and we denote it by $\chis{G}$.

The star edge-coloring was defined in 2008 by Liu and Deng~\cite{LiuDen08}, 
and was motivated by the vertex version introduced by Gr\"{u}nbaum~\cite{Gru73}.
Despite a number of papers have already been published about this coloring,
we have a very limited knowledge about it.
In particular, the exact value of the star chromatic index of complete graphs is still not known,
although some relatively strong lower and upper bounds have been determined by Dvo\v{r}\'{a}k et al.
in their seminal paper~\cite{DvoMohSam13}.
\begin{theorem}[Dvo\v{r}\'{a}k, Mohar, \v{S}\'{a}mal, 2013]
	\label{thm:complete}
	The star chromatic index of the complete graph $K_n$ satisfies
	$$
		2n (1 + o(1)) \le \chis{K_n} \le n \frac{2^{2\sqrt{2}(1+o(1))\sqrt{\log{n}}}}{(\log{n})^{1/4}}\,.
	$$
	In particular, for every $\epsilon >0$ there exists a constant $C$ such that $\chis{K_n} \le C \, n^{1+\epsilon}$ for every $n\ge 1$.
\end{theorem}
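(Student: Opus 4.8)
The plan is to establish the two bounds by completely different methods: a local double-counting argument for the lower bound that exploits the forbidden bichromatic path on four edges, and an explicit algebraic construction for the upper bound that reduces the only remaining obstruction to the existence of large sets free of three-term arithmetic progressions. The closing ``in particular'' clause is then immediate, since $2^{2\sqrt2(1+o(1))\sqrt{\log n}} = n^{o(1)}$, so the upper bound is $n^{1+o(1)} \le C\,n^{1+\epsilon}$ once $n$ is large, and the finitely many small $n$ are absorbed into $C$.

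For the lower bound I would work around a single edge. Fix an edge $uv$, write $c_0$ for its color, and let $A$ and $B$ be the sets of colors \emph{other than} $c_0$ appearing at $u$ and at $v$; then $|A| = |B| = n-2$ and $\chis{K_n} \ge |A\cup B| + 1 = 2n - 3 - |A\cap B|$. The heart of the matter is to bound the overlap $|A\cap B|$. If a color $a$ lies in $A\cap B$, it sits on edges $ux$ and $vy$ with $x \ne y$; were $x$ incident to a $c_0$-colored edge $xw$, the walk $w\,x\,u\,v\,y$ would be a bichromatic four-edge path in colors $\set{c_0,a}$, which is forbidden, and symmetrically for $y$. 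Hence both partners $x,y$ avoid the matching $M_{c_0}$, so $a\mapsto x$ and $a\mapsto y$ inject $A\cap B$ into the set $S$ of vertices left uncovered by $M_{c_0}$. Moreover these two images are disjoint: a coincidence $x_a = y_{a'}$ with $a\ne a'$ would again produce a forbidden bichromatic four-edge path, now in colors $\set{a,a'}$, through that common vertex. Therefore $2\,|A\cap B| \le |S| = n - 2|M_{c_0}|$, and choosing $uv$ inside a largest color class already yields a lower bound of the shape $c\,n$. The delicate part — and where I expect the real work to lie — is pushing the constant up to the claimed value $2$: this clean estimate loses ground precisely when all color classes are far from perfect matchings, so closing the gap needs a more global accounting of how the uncovered vertices are shared across colors.

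For the upper bound I would place the vertices on $\mathbb{Z}_m$, with $m$ the smallest odd integer at least $n$, and start from the base coloring $c(\set{i,j}) = i+j \pmod m$. This is proper, and a one-line computation shows it has \emph{no} bichromatic $C_4$ whatsoever: such a cycle would force $2a = 2c$ for two of its vertices, impossible for odd $m$. A bichromatic path $v_1v_2v_3v_4v_5$ with alternating sum-colors $\alpha,\beta,\alpha,\beta$, on the other hand, forces $v_1 + v_5 = 2v_3$, i.e. the odd-indexed vertices form a three-term arithmetic progression. So the only obstruction to $c$ being a star edge-coloring is exactly these progressions, and I would destroy them by refining within each sum-class according to the edge differences, assigning a common refinement color only to differences lying in a shared progression-free set. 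Behrend's construction supplies progression-free sets of density $2^{-2\sqrt2(1+o(1))\sqrt{\log m}}(\log m)^{1/4}$, so that $2^{2\sqrt2(1+o(1))\sqrt{\log m}}/(\log m)^{1/4}$ refinement colors suffice; multiplying this subpolynomial factor by the $\approx n$ sum-colors gives the stated bound.

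The main obstacle throughout is matching constants rather than orders of magnitude. On the upper side one must verify that the difference-refinement genuinely annihilates every bichromatic four-edge path while spending no more than the Behrend number of extra colors — concretely, that the two refinement colors carried by the $\alpha$-edges (or the $\beta$-edges) of a would-be path are forced apart exactly by the progression-free property — so that the exponent $2\sqrt2\sqrt{\log n}$ is actually attained and not merely some $O(\sqrt{\log n})$. On the lower side, as noted, the passage from a linear bound to the sharp factor $2$ is the crux. Everything else is bookkeeping: checking properness, confirming the distinctness of the vertices in each forbidden path, and reducing the cyclic group size $m$ back to $n$.
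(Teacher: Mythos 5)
Your plan can only be judged against the original argument of Dvo\v{r}\'{a}k, Mohar, and \v{S}\'{a}mal: the paper does not prove this theorem at all, it quotes it, recording only that the upper bound rests on ``a nontrivial result about sets without arithmetic progressions'' (fed through complete bipartite graphs via the displayed recursion $\chis{K_n} \le \sum_i 2^{i-2}\,\chis{K_{\lceil n/2^i\rceil,\lceil n/2^i\rceil}}$) and that the lower bound uses ``an elegant double counting approach''. Your proposal matches that description in spirit, but both halves contain genuine gaps, only one of which you flag accurately. For the lower bound, your single-edge argument is correct as far as it goes (the two injections into the set of vertices missed by $M_{c_0}$ are valid, including the degenerate cases, which produce forbidden $4$-cycles), but it provably cannot reach the constant $2$: from $\chis{K_n}=k \ge 2n-3-|A\cap B|$ and $|A\cap B| \le \tfrac{1}{2}(n-2|M_{c_0}|)$ you get $k \ge \tfrac{3n}{2}-3+|M_{c_0}|$, and the only control you have on $|M_{c_0}|$ is $|M_{c_0}| \ge \binom{n}{2}/k$, which solves to $k \gtrsim \tfrac{3+\sqrt{17}}{4}\,n \approx 1.78\,n$. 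Even if some color class were a perfect matching, the estimate caps at $2n-3$, and nothing in the argument produces such a class. So ``pushing the constant up to $2$'' is not a refinement of your estimate; it requires a different, global counting idea, and as written the stated lower bound is simply not proved.

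For the upper bound, a step fails outright, precisely at the point you defer. The reduction is fine: in the sum-coloring on $\mathbb{Z}_m$ ($m$ odd) a bichromatic $4$-path forces $v_1+v_5=2v_3$. But refining by progression-free difference classes does not destroy these paths. Put $x=v_3-v_1\ne 0$ and $d=v_2-v_1$; the four edge differences are, up to sign, $d,\ d-x,\ d-2x,\ d-3x$, a four-term progression whose first and third terms (the $\alpha$-edges) must lie in one class $A_s$ and whose second and fourth terms (the $\beta$-edges) in another class $A_t$. The progression that would force the two $\alpha$-differences apart has its \emph{middle} term $d-x$ in the other class $A_t$, so progression-freeness of each class separately is powerless whenever $s\ne t$. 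Concretely, if the partition puts $1,5$ in one class and $3,7$ in another (both pairs are progression-free, so nothing in your prescription forbids this), then the path on the vertices $0,\,1,\,-2,\,3,\,-4$ of $\mathbb{Z}_m$ has sums $1,-1,1,-1$ and differences $1,3,5,7$, hence is bichromatic in your refined coloring. What is needed is a \emph{cross-class} property of the partition, not per-class progression-freeness; supplying it is exactly the missing idea, and it is also why the original proof does not color $K_n$ directly by sums and differences but bounds $\chis{K_{n,n}}$ first and then assembles $K_n$ through the recursive decomposition quoted above. The closing ``in particular'' clause is the only part of the statement your plan delivers as written.
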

\noindent
They proved the upper bound using a nontrivial result about sets without arithmetic progressions, and up till now, it is still the best known.
For the lower bound, they used an elegant double counting approach.
The authors of~\cite{BezLuzMocSotSkr16} observed a small improvement in their proof 
and obtained the bound $\chis{K_n} \ge 3n(n-1)/(n+4)$ (see~\cite{Moc13} for a proof),
which gives the exact values for the chromatic index of $K_n$, for $n \in \set{1,2,3,4,8}$.
However, despite all efforts, the asymptotic behavior of the star chromatic index of complete graphs is not known,
and in~\cite{DvoMohSam13} the following question has been asked.
\begin{question}[Dvo\v{r}\'{a}k, Mohar, \v{S}\'{a}mal, 2013]
	What is the true order of magnitude of $\chis{K_n}$? Is $\chis{K_n} = O(n)$?
\end{question}

Another class of graphs with highly regular structure are complete bipartite graphs.
They are important for better understanding of the coloring already on their own,
and also, as Dvo\v{r}\'{a}k et al.~\cite{DvoMohSam13} observed, 
the bounds for their star chromatic index provide bounds for the index of complete graphs.
$$
	\chis{K_{n,n}} - n \, \le \, \chis{K_n} \, \le \, \sum_{i=1}^{\lceil \log_2 n \rceil} 2^{i-2} \, \chis{K_{\lceil n/2^i \rceil, \lceil n/2^i \rceil}}\,.
$$
Recently, Casselgren et al.~\cite{CasGraRas19} considered complete bipartite graphs 
and proved the tight upper bound for $K_{3,r}, r\ge 5$, derived a lower and upper bound for $K_{4,s}, s\ge 4$,
and, using computer, they also determined the star chromatic index for some complete bipartite graphs of small order.

Star edge-coloring has been studied also for other classes of graphs,
e.g., graphs with maximum degree $3$~\cite{DvoMohSam13,LeiShiSon18,LeiShiSonWan18} and $4$~\cite{WanWanWan19},
subcubic Halin graphs~\cite{CasGraRas19}, 
outerplanar graphs~\cite{BezLuzMocSotSkr16,WanWanWan18}, 
and planar graphs with various constraints~\cite{WanWanWan18}.
Moreover, the list version of the star edge-coloring has also been investigated (see, e.g.,~\cite{KerRas18,LuzMocSot19}).
Finally, there is also a complexity result on the topic; 
namely, it is NP-complete to decide whether $3$ colors suffice for a star edge-coloring of a subcubic multigraph~\cite{LeiShiSon18}.

Since most of the obtained upper bounds for the star chromatic index are not tight and many questions remain open,
we focus our attention to graphs with a relatively simple structure, i.e. to the Cartesian products of graphs. 

The star edge-coloring of the Cartesian products of graphs has already been considered by Omoomi and Dastjerdi~\cite{OmoDas18}.
They established an upper bound for the star chromatic index of the Cartesian product of two arbitrary graphs, 
proved its exact values for the Cartesian product of two paths (Theorem~\ref{thm:PnPn}),
and they started investigation on the Cartesian products of a path and a cycle, and the Cartesian product of two cycles (i.e., square grids).
They further proved upper bounds for $d$-dimensional grids and $d$-dimensional hypercubes.

Motivated by the results presented in~\cite{OmoDas18}, 
in this paper, we consider star edge-coloring of square grids; 
in particular, the Cartesian products of two cycles and the Cartesian products of paths and cycles.
Apart from the usual combinatorial methods,
due to the complexity of the problems considered in this paper, 
we have used computer to obtain star edge-colorings of small graphs and to establish some of the lower bounds.
Standard (formal) mathematical proofs would require enourmous amount of case analysis,
while their contribution to the theory would be minimal. 
We establish exact bounds for the star chromatic index of many graphs from the two considered classes,
and show that the upper bound for the chromatic index of both Cartesian products is $7$.

The paper is structured as follows.
We give our notation and prove some auxiliary results in Section~\ref{sec:prel}.
Section~\ref{sec:alg} contains the algorithm used in our computations
and describes the preprocessing procedures used in them.
In Section~\ref{sec:main}, we present the main results of this paper,
and we list some open problems in Section~\ref{sec:conc}.


\section{Preliminaries}
\label{sec:prel}

In this section, we present some additional terminology used in the paper and give auxiliary results.
We abbreviate a `star edge-coloring with $k$ colors' to a `star $k$-edge-coloring', 
and, if it is clear from the context, sometimes we just write `coloring' instead of `star edge-coloring'.

The \textit{Cartesian product of graphs $G$ and $H$}, denoted by $G \cartp H$, 
is the graph with the vertex set $V(G) \times V(H)$ and edges between the vertices $(u,v)$ and $(u',v')$ if:
\begin{itemize}
	\item $uu'\in E(G)$ and $v=v'$ (a \emph{$G$-edge}), or
	\item $u=u'$ and $vv'\in E(H)$ (an \emph{$H$-edge}).
\end{itemize}
We call the graphs $G$ and $H$ the \textit{factor graphs}.
The \emph{$G$-f\mbox{}iber} with respect to $v\in V(H)$, denoted by $G_v$, is the copy of $G$ in $G \cartp H$ 
induced by the vertices having $v$ as the second component. 
Analogously, the \emph{$H$-f\mbox{}iber} with respect to $u \in V(G)$, 
denoted by $H_u$, is the copy of $H$ in $G \cartp H$ induced by the vertices having $u$ as the first component. 

Since the Cartesian product of two paths is a subgraph of the Cartesian product of a path and a cycle,
and the Cartesian product of a path and a cycle is a subgraph of the Cartesian products two cycles,
we have the following sequence of inequalities.
\begin{observation}
	\label{obs:sub}
	For every pair of positive integers $m$ and $n$, where $m \ge 3$ and $n \ge 3$, we have
	$$
		\chis{P_m \cartp P_n} \le \chis{C_m \cartp P_n} \le \chis{C_m \cartp C_n}.
	$$
\end{observation}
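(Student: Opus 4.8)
The plan is to deduce both inequalities from a single general principle: the star chromatic index is monotone under the subgraph relation, that is, if $H$ is a subgraph of $G$ then $\chis{H} \le \chis{G}$. Granting this, it suffices to exhibit the two containments $P_m \cartp P_n \subseteq C_m \cartp P_n$ and $C_m \cartp P_n \subseteq C_m \cartp C_n$, after which the stated chain of inequalities follows at once.

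First I would establish the monotonicity claim. Let $H$ be a subgraph of $G$ and fix any star edge-coloring of $G$ that uses $\chis{G}$ colors. Restricting this coloring to the edges of $H$ produces a coloring of $H$ with at most $\chis{G}$ colors. The restriction is still proper, because any two adjacent edges of $H$ are also adjacent in $G$ and hence were assigned distinct colors. Furthermore, it contains no bichromatic path or cycle on four edges: any such configuration in $H$ occurs on the very same edges in $G$ with the same two colors, contradicting that the coloring of $G$ was a star edge-coloring. Thus the restriction is a star edge-coloring of $H$, giving $\chis{H} \le \chis{G}$.

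Next I would verify the two containments using the fiber structure. Since $m \ge 3$, deleting a single edge from $C_m$ leaves a spanning path isomorphic to $P_m$, so $P_m$ is a spanning subgraph of $C_m$; symmetrically $P_n$ is a spanning subgraph of $C_n$ when $n \ge 3$. For the first containment, the graphs $P_m \cartp P_n$ and $C_m \cartp P_n$ share the same vertex set, and they differ only in the closing edge of the cycle appearing in each of the $n$ many $C_m$-fibers; deleting these $n$ edges from $C_m \cartp P_n$ yields exactly $P_m \cartp P_n$. For the second containment, $C_m \cartp P_n$ is obtained from $C_m \cartp C_n$ by deleting the closing edge of the cycle in each of the $m$ many $C_n$-fibers. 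Combining the two containments with the monotonicity principle completes the argument.

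I do not expect a genuine obstacle here, which is precisely why the statement is recorded as an observation: it reduces to the routine fact that restricting a star edge-coloring to a subgraph remains a star edge-coloring. The only point deserving attention is the hypothesis $m,n \ge 3$, which is exactly what guarantees that $C_m$ and $C_n$ are well-defined simple cycles containing $P_m$ and $P_n$ as subgraphs; for smaller values the containments, and hence the inequalities, need not hold.
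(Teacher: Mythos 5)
Your proposal is correct and follows exactly the paper's reasoning: the paper justifies this observation by the same two subgraph containments ($P_m \cartp P_n \subseteq C_m \cartp P_n \subseteq C_m \cartp C_n$) together with the (implicit) fact that restricting a star edge-coloring to a subgraph remains a star edge-coloring. Your write-up merely makes explicit the monotonicity argument that the paper leaves unstated.
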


Having a star edge-coloring of the Cartesian product of an $n$-cycle and a graph $H$,
we can extend it to a coloring of the Cartesian product of a cycle of length $k\cdot n$ and $H$.
\begin{lemma}
	\label{lem:tiling}
	For every integers $k$ and $m$, where $k \ge 2$ and $m \ge 3$, and for every graph $H$, we have
	$$
		\chis{C_{k \cdot m} \cartp H} \le \chis{C_m \cartp H}\,.
	$$
\end{lemma}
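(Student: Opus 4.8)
The plan is to transport a star edge-coloring of $C_m \cartp H$ to $C_{km} \cartp H$ through the natural folding map. Fix an optimal star edge-coloring $\varphi$ of $C_m \cartp H$, label the vertices of $C_{km}$ and $C_m$ by $0,1,\dots,km-1$ and $0,1,\dots,m-1$ with edges joining consecutive labels modulo the cycle length, and define $\pi$ by sending $(j,v)$ to $(j \bmod m, v)$. This map sends every $C_{km}$-edge to a $C_m$-edge and every $H$-edge to the corresponding $H$-edge, so I can define a candidate coloring $\psi$ of $C_{km} \cartp H$ by giving each edge $e$ the $\varphi$-colour of $\pi(e)$. The whole task then reduces to proving that $\psi$ is a star edge-coloring, since it clearly uses no more colours than $\varphi$.

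Properness is the first and easier step. As $m \ge 3$, the labels $j-1, j, j+1$ are pairwise distinct modulo $m$, hence the two $C_{km}$-edges at $(j,v)$ project to the two distinct $C_m$-edges at $(j \bmod m, v)$, while the $H$-edges at $(j,v)$ project bijectively to those at $(j \bmod m, v)$. Thus $\pi$ induces a bijection between the edges incident with $(j,v)$ and those incident with its image, and since $\varphi$ is proper and $\psi$ inherits its colours, $\psi$ is proper as well.

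The main obstacle is ruling out bichromatic paths and cycles of length four, the difficulty being that such a configuration upstairs need not project to a simple path or cycle downstairs, so one cannot simply claim its image is forbidden. Suppose $\psi$ has a bichromatic walk $w_0 w_1 w_2 w_3 w_4$ of length four coloured $a,b,a,b$ (with $w_4=w_0$ in the cyclic case), and set $f_i = \pi(w_{i-1}w_i)$. I would first argue that $f_1, f_2, f_3, f_4$ are pairwise distinct: edges of different colours differ automatically, while $f_1 = f_3$ or $f_2 = f_4$ would force either a loop or a pair of parallel edges, neither of which occurs in the simple graph $C_m \cartp H$ under the proper coloring $\varphi$. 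These four distinct edges all carry colours in $\set{a,b}$ and are linked consecutively through $\pi(w_1), \pi(w_2), \pi(w_3)$, so they lie in a single component of the subgraph of $C_m \cartp H$ spanned by $a$ and $b$. But in a star edge-coloring this subgraph has maximum degree two and forbids bichromatic four-paths and four-cycles, so each of its components is a path with at most three edges; a component with four distinct edges is impossible, contradicting the star property of $\varphi$. Hence $\psi$ contains no bichromatic four-path or four-cycle and is the desired star edge-coloring.
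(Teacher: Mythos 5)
Your construction is exactly the paper's: both proofs pull back an optimal star edge-coloring of $C_m \cartp H$ along the projection $(j,v)\mapsto (j \bmod m,\, v)$, and the content lies entirely in verifying the star property of the pullback. There you genuinely diverge. The paper works locally on the grid: it covers $C_{km}\cartp H$ by windows of $m$ consecutive $H$-fibers, observes that on each window the pulled-back coloring is a copy of $\sigma$ restricted to $C_m\cartp H$ minus a matching of cycle edges, and then separately disposes of the wrap-around $4$-paths and $4$-cycles lying in no window, which exist only for $m\in\set{3,4}$ and require a short ad hoc case analysis. You instead argue globally by contradiction: a bichromatic $4$-walk upstairs projects to four edges downstairs, which are pairwise distinct (different colors settle four of the six pairs, and either remaining coincidence forces a loop or a pair of parallel edges, impossible in the simple target graph because the two candidate parallel edges carry different colors), and these four distinct, consecutively linked edges would then lie in one component of the subgraph spanned by the two colors, whereas in a star edge-coloring every such component is a path with at most three edges. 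Your route is uniform in $m$, needs no case analysis, and in fact proves the more general statement that star edge-colorings pull back along any covering map of simple graphs --- the same mechanism behind Theorem~\ref{thm:cubic}(b) and the paper's corollary on hexagonal grids. What the paper's windowing buys in exchange is reusability: essentially the same window argument drives the proof of Lemma~\ref{lem:combine}, where the coloring is stitched from two different colorings and is no longer a pullback along a covering, so your argument would not transfer there. In a final write-up you should expand the ``loop or parallel edges'' step into its two subcases of $\set{\pi(w_0),\pi(w_1)}=\set{\pi(w_2),\pi(w_3)}$ (and similarly for $f_2=f_4$), but both close exactly as you indicate, so the proof stands.
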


\begin{proof}
	Let $C_m = u_1 \dots u_m u_1$, 
	$C_{k\cdot m} = v_1 \dots v_{k \cdot m} v_{1}$, 
	and $V(H)=\{w_1,\dots,w_n\}$. 
	Moreover, let $p:\{1,\dots,k \cdot m\} \to \{1,\dots,m\}$ be an assignment given by $p(t)=s$ if and only if $(t-s)$ is divisible by $m$.

	Let $\sigma$ be a star edge-coloring of $C_m \cartp H$.	
	Consider an edge $e=(v_i,w_a)(v_j,w_b)$ of $C_{k \cdot m} \cartp H$. 
	By the definition of the Cartesian product, we have $i=j$ or $a=b$. 
	Note that if $a = b$, then $|i-j|=1$, and thus we may assume $j=i+1$.	
	We define a proper edge-coloring $\tau$ of $C_{k \cdot m} \cartp H$ as follows.	
	If $i=j$, then set $\tau((v_i,w_a)(v_i,w_b)) = \sigma((u_{p(i)},w_a)(u_{p(i)},w_b))$.
	In the case $a=b$, we set	
	$\tau((v_i,w_a)(v_j,w_a))=\sigma((u_{p(i)},w_a)(u_{p(j)},w_a))$.
	
	Now we show that $\tau$ is also a star edge-coloring.
	For an integer $s$, where $1 \le s \le k \cdot m$,
	let $G_{s}$ be the graph induced by the vertices $\{(v_\ell,w)\}$, 
	where $\ell \in \set{s+1,\dots,s+m}$ (the values $s+1, \dots, s+m$ are taken modulo $k \cdot m$) and all $w \in V(H)$,
	i.e., $G_s$ is the graph induced on $m$ consecutive $H$-fibers.
	Observe that the coloring $\tau$ on $G_s$ corresponds to a coloring $\sigma$
	of the subgraph of $C_{m}\square H$ without the edges $(u_{p(s+m)},w)(u_{p(s+1)},w)$, for all $w\in V(H)$. 
	Therefore, every $4$-path and every $4$-cycle in $C_{k \cdot m} \cartp H$, contained in some $G_s$,
	is not bichromatic.
	
	Finally, if a $4$-path or a $4$-cycle is not contained in any $G_s$, then it contains at least $m$ edges
	of type $(v_i,w_a)(v_j,w_a)$ (i.e., only when $m \in \set{3,4}$).
	However, in the case of $m=3$, three consecutive edges on every $C_m$-fiber receive three distinct colors, 
	and hence no $4$-path with three consecutive edges on a $C_m$-fiber is bichromatic.
	If a $4$-path has two consecutive edges on a $C_m$-fiber, an edge in an $H$-fiber, and the fourth edge in another $C_m$-fiber,
	then its coloring corresponds to a coloring of some $4$-path in $C_m \cartp H$, which is not bichromatic.
	In the case of $m=4$, we only have $4$-cycles, whose colorings correspond to a coloring of a $4$-cycle by
	$\sigma$ in some $C_m$-fiber,
	and hence they are not bichromatic.
\end{proof}

We continue by showing how star edge-colorings of two Cartesian products,
each having at least one cycle as a factor, can be combined.
Let $m$ and $n$ be a pair of integers, where $3 \le m < n$,
and let $v_1,\dots,v_n$ be consecutive vertices of the cycle $C_n$.
We say that a star edge-coloring $\sigma$ of $C_n \cartp H$ \textit{includes} a star edge-coloring of $C_m \cartp H$ 
if the coloring $\sigma^*$ of the subgraph of $C_n \cartp H$ 
induced by the vertices of $m$ consecutive $H$-fibers $H_{v_1},\dots,H_{v_m}$, 
together with the additional edges $e_w=(v_1,w)(v_m,w)$, for all $w \in V(H)$,
where we set $\sigma^*(e_w) = \sigma((v_1,w)(v_n,w))$, is a star edge-coloring.

Symmetrically, we can say that a star $k$-edge-coloring of $H \cartp C_n$ includes a star edge-coloring of $H \cartp C_m$.
Note that the star edge-coloring of $C_{k \cdot m} \cartp H$, constructed in the proof of Lemma~\ref{lem:tiling}, 
includes a star edge-coloring of $C_m \cartp H$.

\begin{lemma}
	\label{lem:combine}
	If for a pair of positive integers $m$ and $n$, where $m < n$, 
	a star $k$-edge-coloring of $C_n \cartp H$ includes a star edge-coloring of $C_m \cartp H$, 
	then, for every pair of non-negative integers $p$ and $q$, we have
	$$
		\chi_{st}^{\prime}(C_{p \cdot m + q \cdot n} \cartp H) \le k\,.
	$$	
\end{lemma}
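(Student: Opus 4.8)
The plan is to build the desired star $k$-edge-coloring of $C_{pm+qn} \cartp H$ by concatenating, around the long cycle, $p$ copies of the included coloring of $C_m \cartp H$ and $q$ copies of the given coloring of $C_n \cartp H$, glued together along their common seam. Write $L=pm+qn$ and assume $p+q\ge 1$ (the case $p=q=0$ being vacuous); note $L\ge m\ge 3$, so $C_L$ is well defined. I would cut $C_L$ into $p+q$ consecutive arcs arranged in any fixed cyclic order, $p$ of them carrying $m$ fibers and $q$ of them carrying $n$ fibers. On each short (resp.\ long) arc I copy the included coloring $\sigma^*$ of $C_m \cartp H$ (resp.\ the given coloring $\sigma$ of $C_n \cartp H$) under the order-preserving identification of the arc's fibers with $v_1,\dots,v_m$ (resp.\ $v_1,\dots,v_n$) that sends the arc's first fiber to $v_1$. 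The $C$-edges joining the last fiber of one arc to the first fiber of the next (the \emph{junctions}) all receive the common seam color $\sigma((v_1,w)(v_n,w))$, which by the definition of inclusion equals $\sigma^*(e_w)$.

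The construction hinges on one compatibility observation, which I would isolate first: every arc begins with the fiber-coloring of $v_1$, and every junction reuses the seam color, so on any bounded window straddling a junction the new coloring $\tau$ reproduces the coloring near the seam of $\sigma$ (when the arc to the left is long) or of $\sigma^*$ (when it is short); the two agree on all low indices, so the match is consistent for windows of at most $m$ fibers. In particular $\tau$ is a proper edge-coloring, since at each vertex its palette coincides with that of the corresponding vertex of $\sigma$ or $\sigma^*$.

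Next I would verify the star condition by localizing every $4$-cycle and $4$-path of $C_L \cartp H$ to a window of consecutive fibers matching $\sigma$ or $\sigma^*$. A $4$-cycle, and any $4$-path containing an $H$-edge, occupies at most four consecutive fibers and hence crosses at most one junction; such a subgraph — fibers and links together — is the image of a genuine $4$-cycle or $4$-path of $\sigma$ or of $\sigma^*$ read across its seam, and is therefore non-bichromatic. The one configuration that need not localize in this way is a $4$-path all of whose edges are $C$-edges, which occupies five consecutive fibers; for it only the four link colors matter, and these may straddle more than one junction.

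Finally, the main obstacle is exactly this long $C$-edge path, and I would handle it as in the proof of Lemma~\ref{lem:tiling}. Such a path fails to sit inside a single seam-window precisely when its four consecutive $C$-edges traverse at least a full period, i.e.\ when the relevant arc has length $3$ (the four edges overshoot a triangle fiber) or length $4$ (they close up into a $4$-cycle fiber); these are the only problematic lengths since arcs of length $\ge 5$ always localize and $m\ge 3$. In the former case the three consecutive edges within the triangle fiber are properly colored and so carry three distinct colors, excluding the alternating pattern; in the latter the four colors are those of a $4$-cycle fiber of $\sigma$ or $\sigma^*$, non-bichromatic by hypothesis. Assembling these cases shows $\tau$ is a star $k$-edge-coloring, whence $\chis{C_{pm+qn} \cartp H}\le k$. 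I expect the only genuine bookkeeping to lie in this last step — tracking which arc a straddling path engulfs when $m\in\set{3,4}$ (and, when $n=4$, the long arcs too) — precisely mirroring the corresponding case distinction in Lemma~\ref{lem:tiling}.
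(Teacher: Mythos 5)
Your proposal is correct and takes essentially the same route as the paper's proof: concatenate copies of $\sigma^*$ and $\sigma$ glued along seam edges of the common color, reduce every $4$-cycle and every $4$-path meeting an $H$-edge to a short window whose coloring matches a two-fold periodic tiling of $\sigma$ or of $\sigma^*$ (Lemma~\ref{lem:tiling} with $k=2$), and handle separately the $4$-paths consisting of four $C$-edges near arcs of length $3$ or $4$ via the distinct-colors-on-a-triangle-fiber and $4$-cycle-fiber arguments. The only deviations are cosmetic: you permit an arbitrary cyclic interleaving of the arcs where the paper fixes $p$ copies of $\sigma^*$ followed by $q$ copies of $\sigma$, and you spell out the length-$4$ case that the paper absorbs into its windows of $m+1$ consecutive fibers.
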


\begin{proof}
	Let $\sigma$ be a star $k$-edge-coloring of $C_n \cartp H$ which includes a star edge-coloring $\sigma^*$ of $C_m \cartp H$. 
	Let $C_n = v_1\dots v_nv_1$, $C_m=v_1\dots v_mv_1$, and $C_{pm+qn}=u_1\dots u_{pm+qn}u_1$. 
	Furthermore, we define an assignment $r:\{1,\dots,pm+qn\}\to\{1,\dots,n\}$ 
	such that, 
	if $t \le pm$, then $r(t) \le m$ and $t-r(t)$ is divisible by $m$, 
	and, 
	if $t > pm$, then $t-pm-r(t)$ is divisible by $n$.

	Now, similarly as in the proof of Lemma~\ref{lem:tiling}, 
	we define an edge-coloring $\tau$ of $C_{pm+qn} \cartp H$. 
	We combine $p$ copies of $\sigma^*$ followed by $q$ copies of $\sigma$. 
	More precisely, for $w_a,w_b \in H$,
	$$
		\tau((u_i,w_a)(u_j,w_b)) =
			\begin{cases}
				\sigma^*((v_{r(i)},w_a)(v_{r(i)},w_b)), &\mbox{ for }i=j\le pm\,;\\
				\sigma((v_{r(i)},w_a)(v_{r(i)},w_b)), &\mbox{ for }i=j>pm\,; \\
				\sigma^*((v_{r(i)},w_a)(v_{r(j)},w_a)), &\mbox{ for }i=j-1\le pm\,; \\
				\sigma((v_{r(i)},w_a)(v_{r(j)},w_a)), &\mbox{ for }i=j-1>pm\,.
			\end{cases}
	$$
	Note that, by the definition of $\sigma^*$, we have $\sigma^*((v_m,w)(v_1,w)) = \sigma((v_n,w)(v_1,w))$. 

	It remains to show that $\tau$ is a star edge-coloring.
	For an integer $s$, where $1 \le s \le pm+qn$, 
	let $G_s$ be the graph induced by the vertices of $m+1$ consecutive $H$-fibers $H_{u_{s+1}},\dots,H_{u_{s+m+1}}$ 
	(the indices $s+i$ are taken modulo $pm+qn$). 
	Since the coloring of each $G_s$ is a part of two consecutive copies of $\sigma^*$ or a part of two consecutive copies of $\sigma$,
	no $4$-path and no $4$-cycle in $G_s$ is bichromatic (using Lemma~\ref{lem:tiling} for $k=2$). 

	Finally, if a $4$-path is not contained in any $G_s$, then it contains $4$ edges of type $(v_i,w_a)(v_j,w_a)$ and $m = 3$.
	Moreover, such a $4$-path traverses the $H_{v_i}$-fibers, for $i \in \set{pm,\dots,pm+4}$ or $i \in \set{pm+qn,1,\dots,4}$.
	In both cases, colors of three consecutive edges of the 4-path correspond to colors of a $C_3$-fiber of $\sigma^*$ and therefore they are distinct.
	This completes the proof.
\end{proof}

We will use Lemma~\ref{lem:combine} to prove results for arbitrary lengths of cycles.
To do that, we will use the following result on Frobenious numbers~\cite{Syl82}.
\begin{theorem}[Sylvester, 1882]
	\label{thm:frob}
	Let positive integers $n$ and $m$ be relatively prime. 
	Then for every integer $k \ge (n-1)(m-1)$ there exist non-negative integers $\alpha$ and $\beta$ such that
	$$
		k = \alpha \cdot n + \beta \cdot m \,.
	$$	
\end{theorem}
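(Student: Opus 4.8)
The plan is to give an elementary number-theoretic argument organized around residue classes modulo $m$. The starting observation is that, because $n$ and $m$ are coprime, the $m$ multiples $0\cdot n, 1\cdot n, \dots, (m-1)\cdot n$ are pairwise incongruent modulo $m$: if $in \equiv jn \pmod{m}$ with $0 \le i,j \le m-1$, then $m \mid (i-j)n$, and coprimality forces $m \mid (i-j)$, hence $i=j$. Consequently these $m$ multiples form a complete residue system modulo $m$, so for the given integer $k$ there is a unique $\alpha \in \{0,1,\dots,m-1\}$ with $\alpha n \equiv k \pmod{m}$.

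With $\alpha$ fixed in this way, the difference $k - \alpha n$ is divisible by $m$, so I would set $\beta = (k-\alpha n)/m$, which is an integer and immediately yields $k = \alpha n + \beta m$. The only remaining point—and the crux of the argument—is to verify that $\beta \ge 0$, i.e.\ that $k \ge \alpha n$; this is exactly where the hypothesis $k \ge (n-1)(m-1)$ enters.

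To establish $\beta \ge 0$ I would argue by contradiction. If $k < \alpha n$, then since $k$ and $\alpha n$ are congruent modulo $m$, their difference is a positive multiple of $m$, whence $k \le \alpha n - m$. Using $\alpha \le m-1$ I would then bound
$$
	k \;\le\; \alpha n - m \;\le\; (m-1)n - m \;=\; (n-1)(m-1) - 1\,,
$$
which contradicts $k \ge (n-1)(m-1)$. Hence $k \ge \alpha n$, so $\beta \ge 0$, and the pair $(\alpha,\beta)$ is the desired non-negative representation.

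I do not anticipate a serious obstacle here; the one subtlety worth flagging is that one cannot simply bound $\alpha n$ by $(m-1)n$ and compare directly with $k$, since $(m-1)n$ strictly exceeds $(n-1)(m-1)$. Instead one must first exploit the congruence to replace $\alpha n$ by $\alpha n - m$ before bounding. The identity $(m-1)n - m = (n-1)(m-1)-1$ is precisely what makes the threshold $(n-1)(m-1)$ the right one, and it also shows (though the theorem does not ask for it) that this bound is sharp.
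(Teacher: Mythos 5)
Your argument is correct. The paper itself gives no proof of this statement: it is quoted as Sylvester's classical 1882 result with a citation, so there is no in-paper proof to compare against. Your residue-class argument is the standard self-contained one, and every step checks out: the multiples $0\cdot n, \dots, (m-1)n$ do form a complete residue system modulo $m$ by coprimality; the congruence $k \equiv \alpha n \pmod{m}$ lets you replace the naive bound $k \ge \alpha n$ by the strengthened $k \le \alpha n - m$ in the contradiction step; and the identity $(m-1)n - m = (n-1)(m-1)-1$ closes the argument exactly at the stated threshold. One minor caveat on your final aside: the identity by itself does not prove sharpness of the bound. Sharpness requires showing that $nm-n-m$ is \emph{not} representable, which needs a separate (short) argument --- e.g.\ if $nm-n-m = \alpha n + \beta m$ with $\alpha,\beta \ge 0$, then $\beta \equiv -1 \pmod{n}$ forces $\beta \ge n-1$ and hence $\alpha n \le -n < 0$, a contradiction. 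Since the theorem does not claim sharpness, this does not affect the validity of your proof.
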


We also recall the result of Dvo\v{r}\'{a}k, Mohar, and \v{S}\'{a}mal~\cite{DvoMohSam13} about star edge-coloring of subcubic graphs,
which we will use when considering prisms.
\begin{theorem}[Dvo\v{r}\'{a}k, Mohar, \v{S}\'{a}mal, 2013]
	\label{thm:cubic}
	~
	\begin{itemize}
		\item[(a)] If $G$ is a subcubic graph, then $\chis{G} \le 7$.
		\item[(b)] If $G$ is a simple cubic graph, then $\chis{G} \ge 4$, 
			and the equality holds if and only if $G$ covers the graph of the $3$-dimensional hypercube. 
	\end{itemize}
\end{theorem}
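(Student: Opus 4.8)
I would argue by induction on $|E(G)|$, taking a minimal counterexample $G$. Vertices of degree at most $2$ are easy to remove: an edge at a degree-$1$ vertex, or the two edges at a degree-$2$ vertex colored last, leave few constraints, so I would first reduce $G$ to a cubic graph having no such configuration. For the cubic case I would set up a discharging argument showing that $G$ must contain one of a short list of reducible configurations (short cycles, or pairs of nearby vertices), and for each of them exhibit an ordering of its edges, together with a bounded local recoloring if necessary, under which every edge still has a free color among the seven. The main obstacle here is exactly the reducibility check: when an edge $uv$ is colored last, its forbidden colors come not only from the (at most four) adjacent edges but also from the bichromatic-$P_4$ constraints, and a naive count can exceed six; making the bound $7$ work requires choosing the order and the recolorings so that these constraints overlap, which is the technical heart of the argument.

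\textbf{Part (b), lower bound.} Suppose a simple cubic $G$ admitted a star $3$-edge-coloring. Since the three edges at each vertex are pairwise adjacent, every vertex would see all three colors, so each color class is a perfect matching and the union of any two color classes is a spanning $2$-regular subgraph, i.e.\ a disjoint union of properly $2$-edge-colored (hence even) cycles, each of length at least $4$ as $G$ is simple. A $4$-cycle so colored is a bichromatic $C_4$, while any longer even cycle contains four consecutive edges colored $abab$, a bichromatic $P_4$; either way the star condition fails. Hence $\chis{G}\ge 4$.

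\textbf{Characterization, easy direction.} It is convenient to record that a proper edge-coloring is a star edge-coloring exactly when the union of every two color classes is a disjoint union of paths with at most three edges (a longer path or any cycle already produces a forbidden $abab$ configuration). For ``$G$ covers $Q_3$ $\Rightarrow \chis{G}=4$'', I would first exhibit one star $4$-edge-coloring of $Q_3$ (one exists in which every pair of color classes splits into paths with exactly three edges), and then pull it back along a covering map $\phi\colon G\to Q_3$, coloring each edge of $G$ by the color of its image. Local bijectivity of $\phi$ preserves properness, and a bichromatic $P_4$ or $C_4$ of $G$ maps to a non-backtracking walk of length four in $Q_3$; since $Q_3$ is bipartite such a walk is itself a $P_4$ or a $C_4$, so it would already be bichromatic in $Q_3$, a contradiction. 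Thus the pulled-back coloring is a star $4$-edge-coloring, and with the lower bound $\chis{G}=4$.

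\textbf{Characterization, hard direction.} Assume connected $G$ with $N$ vertices has a star $4$-edge-coloring. Each vertex misses exactly one color; writing $a_c$ for the number of vertices missing color $c$, the coloring gives $|M_c|=(N-a_c)/2$, while the reformulation forces $a_i+a_j\ge N/2$ for every pair $\{i,j\}$. Summing over all six pairs yields $3N$ on both sides, so equality holds throughout: every color is missed at exactly $N/4$ vertices and every pairwise union is a disjoint union of paths with \emph{exactly} three edges. I would then study the missing-color map $m\colon V(G)\to\{1,2,3,4\}$ and show it is a covering map onto $K_4$: if two adjacent vertices, or two neighbors of a common vertex, missed the same color, one checks that some pairwise union would contain a path with at least four edges, contradicting exactness. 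Finally, since $Q_3$ is the bipartite double cover of $K_4$, the covering $m\colon G\to K_4$ lifts to a covering $G\to Q_3$ precisely when $G$ is bipartite. The main obstacle is this last step: I must extract from the $4$-coloring a proper $2$-coloring of $V(G)$ (equivalently, exclude odd cycles), which I would attempt by using the length-$3$ path structure of a fixed pairwise union to define a candidate bipartition and then verifying that the remaining two color classes respect it.
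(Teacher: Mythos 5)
First, a remark on the comparison itself: the paper does not prove Theorem~\ref{thm:cubic} at all --- it is imported verbatim from~\cite{DvoMohSam13} and used as a black box (for the hexagonal-grid corollary and in Theorem~\ref{lem:P2Cn}) --- so your attempt can only be judged against the original argument and on its own merits. Much of what you write is sound. The lower bound in (b) is correct and complete: the three color classes would be perfect matchings, the union of any two is a spanning disjoint union of even cycles, and such a cycle is either a bichromatic $C_4$ or contains a bichromatic path with four edges. The pullback argument for ``$G$ covers $Q_3$ $\Rightarrow$ $\chis{G}=4$'' is also right: local bijectivity sends a bichromatic $P_4$ or $C_4$ to a non-backtracking walk of length four, and in the bipartite graph $Q_3$ such a walk is again a $P_4$ or a $C_4$. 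In the converse direction, your counting is correct ($a_i+a_j\ge N/2$ for each pair, summing to equality, so every union of two classes splits into paths with exactly three edges), and your claim that the missing-color map is a covering $G\to K_4$ does check out: two adjacent vertices missing the same color yield a one-edge component in a suitable union (so the violation there is a too-short path, not the ``at least four edges'' you state, though either way exactness fails), and two neighbors $u,w$ of $v$ missing the same color force $u$, $v$, $w$ all to have degree two in the union of the classes of the edges $vu$ and $vw$, giving a component with at least four edges.

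The genuine gaps are two. Part (a) is a program, not a proof: the configuration list, the discharging rules, and the reducibility arguments are never exhibited, and the difficulty you yourself point out (the forbidden-color count for the last edge can exceed six) is exactly what such a proof must overcome; nothing in your text overcomes it. It is also not the route of~\cite{DvoMohSam13}, whose proof is structural rather than a discharging argument. More importantly, the hard direction of (b) stops one step short of the theorem: covering $K_4$ is strictly weaker than covering $Q_3$. Indeed, $K_4$ itself is a simple cubic graph that covers $K_4$ (via the identity), yet $\chis{K_4}=5$ --- the paper recalls that the bound $3n(n-1)/(n+4)$ is attained for $n=4$ --- so $K_4$ does not cover $Q_3$. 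Hence the entire content of the characterization lies in proving that a cubic graph admitting a star $4$-edge-coloring has no odd cycles, and this is precisely the step you leave as ``I would attempt''. Until bipartiteness is extracted from the exact-three-edge-path structure, the lifting step ``$G$ covers $K_4$ and $G$ bipartite $\Rightarrow$ $G$ covers $Q_3$'' (which is itself correct) cannot be invoked, and the ``only if'' half of (b) remains unproved.
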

\noindent
Here, a graph $G$ is said to \textit{cover} a graph $H$ if there is a graph homomorphism from $G$ to $H$ that is locally bijective. 
In other words, there is a mapping $f : V(G) \rightarrow V(H)$
such that whenever $uv$ is an edge of $G$, the image $f(u)f(v)$ is an edge of $H$, 
and, for each vertex $v \in V(G)$, $f$ is a bijection between the neighbors of $v$ and the neighbors of $f(v)$.

At this point, we remark the following, somehow hidden, corollary of the above result.
Hexagonal grids are subcubic graphs and they cover the graph of the $3$-dimensional hypercube.
Thus:
\begin{corollary}
	For an infinite hexagonal grid $G$, we have
	$$
		\chis{G} = 4\,.
	$$
\end{corollary}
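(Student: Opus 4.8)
The plan is to invoke Theorem~\ref{thm:cubic}(b), for which it suffices to verify two things about the infinite hexagonal grid $G$: that it is simple and cubic (immediate, since every vertex of the honeycomb lattice has exactly three neighbours and there are no loops or multiple edges), and that $G$ \emph{covers} the $3$-dimensional hypercube $Q_3$. I would treat the lower and upper bounds separately, so that the infinite setting causes no trouble with the finitely stated theorem.

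For the lower bound I would argue directly that three colours never suffice. In any proper $3$-edge-colouring of a cubic graph each colour class is a perfect matching, so the union of two colour classes is $2$-regular; its components are therefore finite cycles or two-way infinite paths on which the two colours alternate. Since $G$ is simple, every such cycle has even length at least four, and hence in every component four consecutive edges form a bichromatic path or cycle of length four. Thus $\chis{G}\ge 4$.

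The heart of the argument is the covering map, which I would build from the three parallel classes of edges of the honeycomb lattice. Labelling these classes $1,2,3$, I define $f\colon V(G)\to V(Q_3)=\mathbb{Z}_2^3$ by fixing the image of one vertex and decreeing that crossing an edge of class $i$ flips the $i$-th coordinate. The only real issue is well-definedness, i.e.\ consistency of these flips around closed walks. As the cycle space of the planar hexagonal grid is generated by its hexagonal faces, it is enough to check a single face, and on each hexagon every class occurs exactly twice, so the accumulated flip is $e_1\oplus e_2\oplus e_3\oplus e_1\oplus e_2\oplus e_3=0$. Hence $f$ is well defined; and since the three edges at any vertex carry the three distinct classes, they are sent bijectively to the three edges at their image, so $f$ is locally bijective, i.e.\ a covering of $Q_3$.

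Finally, to transfer the upper bound to the infinite graph I would avoid relying on the finiteness in Theorem~\ref{thm:cubic} and instead pull back a star $4$-edge-colouring $c$ of $Q_3$ (which exists because $Q_3$ covers itself, whence $\chis{Q_3}=4$) to the colouring $c\circ f$ of $G$. Local bijectivity makes $c\circ f$ proper and sends any bichromatic path or cycle of length four in $G$ to a non-backtracking bichromatic walk of length four in $Q_3$; but under a star colouring the union of two colour classes of $Q_3$ has components that are paths with at most three edges and no cycles, so no such walk can exist. Therefore $c\circ f$ is a star colouring and $\chis{G}\le 4$, giving $\chis{G}=4$. The main obstacle is the consistency check for $f$, which the hexagonal faces settle at once; everything else is bookkeeping.
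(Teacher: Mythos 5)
Your proof is correct and takes essentially the same approach as the paper: the paper's entire argument is the observation that hexagonal grids are cubic and cover $Q_3$, followed by an appeal to Theorem~\ref{thm:cubic}(b). You simply make explicit what the paper leaves implicit --- the construction of the covering map via the three parallel edge classes, and the direct transfer of the lower bound and of the pulled-back star $4$-edge-coloring, which also neatly sidesteps any worry about applying a finitely-stated theorem to an infinite graph.
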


%

\section{Computer Computations and Algorithm}
\label{sec:alg}

For our computations, we used a simple backtracking algorithm,
which, together with some preprocessing, enabled us to compute exact lower bounds for some important cases on one hand,
and on the other hand, provided star edge-colorings with required properties for some graphs.
\begin{algorithm}[ht]
    \caption{Star edge-coloring algorithm}
    \label{alg:color}
    \begin{algorithmic}[1]
        \Procedure{StarColor}{$G,k,\mathcal{P}$} \Comment{Graph $G$, number of colors $k$, precolored edges $\mathcal{P}$}                        
            \State edgeOrder $\gets$ GetEdgeOrdering($G$, $\mathcal{P}$)
            \State edgeColors $\gets$ InitEdgeColors($\mathcal{P}$,edgeOrder)
            \State triedColors $\gets$ InitTriedColors(edgeOrder) \Comment{Dictionary of empty lists for all edges}
            \For{$i$ in $1..\textrm{edgeOrder.Count}$}\label{for:Edges} \Comment{Try to color edges according to the ordering}
				\State $e$ $\gets$ edgeOrder[$i$]
				\State isColored $\gets$ \texttt{false}
				\For{color $c$ in $\set{1..k}$ $\setminus$ triedColors[$e$]}\label{for:Colors}
					\State edgeColors[$e$] $\gets$ $c$
					\If{Conflict($c$, $G$, edgeColors)} \Comment{Check if a conflict occurs}
						\State edgeColors[$e$] $\gets$ $\emptyset$
					\Else
						\State add $c$ to triedColors[$e$]
						\State isColored $\gets$ \texttt{true}
						\State goto \ref{if:out}
					\EndIf
                \EndFor
                \If{!isColored and $i > 1$}\label{if:out} \Comment{If no color is found, continue if not at first edge}
                	\State edgeColors[edgeOrder[$i-1$]] $\gets$ $\emptyset$
                	\State $i = i-2$ \Comment{Step up, $-2$ handles automatic loop increment}
                \ElsIf{!isColored and $i == 1$}
                	\State \textbf{return} ``No coloring found''
                \ElsIf{isColored and $i == \textrm{edgeOrder.Count}$}
                	\State \textbf{return} edgeColors \Comment{A star edge-coloring is found}
                \EndIf
            \EndFor            
        \EndProcedure
    \end{algorithmic}
\end{algorithm}

The main coloring algorithm takes three input parameters: the graph to be colored, the number of colors, 
and a possible precoloring of some edges, in order to avoid testing some isomorphic partial colorings;
e.g., one may fix the colors on the edges incident to a vertex of maximum degree.
Note also that before calling the function \texttt{StarColor}, we first verify that the precoloring of the edges is a star edge-coloring.

Another important part of our algorithm is determining the order of edges (the function \texttt{GetEdgeOrdering}), in which it tries to color them.
We order the edges (ignoring the precolored edges) by the number of precolored neighbors (incident edges) 
and the number of neighbors appearing earlier in the ordering in a descending order.

The function \texttt{Conflict} checks if assigning a color to the current edge introduces a conflict,
namely, it checks if two adjacent edges receive the same color, and if a bichromatic $4$-path or $4$-cycle appears.
In some cases, we manually controlled the different cases of precolored edges. 
If we established some additional property of a required coloring, e.g., that no $4$-path can be colored with just three colors,
we included that in the procedure.

Finally, we also adopted the algorithm to output all possible colorings of a given graph, and in the case of symmetric graphs, e.g., cycles,
we eliminated isomorphic colorings. The remaining colorings were used to test if they can be extended to graphs on more vertices.

\newpage

\section{Cartesian products of paths and cycles}
\label{sec:main}


\subsection{Cartesian products of paths}
\label{sub:PnPn}

In a recent paper, Omoomi and Dastjerdi~\cite{OmoDas18} established tight bounds for the star chromatic index of two paths (see Table~\ref{tbl:squaregrid}). 
\begin{theorem}[Omoomi and Dastjerdi, $2019$]
	\label{thm:PnPn}
	For the graph $P_m \cartp P_n$, where $m$ and $n$ are integers with $2 \le m \le n$, we have
	$$
		\chis{P_m \cartp P_n} = 
			\begin{cases} 
				3, &\text{if $m = n = 2$;} \\
				4, &\text{if $m = 2$ and $n \ge 3$;} \\
				5, &\text{if $m = 3$ and $3 \le n \le 4$;} \\
				6, &\text{otherwise.}
			\end{cases}
	$$	
\end{theorem}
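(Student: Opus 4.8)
The plan is to split the statement into its upper-bound and lower-bound halves and, in each half, to exploit monotonicity of the star chromatic index under taking subgraphs: if $H$ is a subgraph of $G$, then restricting any star edge-coloring of $G$ to $H$ is again a star edge-coloring (deleting edges destroys no bichromatic $4$-path or $4$-cycle), so $\chis{H}\le\chis{G}$. Since $P_{m'}\cartp P_{n'}$ is a subgraph of $P_m\cartp P_n$ whenever $m'\le m$ and $n'\le n$, this reduces both directions to a small number of base cases.

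For the upper bounds I would first exhibit one explicit \emph{periodic} star $6$-edge-coloring of the grid, assigning colors to horizontal and vertical edges by a rule depending only on the residues of the two coordinates modulo a small period; restricting such a pattern to any $P_m\cartp P_n$ gives $\chis{P_m\cartp P_n}\le 6$ at once, which settles the generic (``otherwise'') case. For the exceptional cases I would give explicit small colorings: a $3$-edge-coloring of $C_4=P_2\cartp P_2$ (two colors necessarily yield a bichromatic $4$-cycle, so $3$ is optimal), a $4$-coloring of the ladder $P_2\cartp P_n$ built from a period-two pattern along the two rails and the rungs, and ad hoc $5$-colorings of $P_3\cartp P_3$ and $P_3\cartp P_4$.

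For the lower bounds I would again verify base cases and propagate upward. The inequality $\chis{P_2\cartp P_2}\ge 3$ is immediate. For $\chis{P_2\cartp P_n}\ge 4$ with $n\ge3$ it suffices to treat $P_2\cartp P_3$: assuming a star $3$-edge-coloring, each of the two squares must use all three colors (else a bichromatic $4$-cycle), and chasing the forced colors on the two rail-edges and the end rungs produces a bichromatic $4$-path through the two corners on one side, a contradiction. For the value $5$ I would show $\chis{P_3\cartp P_3}\ge5$ and $\chis{P_3\cartp P_4}\ge5$ by analyzing a central degree-$4$ vertex: assuming a star $4$-edge-coloring, its four incident edges exhaust all four colors, leaving no slack, and the $4$-cycles and $4$-paths meeting at that vertex cannot then all be made non-bichromatic. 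The crux is the jump to $6$: I would establish $\chis{P_4\cartp P_4}\ge6$ and $\chis{P_3\cartp P_5}\ge6$, since every remaining graph of the ``otherwise'' case contains one of these two as a subgraph.

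The hard part will be precisely this last step, because the boundary between the graphs needing $5$ and those needing $6$ is delicate. A direct hand proof that no star $5$-edge-coloring of $P_4\cartp P_4$ or $P_3\cartp P_5$ exists requires a long case analysis: assuming such a coloring, each interior degree-$4$ vertex uses four of the five colors and leaves a single spare, and one must fix the colors at one interior vertex, propagate the forced colors across the central band of degree-$4$ vertices, and show that a bichromatic $4$-path or $4$-cycle is ultimately unavoidable. I would either carry out this propagation carefully or, as is done elsewhere in the present paper, certify the non-existence by exhaustive computer search. A secondary technical point is verifying that the proposed periodic $6$-coloring genuinely avoids \emph{every} bichromatic $4$-path and $4$-cycle, not merely that it is proper; this amounts to checking all path and cycle types that fit inside one period of the pattern.
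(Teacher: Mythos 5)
The first thing to note is that the paper does not prove this statement at all: Theorem~\ref{thm:PnPn} is quoted as prior work of Omoomi and Dastjerdi~\cite{OmoDas18}, and the present paper only uses it (through Observation~\ref{obs:sub}, e.g.\ in Corollary~\ref{cor:lower}) as a source of lower bounds for cylinders and tori. So there is no in-paper proof to measure your attempt against; the only meaningful comparison is with the cited paper, and your outline is of the same general type as such proofs must be --- explicit colorings for the upper bounds, structural forcing or exhaustive analysis for the lower bounds.

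Judged on its own merits, your reduction skeleton is correct. Star edge-colorings do restrict to subgraphs, $P_{m'}\cartp P_{n'}$ embeds in $P_m\cartp P_n$ whenever $m'\le m$ and $n'\le n$, and you identify exactly the right base cases: $P_2\cartp P_3$ for the value $4$, $P_3\cartp P_3$ for the value $5$, and $P_3\cartp P_5$ together with $P_4\cartp P_4$ as the two minimal members of the ``otherwise'' class, so the propagation logic is sound. Your periodic upper-bound coloring also genuinely exists, and in fact the present paper's own machinery produces it independently of Theorem~\ref{thm:PnPn}: the star $6$-edge-coloring of $C_3\cartp C_3$ (Figure~\ref{fig:C3C3}) with Lemma~\ref{lem:tiling} gives $\chis{C_{3k}\cartp C_{3\ell}}=6$ (Corollary~\ref{cor:C3kC3k}), and every $P_m\cartp P_n$ sits inside such a torus. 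The genuine gap is that essentially all of the theorem's difficulty is concentrated in the steps you defer: the nonexistence of a star $4$-edge-coloring of $P_3\cartp P_3$ and, above all, of star $5$-edge-colorings of $P_3\cartp P_5$ and $P_4\cartp P_4$. For these your text is a statement of intent (``propagate the forced colors \dots\ or certify by exhaustive computer search''), not an argument; nothing written rules out that the propagation closes up consistently. Until that case analysis is actually executed or a verified search is reported, you have a correct reduction of the theorem to its hard kernel, not a proof of it.
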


\begin{table}[htb] 
\begin{center}
	\begin{tabular}{|c||c|c|c|c|}
	\hline
	$m \backslash n$ & 	2 	& 	3 	& 	4	&	$5^+$ \\
	\hline
	\hline
	2				& 	3	&	4 	&	4	&	4 \\
	\hline
	3				& 	4	&	5 	&	5	&	6 \\
	\hline
	4				& 	4	&	5 	&	6	&	6 \\
	\hline
	$5^+$			& 	4	&	6 	&	6	&	6 \\
	\hline
	\end{tabular}
	\end{center}
	\caption{The star chromatic index of the Cartesian products of two paths $\chis{P_m \cartp P_n}$}
	\label{tbl:squaregrid}
\end{table}

As a corollary, we establish the lower bound of $6$ colors for the Cartesian products, 
where one factor is a cycle and the other is a path of length at least $2$.

\begin{corollary}
	\label{cor:lower}
	For every pair of integers $m$ and $n$, where $m \ge 3$ and $n \ge 3$, we have
	$$
		\chis{C_m \cartp P_n} \ge 6.
	$$
\end{corollary}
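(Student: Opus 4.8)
The plan is to derive Corollary~\ref{cor:lower} directly from Theorem~\ref{thm:PnPn} by exhibiting a suitable subgraph of $C_m \cartp P_n$ on which a star edge-coloring must already use at least $6$ colors. The key observation is that $C_m \cartp P_n$ contains $P_m \cartp P_n$ as a subgraph (simply delete one $P_n$-fiber's worth of cycle-closing edges from the $C_m$-fibers). Since $m \ge 3$ and $n \ge 3$, Theorem~\ref{thm:PnPn} tells us that $\chis{P_m \cartp P_n} = 6$ in every such case (the value $6$ under ``otherwise,'' and also when $m=3$ with $n\ge 5$; the small cases $m=n=3$ and $m=3,n=4$ give only $5$, so those need separate care — see below). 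Because the restriction of any star edge-coloring of $C_m \cartp P_n$ to a subgraph is again a star edge-coloring of that subgraph, the chromatic index of the supergraph is at least that of the subgraph.

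**First I would** make the subgraph relationship precise: label the $C_m$-fibers so that deleting the edges $(v_m,w_j)(v_1,w_j)$ for all $j$ yields exactly $P_m \cartp P_n$, where $P_m = v_1\cdots v_m$ and $P_n = w_1\cdots w_n$. This is essentially already recorded in the text preceding Observation~\ref{obs:sub}, so the monotonicity $\chis{P_m \cartp P_n} \le \chis{C_m \cartp P_n}$ can be invoked immediately. **Then** I would split on the cases of Theorem~\ref{thm:PnPn}: whenever $\min(m,n) \ge 4$, or $m=3$ and $n \ge 5$ (and their transposes), the path product already needs $6$ colors, and monotonicity finishes the argument at once.

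**The main obstacle** is the handful of small cases where $\chis{P_m \cartp P_n} = 5 < 6$, namely $C_3 \cartp P_3$ and $C_3 \cartp P_4$ (up to the symmetry that swaps the two factors, bearing in mind only one factor is a cycle here). For these the subgraph bound from Theorem~\ref{thm:PnPn} is too weak, so I would handle them separately, most naturally by a direct argument or by invoking the computer-assisted lower bounds described in Section~\ref{sec:alg}: one verifies by the exhaustive coloring search (Algorithm~\ref{alg:color} run with $k=5$) that neither $C_3 \cartp P_3$ nor $C_3 \cartp P_4$ admits a star $5$-edge-coloring. The point is that closing the path $P_m$ into a cycle $C_3$ adds the wrap-around cycle-edges, which create additional $4$-cycles and $4$-paths (e.g., the three cycle-edges of a $C_3$-fiber together with $H$-edges) that the $5$-coloring of the open grid cannot accommodate. **Finally**, combining the monotonicity bound for the generic cases with the computer check (or a short hand argument) for the two exceptional small cases yields $\chis{C_m \cartp P_n} \ge 6$ for all $m,n \ge 3$, completing the proof.
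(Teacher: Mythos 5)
Your generic argument via subgraph monotonicity is sound as far as it goes, but your list of exceptional cases is incomplete, and this is a genuine gap. Within the range $m,n \ge 3$, the pairs for which $\chis{P_m \cartp P_n} = 5$ are $(3,3)$, $(3,4)$, \emph{and} $(4,3)$: by Theorem~\ref{thm:PnPn} (applied after swapping the factors, since the theorem assumes $m \le n$), $\chis{P_4 \cartp P_3} = 5$. Hence $C_4 \cartp P_3$ is a third exceptional case. Your generic case split ($\min(m,n)\ge 4$, or $m=3$ and $n\ge 5$, or transposes thereof) does not cover $(m,n)=(4,3)$, and the subgraph bound there yields only $\chis{C_4 \cartp P_3} \ge \chis{P_4 \cartp P_3} = 5$. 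The ``symmetry that swaps the two factors'' you invoke does not repair this: $C_4 \cartp P_3$ is not isomorphic to $C_3 \cartp P_4$, precisely because only one of the factors is a cycle. So, as written, your proof establishes nothing better than $5$ for $C_4 \cartp P_3$, and the corollary is not proved.

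The omission is fixable in two ways. You could simply add $C_4 \cartp P_3$ to your list of computer-verified cases (Algorithm~\ref{alg:color} with $k=5$), alongside $C_3 \cartp P_3$ and $C_3 \cartp P_4$; computer checks are acceptable methodology in this paper, although the paper's own proof of Corollary~\ref{cor:lower} avoids them entirely. Indeed, the paper handles $m=3$ by citing the known fact that the prism $C_3 \cartp P_2$ has star chromatic index $6$ \cite{LuzMocSot19} and is a subgraph of every $C_3 \cartp P_n$, which disposes of both of your listed small cases at once; and it handles $m=4$ by contradiction via Lemma~\ref{lem:tiling}: a star $5$-edge-coloring of $C_4 \cartp P_n$ would tile to a star $5$-edge-coloring of $C_{4\ell} \cartp P_n$, which contains $P_5 \cartp P_n$ with $\chis{P_5 \cartp P_n} = 6$, a contradiction. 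This tiling trick covers all $n \ge 3$ (including the troublesome $n=3$) uniformly, and it is exactly the idea your proposal is missing.
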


\begin{proof}
	We first note that the graph $C_3 \cartp P_2$ is one of the two known examples of simple bridgeless cubic graphs
	that have star chromatic index equal to $6$~\cite{LuzMocSot19}.
	Then, for $m = 3$, by Observation~\ref{obs:sub}, we have $\chis{C_3 \cartp P_n} \ge \chis{C_3 \cartp P_2} = 6$.

	If $m = 4$, we proceed by a contradiction. 
	Suppose that $\chis{C_4 \cartp P_n} \le 5$. 
	Then, by Lemma~\ref{lem:tiling}, $\chis{C_{4\ell} \cartp P_n} \le 5$, for any integer $\ell$,
	and hence also $\chis{P_5 \cartp P_{n}} \le 5$, a contradiction.
	Finally, if $m \ge 5$, then we have
	$\chis{C_m \cartp P_n} \ge \chis{P_5 \cartp P_3} = 6$ by Theorem~\ref{thm:PnPn} and Observation~\ref{obs:sub}.
\end{proof}


\subsection{Cartesian products of cycles}
\label{sub:CnCn}

Having the Cartesian products of paths resolved, the logical direction of research is consideration
of cylinders and toroidal grids, i.e., the Cartesian products of cycles and paths, and the Cartesian products of two cycles.
We begin by giving some results about the latter.

Corollary~\ref{cor:lower} implies that the Cartesian product of any two cycles will need at least $6$ 
colors for a star edge-coloring. 
On the other hand, as we will show in this subsection, 
the star chromatic index of the Cartesian product of two cycles is at most $7$.
We first investigate the Cartesian products of $C_3$ with another cycle.
\begin{theorem}
	\label{lem:C3Cn}
	For every integer $n$, where $n \ge 3$, we have
	$$
		\chis{C_3 \cartp C_n} = 
			\begin{cases} 
				6, &\text{if $n = 3k$,} \\
				7, &\text{otherwise}
			\end{cases}
	$$
\end{theorem}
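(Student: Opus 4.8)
The plan is to establish the two bounds separately, handling the lower bound $\chis{C_3 \cartp C_n} \ge 6$ for all $n$ and the sharper lower bound $\chis{C_3 \cartp C_n} \ge 7$ when $n \not\equiv 0 \pmod 3$, and then matching upper bounds. The lower bound of $6$ is immediate: by Observation~\ref{obs:sub} we have $\chis{C_3 \cartp C_n} \ge \chis{C_3 \cartp P_n} \ge 6$ from Corollary~\ref{cor:lower}. So the substance lies in the two cases of the statement.

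For the case $n = 3k$, I would first exhibit an explicit star $6$-edge-coloring of the base graph $C_3 \cartp C_3$ (most easily found by the backtracking algorithm of Section~\ref{sec:alg}), and then verify that this coloring \emph{includes} a star edge-coloring of $C_3 \cartp C_3$ in the technical sense defined before Lemma~\ref{lem:combine}; equivalently, I would produce a coloring of $C_3 \cartp C_m$ for a suitable small $m$ divisible by $3$ and apply Lemma~\ref{lem:tiling} with base $m = 3$ to tile it up to any $C_{3k} \cartp C_3$. Concretely, setting $H = C_3$ and using $\chis{C_{3k} \cartp C_3} \le \chis{C_3 \cartp C_3} = 6$ via Lemma~\ref{lem:tiling} gives the upper bound, while the lower bound $6$ was noted above; hence equality.

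For $n \not\equiv 0 \pmod 3$, I expect the real work to be the lower bound $\chis{C_3 \cartp C_n} \ge 7$, i.e.\ proving that $6$ colors cannot suffice. The natural strategy is a structural/counting argument on the three $C_n$-fibers: in any proper $6$-edge-coloring, the $C_3$-fibers (triangles) each use three distinct colors, and the star condition forbidding bichromatic $4$-paths severely constrains how colors on consecutive triangles and on the connecting $C_n$-edges can interact. I would argue that a valid star $6$-coloring forces a periodic pattern along the $C_n$ direction with period $3$ (reflecting the three colors on each triangle cyclically), so that closing up the cycle consistently requires $3 \mid n$; when $3 \nmid n$ the forced pattern fails to match around the cycle, yielding a contradiction. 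This parity-of-pattern obstruction is the heart of the proof and is precisely where computer verification of small cases (establishing that no $6$-coloring of $C_3 \cartp C_n$ exists for the smallest offending $n$, e.g.\ $n \in \set{4,5}$) would anchor the inductive or periodicity argument.

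The matching upper bound $\chis{C_3 \cartp C_n} \le 7$ for $n \not\equiv 0 \pmod 3$ I would obtain by combining small building blocks: find, by computer, star $7$-edge-colorings of $C_3 \cartp C_4$ and $C_3 \cartp C_5$ that each \emph{include} a star edge-coloring of a shorter product, and then invoke Lemma~\ref{lem:combine} together with the Frobenius-number result (Theorem~\ref{thm:frob}) to cover all sufficiently large $n$ of each residue class modulo $3$, checking the finitely many remaining small $n$ directly. The main obstacle, as noted, is the periodicity lower bound for $3 \nmid n$; the upper bounds are essentially a bookkeeping exercise in assembling the right base colorings and applying Lemmas~\ref{lem:tiling} and~\ref{lem:combine}.
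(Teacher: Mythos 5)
Your skeleton matches the paper's: the lower bound of $6$ from Corollary~\ref{cor:lower}, the case $3 \mid n$ by tiling a star $6$-edge-coloring of $C_3 \cartp C_3$ via Lemma~\ref{lem:tiling}, and the upper bound of $7$ by combining base colorings through Lemma~\ref{lem:combine} and Theorem~\ref{thm:frob} (the paper uses colorings of $C_3 \cartp C_7$ and $C_3 \cartp C_8$ that include a star $6$-edge-coloring of $C_3 \cartp C_3$, plus explicit colorings for $n \in \set{4,5}$; your choice of smaller bases would serve the same purpose provided the required ``including'' colorings actually exist, which itself needs a computer check).

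The genuine gap is in the heart of the theorem, the lower bound $\chis{C_3 \cartp C_n} \ge 7$ when $3 \nmid n$. You correctly guess that the proof should exploit a forced period-$3$ structure, but you never say how that periodicity would be established, and the computational anchor you propose cannot do the job: verifying that $C_3 \cartp C_4$ and $C_3 \cartp C_5$ admit no star $6$-edge-coloring says nothing about $n = 7, 8, 10, 11, \dots$, because $C_3 \cartp C_4$ is not a subgraph of $C_3 \cartp C_7$ and there is no induction passing from shorter closed cycles to longer ones. What the paper verifies by computer is instead a \emph{local rigidity} statement: $C_3 \cartp P_3$ has a \emph{unique} star $6$-edge-coloring up to permutation of colors, in which three colors appear on the triangles and the other three on the path edges. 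Since every three consecutive $C_3$-fibers of $C_3 \cartp C_n$ induce a copy of $C_3 \cartp P_3$, this uniqueness propagates around the cycle, forces the coloring to repeat with period $3$, and hence a star $6$-edge-coloring can close up consistently only when $3 \mid n$. That finite, local fact --- uniqueness on a path-window subgraph that sits inside every $C_3 \cartp C_n$ --- is the missing ingredient your sketch needs; without it (or an equivalent hand-made case analysis, which would be long), the claimed periodicity is only an assertion.
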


\begin{proof}
	By Corollary~\ref{cor:lower}, $\chis{C_3 \cartp C_n} \ge 6$.
	Now suppose that $n = 3k$ for some integer $k \ge 1$.
	If $k=1$, then there is a star $6$-edge-coloring of $C_3 \cartp C_3$ 
	(one is depicted in Figure~\ref{fig:C3C3}).
	Next, by Lemma~\ref{lem:tiling}, we have $\chis{C_3 \cartp C_n} = 6$.
	
	Using Algorithm~\ref{alg:color}, we infer that the Cartesian product $C_3 \cartp P_3$ 
	has only one star $6$-edge-coloring up to a permutation of colors.
	Namely, three colors, say $0$, $1$, and $2$, appear on $C_3$-fibers,
	and the colors $4$, $5$, and $6$ on the $P_3$-fibers.
	Since $C_3 \cartp P_3$ is a subgraph of every graph $C_3 \cartp C_n$, 
	it follows that such Cartesian products admit a star $6$-edge-coloring only
	when $n$ is divisible by $3$.
	
	Therefore, if $n \neq 3k$ for every integer $k$, then $\chis{C_3 \cartp C_n} \ge 7$.
	In Figures~\ref{fig:C3C7} and~\ref{fig:C3C8}, a star $7$-edge-coloring of
	$C_3 \cartp C_7$ and $C_3 \cartp C_8$, respectively, is depicted.
	Observe that, in both colorings, a star $6$-edge-coloring of $C_3 \cartp C_3$ is included.
	Hence, by Lemma~\ref{lem:combine} and Theorem~\ref{thm:frob}, 
	we have $\chis{C_3 \cartp C_n} = 7$ for every $n$, $n \ge 7$, not divisible by $3$.
	The remaining two cases, namely $n=4$ and $n=5$, are depicted in Figures~\ref{fig:C3C4} and~\ref{fig:C3C5}, respectively.
\end{proof}

Similarly as in the proof of Theorem~\ref{lem:C3Cn}, 
we can use Lemma~\ref{lem:tiling} (twice)
to extend the star edge-coloring of $C_3 \cartp C_3$ to products of cycles of lengths divisible by $3$.
\begin{corollary}
	\label{cor:C3kC3k}
	For every pair of positive integers $k$ and $\ell$, we have
	$$
		\chis{C_{3k} \cartp C_{3\ell}} = 6\,.
	$$
\end{corollary}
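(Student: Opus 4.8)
The plan is to establish the lower bound and upper bound separately, as is standard. For the lower bound, I would simply invoke Corollary~\ref{cor:lower}: since $3k \ge 3$ and $3\ell \ge 3$, we immediately get $\chis{C_{3k} \cartp C_{3\ell}} \ge 6$, so it suffices to construct a star $6$-edge-coloring.

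For the upper bound, the key idea (hinted at in the corollary's preamble) is to apply Lemma~\ref{lem:tiling} twice, starting from the known star $6$-edge-coloring of $C_3 \cartp C_3$ depicted in Figure~\ref{fig:C3C3}. First I would apply the lemma with $H = C_3$ and the cycle-factor scaled from $C_3$ to $C_{3k}$: taking $m = 3$, $k$ as the scaling factor, and $H = C_3$, Lemma~\ref{lem:tiling} gives
$$
	\chis{C_{3k} \cartp C_3} \le \chis{C_3 \cartp C_3} = 6\,.
$$
Then I would apply Lemma~\ref{lem:tiling} a second time, now with the roles arranged so that the other cycle-factor is scaled from $C_3$ to $C_{3\ell}$. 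Here one must be slightly careful about the orientation of the factors, since Lemma~\ref{lem:tiling} is stated as $\chis{C_{k \cdot m} \cartp H} \le \chis{C_m \cartp H}$, i.e., it scales the \emph{first} factor. Using the commutativity of the Cartesian product (so that $C_{3k} \cartp C_3 \cong C_3 \cartp C_{3k}$), I would rewrite the graph with the $C_3$ I wish to scale appearing as the first factor and $H = C_{3k}$ as the second, obtaining
$$
	\chis{C_{3\ell} \cartp C_{3k}} \le \chis{C_3 \cartp C_{3k}} = \chis{C_{3k} \cartp C_3} \le 6\,.
$$
Combining the two bounds yields $\chis{C_{3k} \cartp C_{3\ell}} \le 6$, and together with the lower bound the result follows.

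The main (and essentially only) obstacle is purely bookkeeping: ensuring that each application of Lemma~\ref{lem:tiling} matches its stated form, where the scaled cycle must be the first factor while the second factor $H$ is an arbitrary graph. The commutativity isomorphism $G \cartp H \cong H \cartp G$ preserves star edge-colorings (it merely relabels $G$-edges and $H$-edges), so invoking it is harmless and lets both cycle-lengths be scaled in turn. No new combinatorial argument is needed beyond the two lemma applications and the lower bound already in hand.
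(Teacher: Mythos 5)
Your proof is correct and takes essentially the same route as the paper, which likewise obtains the lower bound from Corollary~\ref{cor:lower} (via Observation~\ref{obs:sub}) and the upper bound by applying Lemma~\ref{lem:tiling} twice to the star $6$-edge-coloring of $C_3 \cartp C_3$ from Figure~\ref{fig:C3C3}. Your explicit remark about commuting the factors so that the scaled cycle sits in the first position is a bookkeeping detail the paper leaves implicit, but it changes nothing substantive.
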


We proceed with a result about the Cartesian products of $C_4$ with another cycle.
\begin{theorem}
	\label{lem:C4kC2n}
	For every pair of positive integers $k$ and $\ell$, where $k \ge 1$ and $\ell \ge 2$, we have
	$$
		\chis{C_{4k} \cartp C_{2\ell}} = 6\,.
	$$
\end{theorem}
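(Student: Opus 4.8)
The lower bound is immediate. Since $4k \ge 4 \ge 3$ and $2\ell \ge 4 \ge 3$, Observation~\ref{obs:sub} together with Corollary~\ref{cor:lower} gives
$\chis{C_{4k} \cartp C_{2\ell}} \ge \chis{C_{4k} \cartp P_{2\ell}} \ge 6$.
It therefore remains to exhibit a star $6$-edge-coloring of every $C_{4k} \cartp C_{2\ell}$ with $k \ge 1$ and $\ell \ge 2$.

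The plan is to reduce everything to the single case in which the first factor is $C_4$, and to generate all admissible lengths of the second factor from two small base colorings. The key arithmetic observation is that every even integer $2\ell \ge 4$ can be written as $2\ell = 4p + 6q$ with $p,q$ non-negative integers: indeed $2\ell = 2(2p+3q)$, and the numerical semigroup $\langle 2,3\rangle$ contains every integer $\ge 2$. (Note that Theorem~\ref{thm:frob} is not needed here, since $4$ and $6$ are not coprime and the representability is elementary.) Consequently, if I can produce a star $6$-edge-coloring of $C_4 \cartp C_6$ that \emph{includes} a star $6$-edge-coloring of $C_4 \cartp C_4$, in the symmetric sense defined before Lemma~\ref{lem:combine} with $H = C_4$ fixed and the second factor varying, then applying that lemma with $m=4$ and $n=6$ yields $\chis{C_4 \cartp C_{4p+6q}} \le 6$, that is, $\chis{C_4 \cartp C_{2\ell}} \le 6$ for every $\ell \ge 2$.

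It then remains to lift the bound from $C_4$ to $C_{4k}$ in the first coordinate. For $k=1$ this is exactly what was just proved, and for $k \ge 2$ Lemma~\ref{lem:tiling} applied with $m=4$ and $H = C_{2\ell}$ gives $\chis{C_{4k} \cartp C_{2\ell}} \le \chis{C_4 \cartp C_{2\ell}} \le 6$. Together with the lower bound this establishes $\chis{C_{4k} \cartp C_{2\ell}} = 6$ throughout.

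The heart of the argument, and the step I expect to be the main obstacle, is constructing the single base coloring with the required inclusion property. I would first obtain (via the backtracking search of Algorithm~\ref{alg:color}, or by hand) a star $6$-edge-coloring of $C_4 \cartp C_4$, and then extend it to a star $6$-edge-coloring of $C_4 \cartp C_6$ so that, upon restricting to four consecutive fibers in the $C_6$-direction and adding the recolored wrap-around edges prescribed by the inclusion definition, one recovers exactly the $C_4 \cartp C_4$ coloring. Verifying that the combined object avoids every bichromatic $4$-path and $4$-cycle — both inside the six fibers and across the identification edges used by Lemma~\ref{lem:combine} — is the delicate point; I would discharge it by displaying the two colorings explicitly as figures and checking the finitely many relevant paths and cycles, with computer assistance where convenient.
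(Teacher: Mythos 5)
Your overall architecture is sound and parallels the paper's: the lower bound via Observation~\ref{obs:sub} and Corollary~\ref{cor:lower} is correct, the lift from $C_4$ to $C_{4k}$ via Lemma~\ref{lem:tiling} is correct, and your arithmetic point that every even $2\ell\ge 4$ equals $4p+6q$ (so that the coprimality hypothesis of Theorem~\ref{thm:frob} is not needed for your pair) is also correct. But the proof has a genuine gap at exactly the step you flag as ``the main obstacle'': everything rests on the existence of a star $6$-edge-coloring of $C_4\cartp C_6$ that \emph{includes} a star $6$-edge-coloring of $C_4\cartp C_4$, and you never produce such a coloring --- you only describe a search you expect to succeed. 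This cannot be waved through as routine verification: inclusion is a strong rigidity condition on small colorings, and this kind of rigidity is a real phenomenon here (compare the proof of Theorem~\ref{lem:C3Cn}, where the uniqueness of the star $6$-edge-coloring of $C_3\cartp P_3$ is precisely what forces $7$ colors for most lengths). Nothing in your write-up, and nothing in the paper, establishes that your proposed base object exists.

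The paper's actual proof avoids this object altogether. It exhibits (Figure~\ref{fig:C4C4C6}) a star $6$-edge-coloring $\sigma_{10}$ of $C_4\cartp C_{10}$ that includes both a star $6$-edge-coloring of $C_4\cartp C_4$ and a star $6$-edge-coloring of $C_4\cartp C_6$. The pair $(4,10)$ --- equivalently $(2,5)$ after halving, which is where Theorem~\ref{thm:frob} does enter --- already generates every even length at least $8$ via Lemma~\ref{lem:combine}, while the lengths $4$ and $6$ come directly from the two included colorings; Lemma~\ref{lem:tiling} then finishes, exactly as in your last step. The fact that the authors, running the same kind of computer search you propose, based the argument on the larger torus $C_4\cartp C_{10}$ rather than on the $C_4\cartp C_6$ base you want is at least circumstantial evidence that your base coloring may not exist; in any case its existence is an unproved claim on which your entire upper bound depends. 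To close the gap you must either exhibit and verify such a coloring of $C_4\cartp C_6$, or retreat to a larger base carrying the needed inclusions --- which is precisely what the paper does.
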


\begin{proof}
	We use the star $6$-edge-coloring $\sigma_{10}$ of $C_4 \cartp C_{10}$ depicted in Figure~\ref{fig:C4C4C6}.
	Note that $\sigma_{10}$ includes a star $6$-edge-coloring $C_4 \cartp C_4$, 
	and a star $6$-edge-coloring $C_4 \cartp C_6$.
	Therefore, by Lemma~\ref{lem:combine} and Theorem~\ref{thm:frob}, 
	we have $\chis{C_{4} \cartp C_{2\ell}} = 6$ for every integer $\ell \ge 4$.
	Finally, we use Lemma~\ref{lem:tiling} to infer $\chis{C_{4k} \cartp C_{2\ell}} = 6$ for every integer $k$.
\end{proof}

\begin{proposition}
	\label{lem:C4Cm}
	For $n \in \set{5,7,9,11}$, we have
	$$
		\chis{C_{4} \cartp C_{n}} = 7\,.
	$$
\end{proposition}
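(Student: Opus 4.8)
The plan is to prove $\chis{C_4 \cartp C_n} \ge 7$ and the matching $\chis{C_4 \cartp C_n} \le 7$ separately for each $n \in \{5,7,9,11\}$. The inequality $\chis{C_4 \cartp C_n} \ge 6$ is immediate from Corollary~\ref{cor:lower}, so the entire content is to rule out a star $6$-edge-coloring and to exhibit a star $7$-edge-coloring for each of the four graphs.

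For the upper bound I would simply produce one explicit star $7$-edge-coloring of each graph, obtained by running Algorithm~\ref{alg:color} with $k=7$. These are small $4$-regular graphs (on $20$, $28$, $36$, and $44$ vertices, respectively), so the backtracking search terminates quickly, and a single coloring per graph, displayed in a figure, settles $\chis{C_4 \cartp C_n} \le 7$. Since only the four stated values of $n$ are needed, no general construction is required at this point.

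The main work, and the expected obstacle, is the lower bound $\chis{C_4 \cartp C_n} \ge 7$, i.e.\ showing that none of these graphs admits a star $6$-edge-coloring. My first attempt would be structural, mirroring the uniqueness argument used for $C_3 \cartp P_3$ in the proof of Theorem~\ref{lem:C3Cn}. Each $C_4$-fiber is itself a $4$-cycle, so a proper $2$-edge-coloring of it would already be a forbidden bichromatic $4$-cycle; hence every $C_4$-fiber must use at least three colors. I would try to enumerate the few admissible color patterns on a single fiber and then track how the coloring of one fiber constrains the coloring of the next as one moves around the $C_n$-direction. The target would be a parity statement: the coloring ``state'' on consecutive fibers can only return to a compatible configuration after an even number of steps (consistent with Theorem~\ref{lem:C4kC2n}, where $6$ colors suffice precisely for even cycle lengths), so an odd cycle length is impossible.

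The difficulty is that making this state-transition analysis rigorous requires exactly the large case distinction the paper deliberately avoids, and for fixed small $n$ there is a cleaner route: I would invoke Algorithm~\ref{alg:color} with $k=6$ on each of $C_4 \cartp C_5$, $C_4 \cartp C_7$, $C_4 \cartp C_9$, and $C_4 \cartp C_{11}$. Because the search is exhaustive, its failure to return a coloring \emph{is} a proof that no star $6$-edge-coloring exists; fixing the colors on the edges around a single vertex (as the preprocessing permits) keeps the four searches feasible. Combining the computed $7$-colorings, this exhaustive non-existence for $6$ colors, and the bound from Corollary~\ref{cor:lower} yields $\chis{C_4 \cartp C_n} = 7$ for each $n \in \{5,7,9,11\}$. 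The only genuinely hard point is the lower bound, which I do not expect to close by a short hand argument; the honest proof rests on the exhaustive search.
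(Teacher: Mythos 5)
Your proposal follows essentially the same route as the paper: Algorithm~\ref{alg:color} is used both to certify the non-existence of a star $6$-edge-coloring (the lower bound) and to produce explicit star $7$-edge-colorings (the upper bound), and for $n \in \set{5,7,9}$ this is exactly what the paper does. The one place where the paper deviates, and where your plan is at risk, is $n = 11$: the authors evidently did not run a direct exhaustive $6$-color search on $C_4 \cartp C_{11}$ ($44$ vertices, $88$ edges). Instead they split the computation in two steps: first they use the algorithm to show that in any star $6$-edge-coloring, every $4$-path inside each $C_{11}$-fiber must receive four distinct colors (any fiber coloring repeating a color on a $4$-path cannot be extended), and only then do they search exhaustively over the colorings satisfying this restriction, finding none. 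Your fallback of fixing the colors around a single vertex is a much weaker symmetry reduction and may well not bring the $n=11$ search within reach; this is a feasibility gap rather than a logical one (a completed exhaustive search that fails is a valid non-existence proof), but it is precisely the obstacle the paper had to engineer around. A second, smaller difference concerns the upper bound at $n=11$: the paper does not exhibit a new coloring but combines the star $7$-edge-coloring of $C_4 \cartp C_7$ of Figure~\ref{fig:C4C7}, which includes a coloring of $C_4 \cartp C_4$, to cover $11 = 4 + 7$ (the text cites Lemma~\ref{lem:tiling}, though this composition is really an application of Lemma~\ref{lem:combine}); your plan of searching for a coloring of $C_4 \cartp C_{11}$ directly would also work, and is if anything more self-contained.
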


\begin{proof}
	Using Algorithm~\ref{alg:color}, we established that $\chis{C_{4} \cartp C_{n}} > 6$ for every $n \in \set{5,7,9}$.
	The bounds $\chis{C_{4} \cartp C_{n}} = 7$ follow from the star $7$-edge-colorings depicted
	in Figures~\ref{fig:C4C5}, \ref{fig:C4C7}, and~\ref{fig:C4C9}.
	
	In the case of $n = 11$, we split the computation in two steps. 
	First, using Algorithm~\ref{alg:color}, we determined that if the edges of some $C_{11}$-fiber 
	are colored in such a way that a same color appears twice on some $4$-path,
	then the coloring cannot be extended to a star $6$-edge-coloring of $C_4 \cartp C_{11}$.
	In the second step, the algorithm checked only the colorings in which every $4$-path in each $C_{11}$-fiber 
	had four colors on its edges.
	It turned out that such a coloring does not exist. 
	Therefore,
	$\chis{C_{4} \cartp C_{11}} = 7$ by the star edge-coloring depicted in Figure~\ref{fig:C4C7} and Lemma~\ref{lem:tiling}.
\end{proof}

\begin{theorem}
	\label{lem:C4Cn}
	For any odd integer $n$, where $n \ge 13$, we have
	$$
		\chis{C_{4} \cartp C_{n}} \le 7\,.
	$$
\end{theorem}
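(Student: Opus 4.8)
The plan is to bootstrap from the finitely many explicit star $7$-edge-colorings of small products up to all odd $n \ge 13$, using Lemma~\ref{lem:combine} together with the Frobenius bound of Theorem~\ref{thm:frob}; this is exactly the scheme already used for $C_3 \cartp C_n$ in Theorem~\ref{lem:C3Cn}. Concretely, if I can exhibit one star $7$-edge-coloring of $C_4 \cartp C_{n'}$ that includes a star edge-coloring of $C_4 \cartp C_m$ for some $m < n'$, then Lemma~\ref{lem:combine} gives $\chis{C_4 \cartp C_{pm+qn'}} \le 7$ for all non-negative $p,q$, and once $\gcd(m,n')=1$ Theorem~\ref{thm:frob} makes every integer $\ge (m-1)(n'-1)$ attainable. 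So the whole problem reduces to choosing include relations among the colorings of $C_4 \cartp C_5$, $C_4 \cartp C_7$, $C_4 \cartp C_9$ (Figures~\ref{fig:C4C5},~\ref{fig:C4C7},~\ref{fig:C4C9}) whose generated numerical semigroups cover every odd $n \ge 13$.

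First I would look for the cheapest possible covering. The most economical would be a single relation in which the coloring of $C_4 \cartp C_5$ includes a star edge-coloring of $C_4 \cartp C_4$: taking $m=4$, $n'=5$ in Lemma~\ref{lem:combine} (note that the lemma's seam analysis is vacuous for $m \ge 4$, since any $4$-path running in the cycle direction already lies inside five consecutive fibers) yields $\chis{C_4 \cartp C_N} \le 7$ for all $N$ in $\langle 4,5 \rangle$, and since $(4-1)(5-1)=12$ this is every $N \ge 12$, in particular every odd $n \ge 13$. Should that inclusion fail for the explicit coloring, I would fall back on odd generators: verifying that $C_4 \cartp C_7$ includes $C_4 \cartp C_5$ gives $\langle 5,7 \rangle$, which by Theorem~\ref{thm:frob} already supplies all $n \ge 24$ and, among the small odd values, $n \in \{15,17,19,21\}$. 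The only odd numbers in $\{13,\dots,23\}$ missing from $\langle 5,7 \rangle$ are $13$ and $23$. For $23 = 5 + 2\cdot 9$ I would add the relation that $C_4 \cartp C_9$ includes $C_4 \cartp C_5$ (the semigroup $\langle 5,9 \rangle$), and for $13$ I would either use that $C_4 \cartp C_9$ includes $C_4 \cartp C_4$ (so $13 = 4+9 \in \langle 4,9 \rangle$) or simply produce a direct star $7$-edge-coloring of $C_4 \cartp C_{13}$ with Algorithm~\ref{alg:color}.

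The genuinely load-bearing step, and the main obstacle, is the verification of the inclusions themselves: for a given explicit coloring one must confirm that gluing the $m$ retained fibers to one another through the borrowed wraparound edges $e_w$ (colored by $\sigma((v_1,w)(v_{n'},w))$) creates no two adjacent edges of the same color and no bichromatic $4$-path or $4$-cycle across the seam. This is precisely the bookkeeping performed by the \texttt{Conflict} routine of Algorithm~\ref{alg:color}, so in practice I would settle each candidate inclusion by that check rather than by hand. Once the inclusions are confirmed, the passage to arbitrary odd $n \ge 13$ is entirely mechanical: combine Lemma~\ref{lem:combine} with the relevant semigroup membership and invoke Theorem~\ref{thm:frob} for the large cases. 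The lower-order exceptional lengths (here $13$, and possibly $23$) are where a little care is needed, exactly as the cases $n=4,5$ required separate figures in the proof of Theorem~\ref{lem:C3Cn}.
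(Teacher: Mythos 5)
Your proposal follows essentially the same route as the paper: the paper exhibits a star $7$-edge-coloring of $C_4 \cartp C_7$ that includes a star edge-coloring of $C_4 \cartp C_4$, applies Lemma~\ref{lem:combine} and Theorem~\ref{thm:frob} to the semigroup $\langle 4,7\rangle$ to cover all odd $n \ge 19$, and settles $n \in \{13,15,17\}$ via the inclusion of $C_4 \cartp C_4$ in the coloring of $C_4 \cartp C_9$ (giving $13=4+9$ and $17=8+9$) together with the coloring of $C_4 \cartp C_5$ (for $15$). Your plan differs only in which computer-verified inclusions it bets on (e.g., $\langle 4,5\rangle$, or $\langle 5,7\rangle$ plus $\langle 5,9\rangle$, instead of $\langle 4,7\rangle$ plus $\langle 4,9\rangle$), which is an implementation detail rather than a different method, and your fallbacks (including a direct coloring of $C_4 \cartp C_{13}$) keep the scheme sound.
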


\begin{proof}
	In Figure~\ref{fig:C4C7}, we present a star $7$-edge-coloring of $C_4 \cartp C_7$ with a star $7$-edge-coloring of $C_4 \cartp C_4$ included.
	Thus, by Lemma~\ref{lem:combine} and Theorem~\ref{thm:frob}, we infer that $\chis{C_{4} \cartp C_{n}} \le 7$ for every odd $n$, where $n > 18$.
	Colorings for $n \in \set{13,15,17}$ can be obtained by using Lemma~\ref{lem:combine} and the colorings 
	depicted in Figures~\ref{fig:C4C5} (for $n = 15$) and~\ref{fig:C4C9} (for $n \in \set{13,17}$).
\end{proof}

\begin{theorem}
	\label{lem:C5Cn}
	For every integer $n$, where $n \ge 3$, we have
	$$
		\chis{C_5 \cartp C_{n}} = 7\,.
	$$
\end{theorem}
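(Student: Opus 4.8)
I would prove the two inequalities $\chis{C_5 \cartp C_n} \ge 7$ and $\chis{C_5 \cartp C_n} \le 7$ separately. The upper bound is an instance of the machinery already developed — explicit base colorings spread by Lemma~\ref{lem:combine}, Lemma~\ref{lem:tiling}, and Theorem~\ref{thm:frob} — whereas the lower bound, which must hold for \emph{every} $n$ with no exception, is the genuinely new point and the heart of the statement.

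For the lower bound I would first clear the two smallest lengths from results already in hand: since $C_5 \cartp C_3 \cong C_3 \cartp C_5$ and $3 \nmid 5$, Theorem~\ref{lem:C3Cn} gives $\chis{C_5 \cartp C_3} = 7$, and since $C_5 \cartp C_4 \cong C_4 \cartp C_5$, Proposition~\ref{lem:C4Cm} gives $\chis{C_5 \cartp C_4} = 7$. The crucial observation is that, because the star chromatic index is monotone under subgraphs and $P_k \subseteq C_n$ whenever $n \ge k$, the product $C_5 \cartp P_k$ is a subgraph of $C_5 \cartp C_n$ for every $n \ge k$; hence a \emph{single} finite certificate $\chis{C_5 \cartp P_k} \ge 7$ would yield $\chis{C_5 \cartp C_n} \ge 7$ for all $n \ge k$. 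I would therefore run Algorithm~\ref{alg:color} to locate the smallest $k$ with $\chis{C_5 \cartp P_k} = 7$ and, should $k$ exceed $5$, finish the finitely many remaining lengths $5 \le n < k$ by directly checking with the algorithm that $C_5 \cartp C_n$ admits no star $6$-edge-coloring. It is worth stressing why this is special to the factor $C_5$: for $C_4$ every path-product is $6$-colorable, since $C_4 \cartp P_k \subseteq C_4 \cartp C_{2\ell}$ for a suitable even length and $\chis{C_4 \cartp C_{2\ell}} = 6$ by Theorem~\ref{lem:C4kC2n}, so no finite subgraph forces a seventh color there; with $C_5$ the expectation is precisely that one does.

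For the upper bound I would exhibit explicit star $7$-edge-colorings of a handful of short products $C_5 \cartp C_m$ and then propagate them. The aim is to produce a coloring of some $C_5 \cartp C_a$ that \emph{includes} (in the sense preceding Lemma~\ref{lem:combine}) a coloring of $C_5 \cartp C_b$ for a coprime pair with $b < a$ — for example $(b,a) = (3,4)$ or $(3,5)$ — whence Lemma~\ref{lem:combine} gives $\chis{C_5 \cartp C_{pb+qa}} \le 7$ for all $p,q \ge 0$ and Theorem~\ref{thm:frob} guarantees that every length past the corresponding Frobenius bound is representable in this form. The two base cases $C_5 \cartp C_3$ and $C_5 \cartp C_4$ are already drawn in Figures~\ref{fig:C3C5} and~\ref{fig:C4C5}, and the finitely many lengths below the Frobenius bound, together with any non-representable value such as the Frobenius number itself, would be dispatched by a few more explicit colorings and by Lemma~\ref{lem:tiling} to reach their multiples.

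The main obstacle is the lower bound, and within it the nonexistence of a star $6$-edge-coloring. Unlike the $K_n$ estimates, this cannot be reached by counting or degree considerations, so it ultimately rests on the exhaustive search of Algorithm~\ref{alg:color}; a naive enumeration over $C_5 \cartp P_k$ (or over $C_5 \cartp C_n$) may well be too large to terminate. I expect the decisive step to be a pruning lemma in the spirit of the two-phase argument used for $C_4 \cartp C_{11}$ in Proposition~\ref{lem:C4Cm} — restricting how colors may repeat along the $4$-paths of a single $C_5$-fiber — which collapses the search to a feasible size. The remaining work, matching the explicit colorings to the residues produced by the Frobenius argument so that no length $n \ge 3$ is left uncovered, is routine bookkeeping.
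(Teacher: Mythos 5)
Your proposal is correct and follows essentially the same route as the paper: the paper's lower bound rests on the computer certificate $\chis{C_5 \cartp P_7} \ge 7$ (giving all $n \ge 7$ by subgraph monotonicity) plus direct algorithmic checks for $3 \le n \le 6$, and its upper bound uses explicit colorings of $C_5 \cartp C_m$ for $m \in \set{3,4,5,7,11}$, Lemma~\ref{lem:tiling}, and the fact that the coloring of $C_5 \cartp C_7$ includes one of $C_5 \cartp C_3$, so Lemma~\ref{lem:combine} and Theorem~\ref{thm:frob} cover all $n \ge 12$. The only cosmetic differences are that you dispatch $n=3,4$ via Theorem~\ref{lem:C3Cn} and Proposition~\ref{lem:C4Cm} rather than by direct search, and the paper's coprime pair for the Frobenius step is $(3,7)$ rather than your suggested $(3,4)$ or $(3,5)$.
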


\begin{proof}
	The lower bounds $\chis{C_5 \cartp C_{n}} > 6$ for $3 \le n \le 6$, were established using 
	Algorithm~\ref{alg:color}.
	For $n \ge 7$, using Algorithm~\ref{alg:color}, we infer that $\chis{C_5 \cartp P_n} \ge 7$.
	Therefore, by Observation~\ref{obs:sub}, we have $\chis{C_5 \cartp C_{n}} \ge 7$.

	Star $7$-edge-colorings of $C_5 \cartp C_m$, for $m \in \set{3,4,5,7,11}$, are depicted in Figures~\ref{fig:C3C5},
	\ref{fig:C4C5}, \ref{fig:C5C5}, \ref{fig:C5C7}, and~\ref{fig:C5C11}, respectively.
	By Lemma~\ref{lem:tiling}, we also infer star $7$-edge-colorings of $C_5 \cartp C_m$ for $m \in \set{6,8,9,10}$.
	Finally, since in Figure~\ref{fig:C5C7}, a star $7$-edge-coloring of $C_5 \cartp C_3$ is included, 
	by Lemma~\ref{lem:combine} and Theorem~\ref{thm:frob}, we obtain 
	$\chis{C_{5} \cartp C_{n}} = 7$ for every integer $n \ge 12$.
\end{proof}

\begin{theorem}
	\label{lem:C6Cn}
	For every integer $n$, where $n \ge 3$, we have
	$$
		\chis{C_6 \cartp C_n} = 
			\begin{cases} 
				6, &\text{if $n \equiv 0 \bmod{3}$ or $n \equiv 0 \bmod{4}$,} \\
				7, &\text{otherwise.}
			\end{cases}
	$$
\end{theorem}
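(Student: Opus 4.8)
The plan is to prove the two bounds separately and to split the equality claim according to the divisibility of $n$. The universal lower bound is immediate: combining Observation~\ref{obs:sub} with Corollary~\ref{cor:lower} gives $\chis{C_6 \cartp C_n} \ge \chis{C_6 \cartp P_n} \ge 6$ for every $n \ge 3$. For the equality $\chis{C_6 \cartp C_n} = 6$ when $3 \mid n$ or $4 \mid n$, I would reduce directly to earlier results. If $n = 3\ell$, then $C_6 \cartp C_n = C_{3\cdot 2} \cartp C_{3\ell}$, so Corollary~\ref{cor:C3kC3k} yields $6$. If $n = 4k$, then, using commutativity of the Cartesian product, $C_6 \cartp C_n = C_{4k} \cartp C_{2\cdot 3}$, and Theorem~\ref{lem:C4kC2n} with $\ell = 3 \ge 2$ again yields $6$. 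Together with the universal lower bound, this settles both divisibility cases.

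For the upper bound $\chis{C_6 \cartp C_n} \le 7$ in the remaining cases, I would use Algorithm~\ref{alg:color} to produce explicit star $7$-edge-colorings of a few small products, the key one being a coloring of $C_6 \cartp C_7$ that \emph{includes} a star edge-coloring of $C_6 \cartp C_3$. Since $\gcd(3,7)=1$ and the corresponding Frobenius number is $3\cdot 7 - 3 - 7 = 11$, Lemma~\ref{lem:combine} combined with Theorem~\ref{thm:frob} gives $\chis{C_6 \cartp C_n} \le 7$ for every $n \ge 12$, while the representation $10 = 3 + 7$ also covers $n = 10$. The only short values not divisible by $3$ or $4$ that remain, namely $n \in \set{5,11}$ (note that $11$ is not representable as $3p+7q$), would be handled by exhibiting explicit colorings in figures; the values $n \in \set{3,4,6,8,9}$ already admit $6$-colorings from the paragraph above. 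This establishes $\chis{C_6 \cartp C_n} \le 7$ for all $n \ge 3$.

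The main obstacle is the lower bound $\chis{C_6 \cartp C_n} \ge 7$ when $n \not\equiv 0 \pmod 3$ and $n \not\equiv 0 \pmod 4$, because here the obstruction arises only upon closing the cylinder into a torus and hence cannot be extracted from any fixed subgraph (indeed $C_6 \cartp P_n$ is $6$-colorable for all $n$). My approach would mirror the uniqueness argument used in Theorem~\ref{lem:C3Cn} and the two-step analysis of Proposition~\ref{lem:C4Cm}: using Algorithm~\ref{alg:color}, I would enumerate, up to color permutation and symmetry, the star $6$-edge-colorings of a sufficiently long cylinder $C_6 \cartp P_k$, and show that each of them is forced to be periodic in the cyclic direction with period $3$ or period $4$. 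Any star $6$-edge-coloring of the torus restricts to such a coloring of the window, so it can close up consistently only when $3 \mid n$ or $4 \mid n$; thus no $6$-edge-coloring exists otherwise. The finitely many short cases $n \in \set{5,7,10,11}$ lying below the window length $k$ would be verified directly by Algorithm~\ref{alg:color}. I expect the delicate point to be choosing $k$ large enough that the enumerated window colorings genuinely certify this periodicity and that no coloring of coprime period (which would admit a ``bad'' $n$) slips through — precisely the kind of extensive case analysis the authors delegate to the computer.
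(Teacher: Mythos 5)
Your proposal is correct and follows essentially the same route as the paper: the divisible cases via the tiling results (Corollary~\ref{cor:C3kC3k} and Theorem~\ref{lem:C4kC2n}, which the paper invokes through Figures~\ref{fig:C3C3} and~\ref{fig:C4C6} and Lemma~\ref{lem:tiling}), and the crucial lower bound $\chis{C_6 \cartp C_n} > 6$ for $n$ not divisible by $3$ or $4$ via exactly the computational scheme the paper uses, namely enumerating all extensions of the nine possible precolored $C_6$-fibers to the long cylinder $C_6 \cartp P_{31}$ and showing every resulting coloring is forced into a periodic structure that can close into a torus only when $3 \mid n$ or $4 \mid n$. The one inessential difference is your treatment of the upper bound $\chis{C_6 \cartp C_n} \le 7$: instead of constructing a new coloring of $C_6 \cartp C_7$ including $C_6 \cartp C_3$ and invoking Lemma~\ref{lem:combine} with Theorem~\ref{thm:frob}, a single line suffices, since Lemma~\ref{lem:tiling} gives $\chis{C_6 \cartp C_n} = \chis{C_{2\cdot 3} \cartp C_n} \le \chis{C_3 \cartp C_n} \le 7$ by Theorem~\ref{lem:C3Cn}.
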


\begin{proof}
	By the star $6$-edge-colorings depicted in Figures~\ref{fig:C3C3} and~\ref{fig:C4C6},
	and by Lemma~\ref{lem:tiling}, we have $\chis{C_6 \cartp C_n} = 6$,
	for every integer $n$ divisible by $3$ or $4$.

	Now we show that $\chis{C_6 \cartp C_n} > 6$ if $n$ is not divisible by $3$ or $4$.
	First, we consider the graph $C_6 \cartp P_{31}$, where $P_{31}=v_1\dots v_{31}$.
	We start with a precolored $C_6$-fiber at the vertex $v_{16}$ (i.e., the middle $C_6$-fiber) 
	using each of the nine possible star $6$-edge-colorings of $C_6$ (up to symmetries and permutations of colors). 
	Using Algorithm~\ref{alg:color},
	we tried to extend such a precoloring to the whole $C_6 \cartp P_{31}$.
	For five colorings of the $C_6$-fiber, namely for
	$(0,1,0,2,0,3)$,
	$(0,1,0,2,1,2)$,
	$(0,1,0,2,1,3)$,
	$(0,1,2,0,1,3)$, and
	$(0,1,2,0,3,4)$,
	we obtain that such precolorings cannot be extended.

	For the remaining four precolorings, namely
	$(0,1,0,2,3,2)$,
	$(0,1,0,2,3,4)$,
	$(0,1,2,0,1,2)$, and
	$(0,1,2,3,4,5)$,
	we obtain 27\,078 colorings of $C_6 \cartp P_{31}$ in total.
	Some of them are either $4$-, or $6$-periodical, i.e., the initial coloring repeats on every $4$-th or $6$-th fiber,
	except at the final three fibers on both sides, where the coloring restrictions are relaxed.

	The remaining 26\,448 colorings correspond to the precoloring $(0,1,2,3,4,5)$, 
	and moreover, all $C_6$-fibers are colored by shifts of this precoloring, 
	and every pair of adjacent $C_6$-fibers is either colored with the same sequence of colors, or the coloring of one is the coloring of the other shifted by $1$. 
	In a more detailed analysis of these colorings, we find that, if they are periodic, then the period must be a multiple of 6.

	Thus, for the graphs $C_6 \cartp C_n$, it follows that they are star $6$-edge-colorable
	if $n$ is divisible by $3$ or $4$. Otherwise they are not star $6$-edge-colorable.
\end{proof}

\begin{theorem}
	\label{lem:C7Cn}
	For every integer $n$, where $n \ge 3$, we have
	$$
		\chis{C_7 \cartp C_{n}} = 7\,.
	$$
\end{theorem}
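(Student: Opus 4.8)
The plan is to prove the two inequalities $\chis{C_7 \cartp C_n} \ge 7$ and $\chis{C_7 \cartp C_n} \le 7$ separately, following the template already used for Theorems~\ref{lem:C3Cn}, \ref{lem:C5Cn}, and~\ref{lem:C6Cn}. The distinctive feature here is that, unlike for $C_3$, $C_4$, and $C_6$, the answer is \emph{always} $7$; in particular there is no residue class of $n$ for which six colors suffice, so the real content is the uniform lower bound.

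For the lower bound I would first dispatch the small cases by commutativity of the Cartesian product: $C_7 \cartp C_n = C_n \cartp C_7$, so for $n = 3,4,5,6$ the bound $\chis{C_7 \cartp C_n} \ge 7$ is immediate from Theorem~\ref{lem:C3Cn} (as $7 \not\equiv 0 \bmod 3$), Proposition~\ref{lem:C4Cm}, Theorem~\ref{lem:C5Cn}, and Theorem~\ref{lem:C6Cn} (as $7 \not\equiv 0 \bmod 3,4$), respectively. For the remaining $n$, the clean route is to exhibit a single finite non-$6$-colorable subgraph: using Algorithm~\ref{alg:color} I would show that $\chis{C_7 \cartp P_k} \ge 7$ for some fixed, computer-determined $k$. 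Since $C_7 \cartp P_k \subseteq C_7 \cartp P_n \subseteq C_7 \cartp C_n$ for every $n \ge k$, Observation~\ref{obs:sub} then gives $\chis{C_7 \cartp C_n} \ge 7$ for all $n \ge k$, and the finitely many values $7 \le n < k$ (if any) are handled by running the algorithm directly on the toroidal graphs $C_7 \cartp C_n$.

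For the upper bound I would follow the now-standard tiling-and-gluing scheme. First, using Algorithm~\ref{alg:color}, I would produce explicit star $7$-edge-colorings of $C_7 \cartp C_m$ for a handful of small lengths $m$ (displayed in figures), among them two colorings realizing the \emph{inclusion} relation of Lemma~\ref{lem:combine} for a coprime pair of lengths $a$ and $b$ with $b<a$, ideally $a=4$ and $b=3$, since then $\gcd(a,b)=1$ and the Frobenius bound $(a-1)(b-1)$ is smallest. Lemma~\ref{lem:combine} together with Theorem~\ref{thm:frob} then yields $\chis{C_7 \cartp C_n} \le 7$ for every $n \ge (a-1)(b-1)$, while Lemma~\ref{lem:tiling} lifts the base colorings of $C_7 \cartp C_3$, $C_7 \cartp C_4$, and $C_7 \cartp C_5$ to all multiples $3k$, $4k$, $5k$; the finitely many remaining $n$ below the Frobenius threshold are covered by explicit colorings.

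I expect the main obstacle to be the lower bound, not because it is conceptually deep but because the nonexistence of a star $6$-edge-coloring must be certified for an infinite family: the whole argument hinges on pinning down a \emph{small enough} path length $k$ for which $C_7 \cartp P_k$ is provably not $6$-colorable, and this is exactly where a formal human proof would degenerate into an unmanageable case analysis, so I would delegate it to Algorithm~\ref{alg:color}. A secondary, purely bookkeeping difficulty is ensuring that the computer-found base colorings actually satisfy the inclusion hypothesis of Lemma~\ref{lem:combine} (a stronger property than the mere existence of a $7$-coloring, which the earlier theorems already guarantee) for a coprime pair whose Frobenius number is small enough that no awkward gap of intermediate lengths is left uncovered.
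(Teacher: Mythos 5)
Your proposal is correct and follows essentially the same route as the paper: a computer-certified lower bound $\chis{C_7 \cartp P_7} \ge 7$ propagated through Observation~\ref{obs:sub}, plus an upper bound assembled from explicit star $7$-edge-colorings via Lemma~\ref{lem:tiling}, Lemma~\ref{lem:combine}, and Theorem~\ref{thm:frob} (the paper uses the coprime pair $(3,7)$ --- a coloring of $C_7 \cartp C_7$ including one of $C_7 \cartp C_3$ --- with Frobenius threshold $12$ and a separate coloring for $n=11$, rather than your preferred pair $(3,4)$, whose required inclusion coloring is not exhibited in the paper and would need fresh computer verification). Your dispatch of the small cases $n \in \set{3,4,5,6}$ by commutativity of the Cartesian product and the earlier results (Theorems~\ref{lem:C3Cn}, \ref{lem:C5Cn}, \ref{lem:C6Cn} and Proposition~\ref{lem:C4Cm}) is a mild, non-circular simplification over the paper, which instead reruns Algorithm~\ref{alg:color} on those graphs directly.
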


\begin{proof}
	The lower bounds $\chis{C_7 \cartp C_{n}} > 6$, for $n \in \set{3,4,5,6}$, 
	were established using Algorithm~\ref{alg:color}.
	For $n \ge 7$, using Algorithm~\ref{alg:color}, we infer that $\chis{C_7 \cartp P_n} \ge 7$.
	Therefore, by Observation~\ref{obs:sub}, we have $\chis{C_7 \cartp C_{n}} \ge 7$.

	Star $7$-edge-colorings of $C_7 \cartp C_n$, for $n \in \set{3,4,5,7}$, are depicted in Figures~\ref{fig:C3C7},
	\ref{fig:C4C7}, \ref{fig:C5C7}, and~\ref{fig:C7C7}, respectively.
	By Lemma~\ref{lem:tiling}, from these colorings, we also infer star $7$-edge-colorings of $C_7 \cartp C_n$ for $n \in \set{6,8,9,10}$.
	Moreover, in the coloring depicted in Figure~\ref{fig:C11C11}, a star $7$-edge-coloring of $C_7 \cartp C_{11}$ is included,
	and hence we also have $\chis{C_7 \cartp C_{11}} = 7$.	
	Finally, since in the coloring depicted in Figure~\ref{fig:C7C7}, a star $7$-edge-coloring of $C_7 \cartp C_3$ is included, 
	by Lemma~\ref{lem:combine} and Theorem~\ref{thm:frob}, we have 
	$\chis{C_{7} \cartp C_{n}} = 7$ for every $n \ge 12$.
\end{proof}

\begin{proposition}
	\label{lem:C8C9}
	For $C_8 \cartp C_{9}$, we have
	$$
		\chis{C_8 \cartp C_9} = 7\,.
	$$
\end{proposition}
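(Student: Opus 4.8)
The plan is to establish the two bounds separately. The lower bound $\chis{C_8 \cartp C_9} \ge 7$ is the substantive part and, following the methodology used for the sporadic cases such as $C_4 \cartp C_{11}$ in Proposition~\ref{lem:C4Cm}, I expect it to require a computer search; the upper bound $\chis{C_8 \cartp C_9} \le 7$ should follow cheaply from results already proved for products with $C_3$ (or $C_4$).

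For the upper bound I would note that $9 = 3 \cdot 3$ and apply Lemma~\ref{lem:tiling} with $H = C_8$, $m = 3$, and $k = 3$. Since the Cartesian product is commutative, this gives
\[
	\chis{C_8 \cartp C_9} = \chis{C_9 \cartp C_8} \le \chis{C_3 \cartp C_8}\,.
\]
As $8$ is not divisible by $3$, Theorem~\ref{lem:C3Cn} yields $\chis{C_3 \cartp C_8} = 7$, whence $\chis{C_8 \cartp C_9} \le 7$. (Equivalently, since $8 = 4\cdot 2$, one could tile a star $7$-edge-coloring of $C_4 \cartp C_9$ from Proposition~\ref{lem:C4Cm}; I prefer the route through $C_3 \cartp C_8$ because the numbers are smallest, and either way the argument relies only on earlier results rather than on the global $\le 7$ bound.)

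For the lower bound I would argue by contradiction that no star $6$-edge-coloring of $C_8 \cartp C_9$ exists and certify this with Algorithm~\ref{alg:color}. Because the graph has $72$ vertices and $144$ edges, a naive search is infeasible, so I would first reduce the search space in two ways. Exploiting the symmetry of the torus, I would precolor a single fiber — say one $C_8$-fiber, or by commutativity one $C_9$-fiber — ranging only over the star $6$-edge-colorings of that cycle up to rotation, reflection, and permutation of colors, exactly as the middle $C_6$-fiber was fixed in the proof of Theorem~\ref{lem:C6Cn}. I would then adopt the two-step refinement of Proposition~\ref{lem:C4Cm}: first verify that any fiber carrying a repeated color on some $4$-path cannot be completed to a star $6$-edge-coloring, and then restrict the remaining search to colorings in which every $4$-path inside each fiber is rainbow, checking that none of these closes up consistently around both cycles.

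The main obstacle is precisely this lower bound. Unlike the $C_3$, $C_4$, and $C_6$ cases, none of the divisibility-based $6$-colorability results ($C_{3k}\cartp C_{3\ell}$, $C_{4k}\cartp C_{2\ell}$, or $C_6\cartp C_n$) applies here: $9$ is odd, and neither factor is divisible by the modulus that makes the other cheap, so there is no structural shortcut and non-existence of a $6$-coloring must be established computationally. The delicate point is making the backtracking terminate within reason: I expect that fixing one fiber and enforcing the rainbow-$4$-path condition on fibers, together with the conflict checks already built into \texttt{Conflict}, prune the tree enough to exhaust all branches and return ``No coloring found''.
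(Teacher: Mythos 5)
Your proposal is correct and essentially coincides with the paper's proof: the paper also establishes the lower bound by exhaustive computer search (precoloring a $C_9$-fiber with each of the $147$ non-isomorphic star $6$-edge-colorings of $C_9$ and checking that none extends to $C_8 \cartp C_9$), and also derives the upper bound from Lemma~\ref{lem:tiling}, just instantiated as $\chis{C_8 \cartp C_9} \le \chis{C_4 \cartp C_9} = 7$ rather than your $\chis{C_9 \cartp C_8} \le \chis{C_3 \cartp C_8} = 7$ --- a variant you yourself flag as equivalent. Both tiling routes rely only on results established earlier in the paper, so there is nothing to correct.
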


\begin{proof}	
	We determined that $\chis{C_8 \cartp C_9} \ge 7$ by exhaustive computer search.
	Namely, we generated all $147$ non-isomorphic star $6$-edge-colorings of $C_9$ 
	and tried to extend each of them to the graph $C_8 \cartp C_9$. 
	None of them could be extended, thus $\chis{C_8 \cartp C_9} \ge 7$.
	The equality follows from Lemma~\ref{lem:tiling} and the fact that $\chis{C_4 \cartp C_9} = 7$.
\end{proof}

Finally, we give a general result, showing that $7$ is the upper bound for the star chromatic index of 
the Cartesian products of any two cycles.
\begin{theorem}
	\label{lem:CmCn}
	For every pair of positive integers $m$ and $n$, where $3 \le m \le n$, we have
	$$
		\chis{C_m \cartp C_n} \le 7\,.
	$$
\end{theorem}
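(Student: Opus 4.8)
The plan is to reduce an arbitrary product $C_m \cartp C_n$ to the cases already settled in this subsection, in which one factor has length at most $7$. By Theorems~\ref{lem:C3Cn}, \ref{lem:C4kC2n}, \ref{lem:C5Cn}, \ref{lem:C6Cn}, \ref{lem:C7Cn}, Proposition~\ref{lem:C4Cm}, and Theorem~\ref{lem:C4Cn}, we already have $\chis{C_s \cartp C_n} \le 7$ for every $s \in \set{3,4,5,6,7}$ and every $n \ge 3$ (for $s=4$ the cases of even $n$, of $n \in \set{5,7,9,11}$, and of odd $n \ge 13$ are covered respectively by Theorem~\ref{lem:C4kC2n}, Proposition~\ref{lem:C4Cm}, and Theorem~\ref{lem:C4Cn}, while $n=3$ follows from Theorem~\ref{lem:C3Cn}). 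Hence, by Lemma~\ref{lem:tiling}, whenever $m$ is divisible by $3$, $4$, $5$, or $7$ we obtain $\chis{C_m \cartp C_n} \le \chis{C_s \cartp C_n} \le 7$ for a suitable divisor $s$; by symmetry of the Cartesian product the same holds if $n$ has such a divisor. What remains are the \emph{resistant} lengths, namely those coprime to $105$ and not divisible by $4$ (the smallest being $11,13,17,19,22,\dots$), and the essential difficulty is when both $m$ and $n$ are of this form.

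Since tiling can only replace a length by one of its divisors, it never reaches a small factor from a resistant length, so here Lemma~\ref{lem:combine} together with Theorem~\ref{thm:frob} is indispensable. Applying Lemma~\ref{lem:combine} in the first coordinate with the coprime pair $(3,4)$, whose Frobenius number is $5$, every $m \ge 3$ with $m \neq 5$ can be written as $m = 3p + 4q$ with $p,q \ge 0$. Consequently, as soon as we know property~($\star$), that for the given $n$ some star $7$-edge-coloring of $C_4 \cartp C_n$ \emph{includes} a star edge-coloring of $C_3 \cartp C_n$ (in the sense defined before Lemma~\ref{lem:combine}, applied to the first factor with $H = C_n$), Lemma~\ref{lem:combine} yields $\chis{C_m \cartp C_n} \le 7$ for all these $m$. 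The single leftover value $m = 5$ is handled by Theorem~\ref{lem:C5Cn}. Thus the whole statement reduces to establishing~($\star$) for every $n \ge 3$.

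Property~($\star$) is the main obstacle. Unlike the second-coordinate inclusions exploited in the earlier proofs (and displayed in the figures), this is an inclusion in the \emph{first} coordinate between cycles of the coprime lengths $3$ and $4$, so it cannot arise from tiling and must be constructed directly. I would first exhibit, for a finite set of base lengths $n$, an explicit star $7$-edge-coloring of $C_4 \cartp C_n$ in which three consecutive copies of $C_n$ along the $C_4$-direction, closed up using the colors carried by the fourth copy, form a valid star edge-coloring of $C_3 \cartp C_n$; such base colorings can be read off the figures or generated by Algorithm~\ref{alg:color}. I would then propagate~($\star$) to all remaining $n$ by applying Lemmas~\ref{lem:tiling} and~\ref{lem:combine} to the $C_n$ factor, so that the whole coloring is assembled from periodic copies of the base patterns and every window governing the $C_4$-to-$C_3$ reduction remains a copy of a valid base window, while Theorem~\ref{thm:frob} guarantees that finitely many base lengths reach every sufficiently large $n$. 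Checking that the inclusion indeed survives each propagation step, and that the chosen base lengths cover all residues, is the delicate technical core; once it is in place, $\chis{C_m \cartp C_n} \le 7$ follows for all $m,n \ge 3$.
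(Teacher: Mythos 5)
Your reduction machinery is sound as far as it goes: if, for every relevant $n$, there existed a star $7$-edge-coloring of $C_4 \cartp C_n$ that includes (in the first coordinate) a star edge-coloring of $C_3 \cartp C_n$, then Lemma~\ref{lem:combine} with the pair $(3,4)$ and the representability of every $m \ge 3$, $m \neq 5$, as $3p+4q$ would prove the theorem, with $m=5$ handled by Theorem~\ref{lem:C5Cn}. The genuine gap is that your property~($\star$) is never established, and contrary to what you assert, it cannot be ``read off the figures'': every first-coordinate inclusion exhibited in the paper is between cycles of lengths $3$, $7$, and $11$ (Figure~\ref{fig:C7C7} gives $C_3 \cartp C_7$ inside $C_7 \cartp C_7$, and Figure~\ref{fig:C11C11} gives $C_3 \cartp C_{11}$ inside $C_7 \cartp C_{11}$ inside $C_{11} \cartp C_{11}$); no figure and no statement anywhere provides a coloring of $C_4 \cartp C_n$ including one of $C_3 \cartp C_n$, for even a single value of $n$. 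Such an inclusion is exactly the kind of object that, in this paper, had to be produced by computer search and displayed explicitly, so deferring it to ``Algorithm~\ref{alg:color} would find it'' leaves the core of the proof missing --- it is not even clear that ($\star$) holds. Your propagation step has a second unverified requirement: to push ($\star$) from finitely many base lengths to all $n$ by applying Lemma~\ref{lem:combine} in the second coordinate, the first-coordinate inclusions of the two base colorings must be compatible (their $C_3$-reductions must agree on the overlap window), which is an additional structural property that your plan names but does not supply.

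The paper's own proof circumvents precisely this difficulty by choosing the coprime pair $(3,7)$ instead of $(3,4)$: the explicitly verified coloring of $C_7 \cartp C_7$ in Figure~\ref{fig:C7C7} includes a coloring of $C_3 \cartp C_7$ \emph{and} a coloring of $C_7 \cartp C_3$, and these two include a common coloring of $C_3 \cartp C_3$. This double, compatible inclusion is what legitimizes two successive applications of Lemma~\ref{lem:combine}, one per coordinate, and together with Theorem~\ref{thm:frob} (Frobenius bound $(3-1)(7-1)=12$) it yields $\chis{C_m \cartp C_n} \le 7$ for all $m,n \ge 12$. The finitely many remaining rows are then covered separately: $3 \le m \le 7$ by Theorems~\ref{lem:C3Cn}--\ref{lem:C7Cn}, $m \in \set{8,9,10}$ by Lemma~\ref{lem:tiling} applied to Theorems~\ref{lem:C4Cn}, \ref{lem:C3Cn}, and~\ref{lem:C5Cn}, and $C_{11} \cartp C_{11}$ by Figure~\ref{fig:C11C11}. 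Your $(3,4)$ pair would in fact be more economical than the paper's (it reaches every $m \neq 5$ directly, avoiding the separate treatment of $m \in \set{8,9,10,11}$), but only after someone constructs and verifies the base colorings with property~($\star$) and the compatibility needed for propagation; that construction is the actual content of the proof, and your proposal omits it.
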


\begin{proof}
	By Theorems~\ref{lem:C3Cn}-\ref{lem:C7Cn}, we have
	$\chis{C_m \cartp C_n} \le 7$ for $3 \le m \le 7$ and $n \ge 3$.
	Furthermore, by Lemma~\ref{lem:tiling}, 
	we can use Theorem~\ref{lem:C4Cn} to obtain a star $7$-edge-coloring of $C_8 \cartp C_n$,
	Theorem~\ref{lem:C3Cn} to obtain a star $7$-edge-coloring of $C_9 \cartp C_n$,
	and Theorem~\ref{lem:C5Cn} to obtain a star $7$-edge-coloring of $C_{10} \cartp C_n$, 
	for every $n \ge 3$. 
	The star $7$-edge-coloring of $C_{11} \cartp C_{11}$ is depicted in Figure~\ref{fig:C11C11}.
	
	We complete the proof by showing that $\chis{C_m \cartp C_n} \le 7$ if $m,n \ge 12$.	
	Note that the star $7$-edge-coloring of $C_7 \cartp C_7$ depicted in Figure~\ref{fig:C7C7},
	includes a $7$-edge-coloring of $C_3 \cartp C_7$ and a $7$-edge-coloring of $C_7 \cartp C_3$.
	Furthermore, the latter two colorings include a common star $7$-edge-coloring of $C_3 \cartp C_3$. 
	This fact enables us to use Lemma~\ref{lem:combine} and Theorem~\ref{thm:frob} to obtain $\chis{C_m \cartp C_n} \le 7$ for $m,n \ge 12$.
\end{proof}

The above results are summarized in Table~\ref{tbl:toroidal}.
\begin{table}[ht] 
\begin{center}
\begin{tabular}{|c||C{\colwidth}|C{\colwidth}|C{\colwidth}|C{\colwidth}|C{\colwidth}|C{\colwidth}|C{\colwidth}|C{\colwidth}|C{\colwidth}|C{\colwidth}|}
\hline
$m \backslash n$ & 	3 	& 	4 	& 	5	&	6	&	7	&	8	&	9	&	10	&	11	&	12 \\
\hline
\hline
3				& 	6	&	7 	&	7	&	6	&	7	&	7	&	6	&	7 	&	7 	&	6 \\
\hline
4				& 	7	&	6 	&	7	&	6	&	7	&	6	&	7   & 	6	&	7 	&	6 \\
\hline
5				& 	7	&	7 	&	7	&	7	&	7	&	7	&	7 	&	7	&	7 	&	7 \\
\hline
6				& 	6	&	6 	&	7	&	6	&	7	&	6	&	6 	&	7 	&	7 	&	6 \\
\hline
7				& 	7	&	7 	&	7	&	7	&	7	&	7	&	7 	&	7	&	7 	&	7 \\
\hline
8				& 	7	&	6 	&	7	&	6	&	7	&	6	&	7 	&	6 	&	\textcolor{red}{$7^-$} 	&	6 \\
\hline
9				& 	6	&	7 	&	7	&	6	&	7	&	7	&	6 	& 	\textcolor{red}{$7^-$}	&	\textcolor{red}{$7^-$}	&	6 \\
\hline
10				& 	7	&	6 	&	7	&	7	& 	7	&	6	&	\textcolor{red}{$7^-$} 	& 	\textcolor{red}{$7^-$}	&	\textcolor{red}{$7^-$}	&	6 \\
\hline
11				& 	7	&	7 	&	7	&	7	& 	7	&	\textcolor{red}{$7^-$}	&	\textcolor{red}{$7^-$} 	& 	\textcolor{red}{$7^-$}	&	\textcolor{red}{$7^-$}	&	\textcolor{red}{$7^-$} \\
\hline
12				& 	6	&	6 	&	7	&	6	& 	7	&	6	&	6 	& 	6	&	\textcolor{red}{$7^-$}	&	6 \\
\hline
\end{tabular}
\end{center}
\caption{The star chromatic index of the Cartesian products of cycles $\chis{C_m \cartp C_n}$.
	In red, we denote the cases, where the exact bounds are not established yet. 
	The value $7^-$ means that the exact value of the star chromatic index is either $6$ or $7$.}
\label{tbl:toroidal}
\end{table}


\subsection{Cartesian products of cycles and paths}
\label{sub:CnPn}

In the last part of this section, we give results about the Cartesian products of paths and cycles.
We begin with proving the cases for specific lengths of cycles.

\begin{theorem}
	\label{lem:PnC2k}
	For every pair of integers $k$ and $n$, where $k \ge 2$ and $n \ge 3$, we have
	$$
		\chis{C_{2k} \cartp P_n} = 6\,.
	$$
\end{theorem}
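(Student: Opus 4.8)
The lower bound is immediate. Since $2k \ge 4 \ge 3$ and $n \ge 3$, Corollary~\ref{cor:lower} gives $\chis{C_{2k} \cartp P_n} \ge 6$, so the entire task is to exhibit a star $6$-edge-coloring, i.e. to prove $\chis{C_{2k} \cartp P_n} \le 6$.

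For the upper bound my plan is to avoid constructing colorings by hand and instead embed $C_{2k} \cartp P_n$ into a product of two even cycles, one of whose lengths is divisible by $4$, so that Theorem~\ref{lem:C4kC2n} can be applied directly. Concretely, set $a = \lceil n/4 \rceil$, so that $4a \ge n$ and $a \ge 1$. A path on $n$ vertices is a subgraph of a cycle on $4a \ge n$ vertices, hence $P_n \subseteq C_{4a}$ and therefore $C_{2k} \cartp P_n \subseteq C_{2k} \cartp C_{4a}$. Since the star chromatic index is monotone under taking subgraphs (the restriction of any star edge-coloring is again a star edge-coloring — exactly the monotonicity already invoked in Observation~\ref{obs:sub}), this yields $\chis{C_{2k} \cartp P_n} \le \chis{C_{2k} \cartp C_{4a}}$.

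It then remains to evaluate $\chis{C_{2k} \cartp C_{4a}}$. By commutativity of the Cartesian product, $C_{2k} \cartp C_{4a} \cong C_{4a} \cartp C_{2k}$, which is of the form $C_{4k'} \cartp C_{2\ell'}$ with $k' = a \ge 1$ and $\ell' = k \ge 2$; here the hypothesis $k \ge 2$ of the theorem we are proving is precisely what guarantees $\ell' \ge 2$. Thus Theorem~\ref{lem:C4kC2n} applies and gives $\chis{C_{4a} \cartp C_{2k}} = 6$. Chaining the two inequalities produces $\chis{C_{2k} \cartp P_n} \le 6$, and combined with the lower bound this gives $\chis{C_{2k} \cartp P_n} = 6$.

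I do not expect a genuine combinatorial obstacle here once the embedding is set up; the proof is short. The only points requiring care are the choice of the parameter $a$, which must simultaneously satisfy $4a \ge n$ (so that $P_n$ embeds) and $a \ge 1$, and the verification that the target product meets the hypotheses of Theorem~\ref{lem:C4kC2n}, namely that one factor is a cycle of length a multiple of $4$ and the other an even cycle of length at least $4$. As an alternative I considered working directly in the cycle direction via Lemma~\ref{lem:combine} with the factors $C_4$ and $C_6$ over $H = P_n$, using the fact that every even integer at least $4$ can be written as $4p + 6q$ with $p,q \ge 0$; however, this route would additionally demand explicit star $6$-edge-colorings of $C_4 \cartp P_n$ and $C_6 \cartp P_n$ valid for every $n$, so the subgraph-embedding argument is the cleaner choice.
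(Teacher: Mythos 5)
Your proof is correct and takes essentially the same route as the paper: the lower bound comes from Corollary~\ref{cor:lower}, and the upper bound from observing that $C_{2k} \cartp P_n$ is a subgraph of $C_{2k} \cartp C_{4\lceil n/4 \rceil}$ and invoking Theorem~\ref{lem:C4kC2n}. The only difference is that you make explicit the commutativity of the Cartesian product and the verification of the hypotheses of Theorem~\ref{lem:C4kC2n} (in particular that $k \ge 2$ supplies the even-cycle-length condition), which the paper leaves implicit.
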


\begin{proof}
	By Corollary~\ref{cor:lower}, we have $\chis{C_{2k} \cartp P_n} \ge 6$.
	On the other hand, $6$ colors also suffice by Theorem~\ref{lem:C4kC2n}, 
	since $C_{2k} \cartp P_n$ is a subgraph of $C_{2k} \cartp C_{4\ell}$ for every $\ell \ge n/4$.
\end{proof}

\begin{theorem}
	\label{lem:PnC3}
	For every pair of integers $k$ and $n$, where $n \ge 3$, we have
	$$
		\chis{C_{3k} \cartp P_n} = 6\,.
	$$
\end{theorem}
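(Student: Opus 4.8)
The plan is to mirror the argument used for Theorem~\ref{lem:PnC2k}, simply replacing its even-cycle input with the divisible-by-three input supplied by Corollary~\ref{cor:C3kC3k}. First I would settle the lower bound, which is immediate: since $3k \ge 3$ and $n \ge 3$, Corollary~\ref{cor:lower} gives $\chis{C_{3k} \cartp P_n} \ge 6$.

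For the upper bound, the key observation is that $P_n$ embeds as a subgraph of a sufficiently long cycle whose length is divisible by $3$. Concretely, setting $\ell = \lceil n/3 \rceil$ we have $3\ell \ge n$, and any $n$ consecutive vertices of $C_{3\ell}$ induce a copy of $P_n$ (with one edge of the cycle deleted in the borderline case); hence $P_n$ is a subgraph of $C_{3\ell}$. Taking the Cartesian product with $C_{3k}$ preserves this subgraph relation, so $C_{3k} \cartp P_n$ is a subgraph of $C_{3k} \cartp C_{3\ell}$. By Corollary~\ref{cor:C3kC3k}, the latter satisfies $\chis{C_{3k} \cartp C_{3\ell}} = 6$, and since restricting a star edge-coloring to a subgraph is again a star edge-coloring (deleting edges cannot create a bichromatic $4$-path or $4$-cycle), the star chromatic index is monotone under subgraphs. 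Therefore $\chis{C_{3k} \cartp P_n} \le \chis{C_{3k} \cartp C_{3\ell}} = 6$, and combining the two bounds yields $\chis{C_{3k} \cartp P_n} = 6$.

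I do not expect a genuine obstacle here: essentially all of the combinatorial content has already been absorbed into Corollary~\ref{cor:C3kC3k} (which in turn rests on the explicit star $6$-edge-coloring of $C_3 \cartp C_3$ propagated by Lemma~\ref{lem:tiling}). The only point deserving a line of care is the embedding $P_n \hookrightarrow C_{3\ell}$ for the borderline value $n = 3\ell$, where one uses that $C_{3\ell}$ contains $P_{3\ell}$ as a spanning subgraph; for $n < 3\ell$ an induced path on $n$ consecutive vertices already suffices.
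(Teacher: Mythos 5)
Your proof is correct and follows essentially the same route as the paper: the lower bound via Corollary~\ref{cor:lower}, and the upper bound by embedding $C_{3k} \cartp P_n$ as a subgraph of $C_{3k} \cartp C_{3\ell}$ (with $\ell \ge n/3$) and invoking Corollary~\ref{cor:C3kC3k} together with subgraph monotonicity of the star chromatic index. Your extra remark about the borderline case $n = 3\ell$ is a harmless elaboration of the same argument.
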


\begin{proof}
	By Corollary~\ref{cor:lower}, we have $\chis{C_{3k} \cartp P_n} \ge 6$.
	On the other hand, $6$ colors also suffice by Corollary~\ref{cor:C3kC3k}, 
	since $C_{3k} \cartp P_n$ is a subgraph of $C_{3k} \cartp C_{3\ell}$ for every $\ell \ge n/3$.
\end{proof}

\begin{theorem}
	\label{lem:PnC5}
	For every integer $n$, where $n \ge 3$, we have
	$$
		\chis{C_5 \cartp P_n} = 
			\begin{cases} 
				6, &\text{if $n \in \set{3,4,5,6}$,} \\
				7, &\text{if $n  \ge 7$.}
			\end{cases}
	$$
\end{theorem}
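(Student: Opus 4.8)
The plan is to separate the lower-bound and upper-bound parts and to treat the two ranges $n \in \set{3,4,5,6}$ and $n \ge 7$ in turn, observing at the outset that nearly every piece follows from results already established, so that the only genuinely new work is a single explicit construction. For the lower bounds, Corollary~\ref{cor:lower} immediately yields $\chis{C_5 \cartp P_n} \ge 6$ for all $n \ge 3$. To sharpen this to $\chis{C_5 \cartp P_n} \ge 7$ when $n \ge 7$, I would simply invoke the computation already carried out in the proof of Theorem~\ref{lem:C5Cn}, which states exactly $\chis{C_5 \cartp P_n} \ge 7$ for $n \ge 7$; concretely, Algorithm~\ref{alg:color} certifies the base case $C_5 \cartp P_7$, and since $C_5 \cartp P_7$ is a subgraph of $C_5 \cartp P_n$ for every $n \ge 7$, monotonicity of the star chromatic index under taking subgraphs propagates the bound to all larger $n$.

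For the upper bound in the range $n \ge 7$, I would use that $C_5 \cartp P_n$ is a subgraph of $C_5 \cartp C_n$, so that Observation~\ref{obs:sub} gives $\chis{C_5 \cartp P_n} \le \chis{C_5 \cartp C_n}$, and then Theorem~\ref{lem:C5Cn} supplies $\chis{C_5 \cartp C_n} = 7$. Together with the lower bound of the previous paragraph, this settles $\chis{C_5 \cartp P_n} = 7$ for every $n \ge 7$ with essentially no new effort.

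The remaining and only substantive piece is the upper bound $\chis{C_5 \cartp P_n} \le 6$ for $n \in \set{3,4,5,6}$. Since $P_3 \subseteq P_4 \subseteq P_5 \subseteq P_6$, subgraph monotonicity reduces all four cases to exhibiting one star $6$-edge-coloring of $C_5 \cartp P_6$. I would construct such a coloring explicitly (via Algorithm~\ref{alg:color}, and record it in a figure) and verify directly that it contains no bichromatic $4$-path and no bichromatic $4$-cycle. This is the main obstacle of the theorem: unlike the toroidal grid $C_5 \cartp C_n$, which by Theorem~\ref{lem:C5Cn} always requires $7$ colors, the cylinder has two end $C_5$-fibers of reduced degree, and the content of the claim is precisely that this boundary slack is just enough to drop to $6$ colors for the short cylinders $P_3,\dots,P_6$ while it vanishes once $n \ge 7$. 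I expect the difficulty to lie entirely in producing (and certifying) the concrete $6$-coloring of $C_5 \cartp P_6$, since everything else is a routine combination of earlier lemmas.
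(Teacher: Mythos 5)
Your proposal is correct and follows essentially the same route as the paper: the lower bound $6$ from earlier results, the computer-certified lower bound $\chis{C_5 \cartp P_7} \ge 7$ propagated by subgraph monotonicity, the upper bound $7$ by embedding the cylinder into a torus covered by Theorem~\ref{lem:C5Cn}, and an explicit star $6$-edge-coloring of $C_5 \cartp P_6$ (the paper's Figure~\ref{fig:C5P6}) for the small cases. The only cosmetic differences are that the paper cites Theorem~\ref{thm:PnPn} directly instead of Corollary~\ref{cor:lower} and embeds into $C_5 \cartp C_{5k}$ rather than $C_5 \cartp C_n$; both are equivalent in substance.
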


\begin{proof}	
	Let $G = C_5 \cartp P_n$ for some integer $n \ge 3$.
	Suppose first that $n \in \set{3,4,5,6}$.
	By Theorem~\ref{thm:PnPn}, we have $\chis{P_3 \cartp P_5} = 6$,
	and thus, since $P_{5} \cartp P_3$ is a subgraph of $G$,
	it follows that $\chis{G} \ge 6$.
	On the other hand, in Figure~\ref{fig:C5P6}, we give a star $6$-edge-coloring of $C_5 \cartp P_6$,
	hence establishing $\chis{C_5 \cartp P_n} = 6$ for every $n \in \set{3,4,5,6}$.
	Now, suppose that $n \ge 7$.
	Using Algorithm~\ref{alg:color},
	we infer that $\chis{C_5 \cartp P_n} \ge 7$.
	The upper bound $\chis{C_5 \cartp P_n} \le 7$ follows 
	from the fact that $\chis{C_5 \cartp C_{5k}} = 7$ for every positive integer $k$ (see Theorem~\ref{lem:C5Cn} and Figure~\ref{fig:C5C5}).	
\end{proof}

\begin{theorem}
	\label{lem:PnC7}
	For every integer $n$, where $n \ge 3$, we have
	$$
		\chis{C_7 \cartp P_n} = 
			\begin{cases} 
				6, &\text{if $n \in \set{3,4,5,6}$,} \\
				7, &\text{if $n  \ge 7$.}
			\end{cases}
	$$
\end{theorem}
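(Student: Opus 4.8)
The plan is to mirror the proof of Theorem~\ref{lem:PnC5} almost verbatim, since $C_7$ plays exactly the same role as $C_5$: it is an odd cycle whose product with a short path drops to $6$ colors, while for longer paths the value matches the cycle-product value of $7$. The argument splits according to the two branches of the claimed formula, and I would handle lower and upper bounds separately in each branch.

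First I would dispose of the lower bounds. For every $n \ge 3$, Corollary~\ref{cor:lower} already gives $\chis{C_7 \cartp P_n} \ge 6$, which settles the cases $n \in \set{3,4,5,6}$ from below. For $n \ge 7$, no new computation is needed: the proof of Theorem~\ref{lem:C7Cn} records that Algorithm~\ref{alg:color} establishes $\chis{C_7 \cartp P_n} \ge 7$, so I would simply invoke that fact.

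Next the upper bounds. For $n \ge 7$ the bound is immediate from Observation~\ref{obs:sub}, which gives $\chis{C_7 \cartp P_n} \le \chis{C_7 \cartp C_n} = 7$ by Theorem~\ref{lem:C7Cn}; together with the lower bound this yields equality to $7$ for all $n \ge 7$. The only genuinely new work is the upper bound $\chis{C_7 \cartp P_6} \le 6$. Since $C_7 \cartp P_n$ is a subgraph of $C_7 \cartp P_6$ for $n \in \set{3,4,5}$, a single star $6$-edge-coloring of $C_7 \cartp P_6$ simultaneously delivers $\chis{C_7 \cartp P_n} \le 6$ for every $n \in \set{3,4,5,6}$, and with the lower bound of $6$ this completes the first branch. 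I would therefore exhibit such a coloring explicitly, obtained using Algorithm~\ref{alg:color}.

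The main obstacle is precisely the existence of this one $6$-edge-coloring of $C_7 \cartp P_6$; it is the single step not reducible to earlier lemmas. Unlike the $C_{2k}$ and $C_{3k}$ cases of Theorems~\ref{lem:PnC2k} and~\ref{lem:PnC3}, I cannot import a coloring from a toroidal product, because Theorem~\ref{lem:C7Cn} forces $\chis{C_7 \cartp C_n} = 7$ for every $n$: the extra edges closing the path into a cycle are exactly what costs the seventh color. The coloring is thus delicate in that it must be arranged so that the absence of wrap-around in the $P_6$-direction is genuinely exploited, with no bichromatic $4$-path forced along that direction, even though the seven $C_7$-fibers cannot all be colored with the same repeating pattern; verifying this is best left to the computer search rather than to a hand case analysis.
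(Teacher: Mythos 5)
Your proposal is correct and follows essentially the same route as the paper: the lower bound of $6$ from the path-grid results, a computer-found star $6$-edge-coloring of $C_7 \cartp P_6$ (the paper's Figure~\ref{fig:C7P6}) handling $n \in \set{3,4,5,6}$ via subgraphs, a computer-established lower bound of $7$ at $n = 7$, and the upper bound of $7$ inherited from the cycle products $C_7 \cartp C_{3k}$ (the paper cites the pattern of Figure~\ref{fig:C3C7} directly, while you cite Theorem~\ref{lem:C7Cn} via Observation~\ref{obs:sub}, which rests on the same colorings). The only cosmetic differences are which earlier statements you invoke for the routine steps, and your remark that the $P_6$-coloring cannot be imported from any toroidal product is a correct reading of why the paper needs a separate figure there.
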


\begin{proof}
	Let $G = C_7 \cartp P_n$ for some integer $n \ge 3$.
	Suppose first that $n \in \set{3,4,5,6}$.
	Since $\chis{P_3 \cartp P_5} = 6$, we again have $\chis{G} \ge 6$.
	On the other hand, in Figure~\ref{fig:C7P6}, we give a star $6$-edge-coloring of $G$, 
	and hence $\chis{G} = 6$ for every $n \in \set{3,4,5,6}$.
	Suppose now that $n \ge 7$.
	Using Algorithm~\ref{alg:color}, we infer that $\chis{C_7 \cartp P_7} = 7$, and hence $\chis{G} \ge 7$.
	The equality is established by Figure~\ref{fig:C3C7}, where a pattern for a star $7$-edge-coloring of $C_7 \cartp C_{3k}$
	is presented. Since $G$ is a subgraph of $C_7 \cartp C_{3k}$, for $k$ large enough, the statement follows.
\end{proof}

We now turn our attention to the Cartesian products of cycles and paths $P_m$, for $m \in \set{2,3,4}$.

\begin{theorem}
	\label{lem:P2Cn}
	For every integer $m$, where $m \ge 3$, we have
	$$
		\chis{C_m \cartp P_2} = 
			\begin{cases} 
				6, &\text{if $m = 3$,} \\
				4, &\text{if $m \equiv 0 \bmod{4}$,} \\
				5, &\text{otherwise.}
			\end{cases}
	$$
\end{theorem}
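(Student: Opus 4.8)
The plan is to split the statement into its three value-regimes and, crucially, to observe that $C_m \cartp P_2$ is a \emph{simple cubic} graph for every $m\ge 3$, so that the machinery of Theorem~\ref{thm:cubic} applies directly. First I would dispose of the value $4$ and, simultaneously, of all the lower bounds. By Theorem~\ref{thm:cubic}(b) we have $\chis{C_m \cartp P_2}\ge 4$ always, with equality precisely when $C_m \cartp P_2$ covers the graph $Q_3$ of the $3$-dimensional hypercube. Since $Q_3 \cong C_4 \cartp P_2$, I would argue the covering criterion by a counting-and-construction argument: a connected cover of $Q_3$ has vertex count a multiple of $|V(Q_3)|=8$, forcing $2m\equiv 0 \pmod 8$, i.e. $4\mid m$; conversely, when $4\mid m$ the reduction $C_m\to C_4$ given by $i\mapsto i \bmod 4$, combined with the identity on $P_2$, is locally bijective and hence a cover. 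This yields $\chis{C_m \cartp P_2}=4 \iff 4\mid m$, which establishes the line $m\equiv 0 \bmod 4$ and, at the same time, gives $\chis{C_m \cartp P_2}\ge 5$ for every $m\not\equiv 0\pmod 4$. The case $m=3$ is then quarantined separately: $C_3 \cartp P_2$ is the triangular prism, one of the two known simple bridgeless cubic graphs of star chromatic index $6$, a fact already used in the proof of Corollary~\ref{cor:lower}.

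It remains to prove the matching upper bound $\chis{C_m \cartp P_2}\le 5$ for all $m\ge 5$ with $4\nmid m$. Here I would follow the same template as the earlier toroidal results: produce, with Algorithm~\ref{alg:color}, explicit star $5$-edge-colorings of a short list of small prisms, say $C_5\cartp P_2$, $C_6\cartp P_2$, and $C_7\cartp P_2$, and then propagate them. Lemma~\ref{lem:tiling} immediately lifts each base coloring to all its multiples (so a coloring of $C_5\cartp P_2$ covers $C_{5k}\cartp P_2$, and so on). For the remaining lengths I would arrange the base colorings so that a longer prism's coloring \emph{includes} a shorter one's, with the two lengths coprime; Lemma~\ref{lem:combine} together with the Frobenius bound of Theorem~\ref{thm:frob} then gives $\chis \le 5$ for every sufficiently large $m$, after which only finitely many small values with $4\nmid m$ remain, each handled by a direct coloring. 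Combining this with the $\ge 5$ bound from the cubic-cover argument yields the value $5$ on the whole regime.

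The hard part will be two-fold. The genuine combinatorial content is the construction and verification of the explicit star $5$-colorings of the base prisms: since the only degrees of freedom at each rung are the colors of two rail-edges and one rung-edge, avoiding every bichromatic $P_4$ and $C_4$ \emph{including the ones crossing the wrap-around edges} is delicate, and it is precisely this verification across the closure that the ``includes'' relation of Lemma~\ref{lem:combine} demands. The second obstacle is purely arithmetic bookkeeping: the base lengths and their inclusion relations must be chosen so that the Frobenius argument reaches every residue class modulo $4$, and I would expect to need one or two extra small colorings (e.g.\ of $C_9\cartp P_2$ or $C_{11}\cartp P_2$) to mop up the values not representable by the chosen coprime pair. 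Finally, I would take care to note that the covering criterion only yields $\ge 5$ for $m=3$, so the value $6$ there must be imported as the known extremal fact rather than derived from Theorem~\ref{thm:cubic}(b) alone.
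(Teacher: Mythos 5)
Your handling of everything except the upper bound $\chis{C_m \cartp P_2}\le 5$ coincides with the paper's own proof. The paper likewise obtains the value $4$ from Theorem~\ref{thm:cubic}(b) via the covering $C_m \cartp P_2 \to Q_3$ when $4 \mid m$, obtains the lower bound $5$ for $4 \nmid m$ from the same theorem, and imports $\chis{C_3 \cartp P_2}=6$ as a known extremal fact. Your fiber-counting argument for the necessity of $4 \mid m$ (connected covers of $Q_3$ have order divisible by $8$) and the explicit covering map $i \mapsto i \bmod 4$ are details the paper asserts without proof, so on this half your write-up is, if anything, more complete than the original.

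The divergence is the upper bound for $m\ge 5$ with $4\nmid m$. The paper settles it in one sentence by citing Theorem~7 of Omoomi and Dastjerdi~\cite{OmoDas18}, which already states that these prisms are star $5$-edge-colorable; you are in effect re-proving that known theorem. Your proposed route---compute star $5$-edge-colorings of a few small prisms with Algorithm~\ref{alg:color}, lift them by Lemma~\ref{lem:tiling}, and propagate by Lemma~\ref{lem:combine} together with Theorem~\ref{thm:frob}---is exactly the propagation scheme the paper uses elsewhere (e.g.\ in Theorems~\ref{lem:C3Cn} and~\ref{lem:C5Cn}), so it is methodologically consistent. But as written it is a plan rather than a proof: no base coloring is exhibited, and the crucial hypothesis of Lemma~\ref{lem:combine}---that a \emph{single} coloring of the longer prism includes a coloring of the shorter, coprime-length prism---is strictly stronger than the separate existence of $5$-colorings of each prism, so it cannot be inferred from the truth of the statement; it must be verified on concrete colorings, and if no pair of small prisms admits such an inclusion, the Frobenius machinery has nothing to run on. The arithmetic mop-up you anticipate is also real: with the pair $(5,7)$, the values $m\in\set{6,9,11,13,18,23}$ are not representable and need extra bases or extra inclusion pairs. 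Until those colorings and inclusions are actually produced and checked, the $\le 5$ half of your argument remains open, whereas the citation route the paper takes closes it immediately.
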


\begin{proof}
	For $m=3$, recall that $\chis{C_3 \cartp P_2} = 6$.
	If $m \equiv 0 \bmod{4}$, then the graph $C_m \cartp P_2$ covers the graph $Q_3$, 
	and hence its star chromatic index equals $4$ by Theorem~\ref{thm:cubic}.
	Finally, if $m \not\equiv 0 \bmod{4}$, by Theorem~\ref{thm:cubic}, we have
	$\chis{C_m \cartp P_2} \ge 5$. The equality follows from Theorem~7 in~\cite{OmoDas18}.
\end{proof}

\begin{theorem}
	\label{lem:PnCn}
	For every pair of integers $m$ and $n$, where $m \ge 3$ and $n \in \set{3,4,5,6}$, we have
	$$
		\chis{C_m \cartp P_n} = 6\,.
	$$
\end{theorem}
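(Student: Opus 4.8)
The plan is to prove the two bounds separately, with the lower bound being immediate and the upper bound assembled from the earlier results in this subsection together with the combination machinery. For the lower bound, since every $n \in \set{3,4,5,6}$ satisfies $n \ge 3$, Corollary~\ref{cor:lower} already yields $\chis{C_m \cartp P_n} \ge 6$ for every $m \ge 3$, so nothing further is needed there.

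For the upper bound I would first note that the preceding theorems dispose of most values of $m$: Theorem~\ref{lem:PnC2k} gives $\chis{C_m \cartp P_n} = 6$ whenever $m$ is even, Theorem~\ref{lem:PnC3} does the same whenever $3 \mid m$, and Theorems~\ref{lem:PnC5} and~\ref{lem:PnC7} settle $m = 5$ and $m = 7$ precisely in the range $n \in \set{3,4,5,6}$. Hence the only values still to be treated are those $m$ that are odd, not divisible by $3$, and at least $11$ (i.e. $m$ coprime to $6$ with $m \ge 11$). I would stress that here the subgraph trick $C_m \cartp P_n \subseteq C_m \cartp C_{n'}$ is of no use, because for such $m$ no product $C_m \cartp C_{n'}$ is known to be star $6$-edge-colorable; for instance, the row of Table~\ref{tbl:toroidal} for $m = 11$ contains only the value $7$ and open entries. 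A genuine construction in the $C_m$-direction is therefore required.

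To obtain the remaining cases I would apply Lemma~\ref{lem:combine} with $H = P_n$ and two short, coprime cycle lengths. The cleanest choice is the pair $\set{3,5}$: starting from the explicit star $6$-edge-coloring of $C_5 \cartp P_6$ in Figure~\ref{fig:C5P6} (which restricts to $C_5 \cartp P_n$ for every $n \le 6$), one checks that it includes a star $6$-edge-coloring of $C_3 \cartp P_n$ in the sense defined before Lemma~\ref{lem:combine}; then Lemma~\ref{lem:combine} gives $\chis{C_{3p+5q} \cartp P_n} \le 6$ for all non-negative $p,q$, and by Theorem~\ref{thm:frob} every $m \ge 8$, together with $m = 3,5,6$, is of this form. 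Combined with $m = 4$ (Theorem~\ref{lem:PnC2k}) and $m = 7$ (Theorem~\ref{lem:PnC7}), this covers all $m \ge 3$. Should the required inclusion fail for that particular coloring, the same scheme works with the pair $\set{3,7}$ via the coloring of $C_7 \cartp P_6$ in Figure~\ref{fig:C7P6}, whose Frobenius bound leaves only $m = 11$ to be confirmed separately with Algorithm~\ref{alg:color}.

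The hard part will be the verification of the inclusion relation itself: unlike the bare existence of a $6$-coloring of $C_5 \cartp P_n$, what Lemma~\ref{lem:combine} demands is a coloring in which three consecutive $P_n$-fibers, augmented by the wraparound edges colored as in the long cycle, again form a legitimate star $6$-edge-coloring of $C_3 \cartp P_n$ — that is, the closing edges must create no bichromatic $4$-path or $4$-cycle. This is a property of the concrete coloring rather than of the parameters, so it must be confirmed on the explicit pictures (or by Algorithm~\ref{alg:color}) for each $n \in \set{3,4,5,6}$, and it is precisely there that the few genuinely exceptional small values of $m$ would, if the generic scheme leaves any uncovered, be handled by direct computation.
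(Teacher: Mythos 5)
Your skeleton matches the paper's: lower bound from Corollary~\ref{cor:lower}; even $m$ and $3\mid m$ from Theorems~\ref{lem:PnC2k} and~\ref{lem:PnC3}; $m=5,7$ from Theorems~\ref{lem:PnC5} and~\ref{lem:PnC7}; and the leftover $m$ coprime to $6$ with $m\ge 11$ via Lemma~\ref{lem:combine} plus Theorem~\ref{thm:frob}. You also correctly observe that the subgraph trick through toroidal grids is unavailable for these $m$, and your Frobenius arithmetic is right. But the load-bearing step of your argument --- that the coloring of $C_5\cartp P_6$ in Figure~\ref{fig:C5P6} \emph{includes} a star $6$-edge-coloring of $C_3\cartp P_6$ (or, as fallback, that the coloring of $C_7\cartp P_6$ in Figure~\ref{fig:C7P6} does) --- is precisely the part you cannot certify, and the paper's own proof is strong evidence that neither inclusion holds. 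If either did, the authors would have had a two-line finish with Frobenius bound $8$ (resp.\ $12$); instead they constructed a dedicated star $6$-edge-coloring of $C_{10}\cartp P_6$ (Figure~\ref{fig:P6C10}) that includes one of $C_3\cartp P_6$, giving the pair $\{3,10\}$ with Frobenius bound $(3-1)(10-1)=18$, which covers $m\in\{10,13,16\}$ and $m\ge 18$, and even then they still had to dispose of $m=11$ and $m=17$ separately, by exhibiting computer-found colorings of $C_{11}\cartp P_8$ (Figure~\ref{fig:P8C11}) and of $C_{14}\cartp P_8$ including $C_3\cartp P_8$ (Figure~\ref{fig:P8C14}).

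So the gap is concrete: your upper bound for $m$ coprime to $6$ rests on the existence of a star $6$-edge-coloring of $C_5\cartp P_6$ (or $C_7\cartp P_6$) with the inclusion property, and you have neither exhibited such a coloring nor can it be read off the paper's figures. Writing ``check it on the picture; if it fails, fall back to $\{3,7\}$; if that also fails, run Algorithm~\ref{alg:color} on the remaining case'' is a research plan, not a proof --- as you yourself note, inclusion is a property of a concrete coloring, not of the parameters. Until a coloring with that property is actually produced (which is exactly what Figures~\ref{fig:P6C10}, \ref{fig:P8C11} and~\ref{fig:P8C14} accomplish for the pair $\{3,10\}$ and the stray cases $m\in\{11,17\}$), the statement remains unproved for infinitely many $m$.
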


\begin{proof}
	First, recall that $6$ colors are needed in all the cases by Corollary~\ref{cor:lower}.
	Next, since $C_m \cartp P_6$ contains all the graphs $C_m \cartp P_n$, for $n \in \set{3,4,5}$,
	it suffices to show that there exists a star $6$-edge-coloring of $C_m \cartp P_6$.
	
	By Lemma~\ref{lem:tiling}, Theorems~\ref{lem:C3Cn} and~\ref{lem:C4kC2n}, 
	we have $\chis{C_{3k} \cartp P_6} = \chis{C_{4k} \cartp P_6} = 6$,
	for any positive integer $k$.
	Similarly, the star $6$-edge-colorings depicted in Figures~\ref{fig:C5P6} and~\ref{fig:C7P6} together with Lemma~\ref{lem:tiling}
	imply $\chis{C_{5k} \cartp P_6} = \chis{C_{7k} \cartp P_6} = 6$.
	Furthermore, using the star $6$-edge-coloring of $C_{10} \cartp P_6$ depicted in Figure~\ref{fig:P6C10},
	which includes a star $6$-edge-coloring of $C_3 \cartp P_6$, we obtain
	$\chis{C_{m} \cartp P_6} = 6$ for every $m \in \set{10,13,16}$. 
	Moreover, together with Lemma~\ref{lem:combine} and Theorem~\ref{thm:frob}, this coloring implies $\chis{C_{m} \cartp P_6} = 6$ for every $m \ge 18$.
	For the remaining two cases, namely $m \in \set{11,17}$, the corresponding colorings are depicted
	in Figures~\ref{fig:P8C11} and \ref{fig:P8C14}
	(note that, in fact, we have even more, namely, we give a star $6$-edge-coloring of $C_{11} \cartp P_8$ 
	and a star $6$-edge-coloring of $C_{14} \cartp P_8$ which includes a star $6$-edge-coloring of $C_{3} \cartp P_8$).
\end{proof}

We collect known and our new results in Table~\ref{tbl:cylinders}.

\begin{table}[htb!] 
\begin{center}
\begin{tabular}{|c||C{\colwidth}|C{\colwidth}|C{\colwidth}|C{\colwidth}|C{\colwidth}|C{\colwidth}|C{\colwidth}|C{\colwidth}|}
\hline
$m \backslash n$ & 	2 	& 	3 	& 	4	&	5	&	6	&	7	&	8	&	$9^+$ \\
\hline
\hline
3				& 	6	&	6 	&	6	&	6 	& 	6	& 	6	& 	6 	& 	6 \\
\hline
4				& 	4	&	6 	&	6	&	6 	& 	6	& 	6	& 	6 	& 	6 \\
\hline
5				& 	5	&	6 	&	6	&	6 	&	6 	&	7 	& 	7 	& 	7 \\
\hline
6				& 	5	&	6 	&	6	&	6 	& 	6	& 	6	& 	6 	& 	6 \\
\hline
7				& 	5	&	6 	&	6	&	6 	& 	6 	& 	7	& 	7 	& 	7 \\
\hline
8				& 	4	&	6 	&	6	&	6 	& 	6 	& 	6	& 	6 	& 	6 \\
\hline
9				& 	5	&	6 	&	6	&	6 	& 	6  	&	6	&	6 	& 	6 \\
\hline
10				& 	5	&	6 	&	6	&	6 	& 	6	&	6 	& 	6 	& 	6 \\
\hline
11				& 	5	&	6 	&	6	&	6 	& 	6 	&	6	&	6  	& 	\textcolor{red}{$7^-$} \\
\hline
12				& 	4	&	6 	&	6	&	6 	& 	6 	& 	6 	& 	6 	& 	6 \\
\hline
13				& 	5	&	6 	&	6	&	6	& 	6 	&	\textcolor{red}{$7^-$}	&	\textcolor{red}{$7^-$}  	& 	\textcolor{red}{$7^-$} \\
\hline
14				& 	5	&	6 	&	6	&	6 	& 	6 	&	6	&	6 	& 	6 \\
\hline
15				& 	5	&	6 	&	6	&	6 	& 	6	& 	6	& 	6 	& 	6 \\
\hline
16				& 	4	&	6 	&	6	&	6 	& 	6	& 	6	& 	6 	& 	6 \\
\hline
17				& 	5	&	6 	&	6	&	6 	& 	6 	&	6	&	6 	& 	\textcolor{red}{$7^-$} \\
\hline
18				& 	5	&	6 	&	6	&	6 	& 	6	& 	6	& 	6 	& 	6 \\
\hline
$19^+$ and $m \equiv 0 \bmod{4}$ 
				& 	4	&	6 	&	6	&	6	& 	6 	&	6	& 	6 	& 	6 \\
\hline
$19^+$ and $m \equiv r \bmod{12}$, $r \in \set{2,3,6,9,10}$ 
				& 	5	&	6 	&	6	&	6	& 	6 	&	6	&	6 	& 	6 \\
\hline
$19^+$ and $m \equiv r \bmod{12}$, $r \in \set{1,5,7,11}$ 
				& 	5	&	6 	&	6	&	6	& 	6 	&	\textcolor{red}{$7^-$}	&	\textcolor{red}{$7^-$}  	& 	\textcolor{red}{$7^-$} \\				
\hline

\end{tabular}
\end{center}
\caption{The star chromatic index of the Cartesian products of cycles and paths $\chis{C_m \cartp P_n}$. 
	In red, we denote the cases, where the exact bounds are not established yet.
	The value $7^-$ means that the exact value of the star chromatic index is either $6$ or $7$.}
\label{tbl:cylinders}
\end{table}


\section{Conclusion}
\label{sec:conc}

In this paper, we have established tight upper bounds for the Cartesian product of cycles and paths,
and the Cartesian products of two cycles. 
We proved that $7$ colors always suffice for star edge-colorings of these graphs, 
which is, in a way, a surprising bound, especially because at least $6$ colors are always needed
as soon as one of the factors is not isomorphic to $P_2$.
Although we improved existing bounds and proved a number of exact values, 
there are still some open questions.
We are very confident that the following conjecture is true.
\begin{conjecture}
	\label{conj:PnCn}
	There exist constants $K_1$ and $K_2$ such that for every pair of integers $m$ and $n$, where $m \ge K_1$ and $n \ge K_2$,
	we have
	$$
		\chis{C_m \cartp P_n} = 6\,.
	$$
\end{conjecture}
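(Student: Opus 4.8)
The lower bound $\chis{C_m \cartp P_n} \ge 6$ already holds for all $m,n \ge 3$ by Corollary~\ref{cor:lower}, so the whole content of the statement lies in the upper bound $\chis{C_m \cartp P_n} \le 6$. My plan is to prove this upper bound for all sufficiently large $m$ and $n$ by a residue reduction followed by the combining machinery of Section~\ref{sec:prel}. First I would split on $m$: when $m$ is even or $m \equiv 0 \pmod 3$, the desired colorings exist by Theorems~\ref{lem:PnC2k} and~\ref{lem:PnC3}, so it suffices to treat $m$ coprime to $6$, i.e.\ $m \equiv 1,5,7,11 \pmod{12}$. For these residues I would look for a pair of coprime cycle lengths $a<b$, each of which is itself even or a multiple of $3$ (so that $\chis{C_a \cartp P_n} = \chis{C_b \cartp P_n} = 6$), together with a single star $6$-edge-coloring of $C_b \cartp P_n$ that \emph{includes} a star $6$-edge-coloring of $C_a \cartp P_n$. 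Lemma~\ref{lem:combine} then gives $\chis{C_{pa+qb} \cartp P_n} \le 6$ for all non-negative $p,q$, and Theorem~\ref{thm:frob} guarantees that $pa+qb$ realises every integer at least $(a-1)(b-1)$; taking $K_1 = (a-1)(b-1)$ and $K_2$ equal to the smallest path length for which the base colorings exist would finish the upper bound.

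This is exactly the scheme already carried out for $n \le 6$ in Theorem~\ref{lem:PnCn} (there with $a=3$, $b=10$, and the fixed path $P_6$), so conceptually the only thing to do is run the same argument with $n$ allowed to grow. An equivalent, and sometimes more flexible, route is to pass to the torus: since $C_m \cartp P_n$ is a subgraph of $C_m \cartp C_{n'}$ for every $n' > n$ and a star $6$-edge-coloring restricts to subgraphs, it would suffice to exhibit, for each problematic $m$, some length $n' > n$ in a convenient class (say $n' \equiv 0 \pmod{12}$) with $\chis{C_m \cartp C_{n'}} = 6$, and then combine in the $m$-direction with $H = C_{n'}$.

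The hard part is the existence of the base colorings once $n$ is large, and here there is a genuine obstruction rather than a mere technicality. The tempting choice $a=3$, $b=4$ \emph{cannot} work: if some star $6$-edge-coloring of $C_4 \cartp P_n$ included one of $C_3 \cartp P_n$, then Lemma~\ref{lem:combine} with $p=q=1$ would force $\chis{C_7 \cartp P_n} \le 6$, contradicting Theorem~\ref{lem:PnC7}. The same phenomenon is what pins $\chis{C_5 \cartp P_n} = \chis{C_7 \cartp P_n} = 7$ for $n \ge 7$, and it is precisely why the conjecture must take $K_1 > 7$. Consequently the real difficulty is to show that this rigidity is a purely short-cycle effect that dissolves once the cycle is long: a long cycle should carry a mostly periodic star $6$-edge-coloring whose length can be adjusted modulo the period inside a bounded ``defect'' region, and two such colorings on coprime lengths $a,b$ (for example $a=3,\,b=8$ or $a=4,\,b=9$, which escape the clean $C_7$ contradiction) should be mutually includable, with the free ends of $P_n$ giving the extra room the tori lack.

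I expect this last step to be the main obstacle, and I believe it is exactly why the statement is posed as Conjecture~\ref{conj:PnCn} rather than proved: constructing these includable base colorings uniformly in the residue class and in $n$ is equivalent to certifying that the $7^-$ entries of Table~\ref{tbl:cylinders} are in fact $6$, and no such general construction is presently known. The most plausible route to settling it is therefore a computer search for one includable pair of base colorings on a good coprime pair of ``even/multiple-of-$3$'' lengths that provably extends to every large $n$, after which Lemma~\ref{lem:combine} and Theorem~\ref{thm:frob} would deliver the conclusion for all $m \ge (a-1)(b-1)$ and all $n \ge K_2$.
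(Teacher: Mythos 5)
You have not produced a proof, and none could be expected: the statement you were given is Conjecture~\ref{conj:PnCn} of the paper, which the authors pose as an open problem (they say only that they are ``very confident'' it holds). There is therefore no proof in the paper to compare your attempt against, and your proposal correctly recognizes this --- your final paragraphs concede that the key step, constructing includable star $6$-edge-colorings on coprime cycle lengths uniformly in $n$, is exactly what is not known. So the verdict is: your submission is an accurate assessment of the state of the problem rather than a proof, and the ``gap'' in it is the conjecture itself.

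That said, the partial analysis you give is sound and uses the paper's machinery correctly. The reduction is right: Corollary~\ref{cor:lower} gives the lower bound, and Theorems~\ref{lem:PnC2k} and~\ref{lem:PnC3} dispose of every $m$ divisible by $2$ or $3$, leaving precisely the residues $m \equiv 1,5,7,11 \pmod{12}$ where Table~\ref{tbl:cylinders} has its unresolved $7^-$ entries. Your obstruction argument is also valid and is a genuine observation: if some star $6$-edge-coloring of $C_4 \cartp P_n$ included one of $C_3 \cartp P_n$, then Lemma~\ref{lem:combine} with $p=q=1$ would give $\chis{C_7 \cartp P_n} \le 6$, contradicting Theorem~\ref{lem:PnC7} for $n \ge 7$; this correctly explains why any valid $K_1$ must exceed $7$ and why the naive pair $(3,4)$ cannot serve as the base of the Frobenius argument. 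Two small caveats. First, your claim that constructing the includable base colorings is ``equivalent'' to certifying the $7^-$ entries of Table~\ref{tbl:cylinders} overstates matters: inclusion would imply those entries are $6$ for $m$ beyond the Frobenius bound, but the converse fails, since colorings of the individual graphs need not nest in the way Lemma~\ref{lem:combine} demands. Second, the values $\chis{C_5 \cartp P_n} = \chis{C_7 \cartp P_n} = 7$ for $n \ge 7$ are established in the paper by computer search (Algorithm~\ref{alg:color}), not by the inclusion obstruction, so attributing them to ``the same phenomenon'' is a heuristic gloss rather than what the paper proves.
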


Note that Conjecture~\ref{conj:PnCn} is equivalent to the following.
\begin{conjecture}
	\label{conj:PnCn'}
	There exists a constant $K$ such that for every integer $m$, where $m \ge K$, 
	there exists an integer $n$ such that
	$$
		\chis{C_m \cartp C_n} = 6\,.
	$$
\end{conjecture}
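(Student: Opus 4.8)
The plan is to prove the \emph{equivalence} asserted just above the statement, namely that Conjecture~\ref{conj:PnCn} and Conjecture~\ref{conj:PnCn'} stand or fall together; since the conjecture itself is open, establishing this equivalence is exactly what the surrounding text promises and is what is within reach. I would treat the two implications separately. The easy direction is Conjecture~\ref{conj:PnCn'} $\Rightarrow$ Conjecture~\ref{conj:PnCn}. Suppose there is a constant $K$ such that every $m \ge K$ admits some $n = n(m)$ with $\chis{C_m \cartp C_n} = 6$. By Lemma~\ref{lem:tiling}, $\chis{C_m \cartp C_{kn}} \le \chis{C_m \cartp C_n} = 6$ for every $k \ge 2$, so the second factor can be taken to be an arbitrarily long cycle. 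As $P_{n'}$ is a subgraph of $C_{kn}$ whenever $kn \ge n'$, the cylinder $C_m \cartp P_{n'}$ is a subgraph of $C_m \cartp C_{kn}$, and since any star edge-coloring restricts to a star edge-coloring of a subgraph, $\chis{C_m \cartp P_{n'}} \le 6$. Combined with the lower bound $\chis{C_m \cartp P_{n'}} \ge 6$ from Corollary~\ref{cor:lower}, this gives equality, so Conjecture~\ref{conj:PnCn} holds with $K_1 = K$ and $K_2 = 3$.

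The substantive direction is Conjecture~\ref{conj:PnCn} $\Rightarrow$ Conjecture~\ref{conj:PnCn'}, which I would prove by a periodicity-from-bounded-state argument. Fix $m \ge K_1$ and let $N$ be very large; by Conjecture~\ref{conj:PnCn} there is a star $6$-edge-coloring $\sigma$ of $C_m \cartp P_N$. For each fiber index $t$ I would define the \emph{state} $s(t)$ as the restriction of $\sigma$ to a window of $w$ consecutive $C_m$-fibers, recording the colors of their $C_m$-edges together with the colors of the $P$-edges joining them, where $w$ is chosen to exceed the maximal number of fibers that any $4$-path or $4$-cycle can meet. A $4$-path spans at most five consecutive fibers and a $4$-cycle at most two, so a fixed $w = 5$ suffices. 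For fixed $m$ the number of possible states is bounded by a function of $m$ alone, hence finite, so taking $N$ large enough the pigeonhole principle yields indices $i < j$ with $j - i \ge 3$ and $s(i) = s(j)$ (one restricts to indices spaced at least three apart to guarantee a valid cycle length).

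I would then cut the cylinder between the two identical windows and glue the ends, transporting $\sigma$ to a coloring $\tau$ of $C_m \cartp C_{j-i}$. Because the windows at $i$ and $j$ carry the same coloring, every $4$-path and every $4$-cycle of $C_m \cartp C_{j-i}$ that crosses the seam, once unrolled, coincides with a configuration already present in $C_m \cartp P_N$ under $\sigma$, and is therefore not bichromatic; configurations avoiding the seam inherit the property directly. Hence $\tau$ is a star $6$-edge-coloring, and with Corollary~\ref{cor:lower} we obtain $\chis{C_m \cartp C_{j-i}} = 6$, establishing Conjecture~\ref{conj:PnCn'} with $K = K_1$. The hard part will be making the gluing step rigorous: one must pin down the precise window width and carefully bookkeep the $P$-edges incident to the boundary fibers so that equality of the two windows provably forces every seam-crossing $4$-path and $4$-cycle to match a genuine configuration of the original cylinder, and one must confirm that the period $j-i$ can be forced to be at least $3$. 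Everything else reduces to routine uses of Lemma~\ref{lem:tiling}, Corollary~\ref{cor:lower}, and subgraph monotonicity.
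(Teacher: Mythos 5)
The statement you were asked to prove is an open conjecture: the paper contains no proof of it, and indeed poses it precisely because the authors could not settle it. The only thing the paper does assert in its vicinity --- without any justification --- is that it is equivalent to Conjecture~\ref{conj:PnCn}. Your decision to prove that equivalence instead is therefore the right reading of the task, and it supplies an argument the paper omits entirely; but be clear that nothing in your proposal (or in the paper) proves the conjecture itself, so it must not be presented as a proof of the statement. Within the reframed goal, your easy direction is complete and correct: Conjecture~\ref{conj:PnCn'} together with Lemma~\ref{lem:tiling} gives star $6$-edge-colorings of $C_m \cartp C_{kn(m)}$ for all $k$, subgraph monotonicity (via Observation~\ref{obs:sub}) transfers these to $C_m \cartp P_{n'}$ for every $n'$, and Corollary~\ref{cor:lower} supplies the matching lower bound, yielding Conjecture~\ref{conj:PnCn} with $K_1 = K$, $K_2 = 3$.

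Your substantive direction (pigeonhole on window-states of a long cylinder, then cut-and-glue) is sound in outline, and the ``hard part'' you flag is genuinely where all the content sits, so let me name the cases it must cover. With window width $w=5$ and two indices $i<j$ with $s(i)=s(j)$, a configuration crossing the seam \emph{once} unrolls, as you say, to a configuration of $\sigma$ on fibers $j-4,\dots,j+4$, and the window equality makes the colors agree. But when the period $j-i$ is $3$ or $4$ there are wrap-around configurations for which ``coincides with a configuration already present'' is not literally true: a $C_4$-fiber of $C_m \cartp C_4$ wrapping the seam unrolls to a straight $4$-\emph{path}, not a $4$-cycle, and one must separately observe that a non-bichromatic $4$-path (colors not $abab$) certifies the glued $4$-cycle non-bichromatic; likewise a $4$-path crossing the seam twice (possible already for $j-i=3$, with the two crossings at distinct $C_m$-coordinates) necessarily crosses in opposite directions --- same-direction double crossings force a repeated vertex --- and unrolls to a \emph{folded} $4$-path or $4$-cycle straddling fibers $j-2,\dots,j$, which is again a genuine configuration of the cylinder. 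Once these cases are spelled out, and $j-i \ge 3$ is forced by pigeonholing within a residue class modulo $3$ as you indicate, the gluing goes through and yields $\chis{C_m \cartp C_{j-i}} = 6$ with $K = K_1$. So: correct reframing, one direction fully rigorous, the other a standard compactness-transfer argument whose remaining gaps are exactly the wrap-around case analysis above; filled in, it would substantiate the equivalence the paper states without proof, which is strictly more than the paper itself provides.
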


In fact, we believe (with a bit lower confidence) that the following stronger version of Conjecture~\ref{conj:PnCn} can also be confirmed.
\begin{conjecture}
	\label{conj:CnCn}
	There exists a constant $L$ such that for every pair of integers $m$ and $n$, where $m,n \ge L$,
	we have
	$$
		\chis{C_m \cartp C_n} = 6\,.
	$$
\end{conjecture}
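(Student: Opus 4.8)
The plan is to reduce every product $C_m \cartp C_n$ to finitely many base cases already in hand, using tiling to shrink one factor and a two-dimensional Frobenius argument to handle the bulk. First I would record that Theorems~\ref{lem:C3Cn}--\ref{lem:C7Cn} already yield $\chis{C_m \cartp C_n} \le 7$ for all $3 \le m \le 7$ and $n \ge 3$; in particular, Theorem~\ref{lem:C4kC2n}, Proposition~\ref{lem:C4Cm}, Theorem~\ref{lem:C4Cn}, together with $\chis{C_3 \cartp C_4} = 7$, cover $\chis{C_4 \cartp C_n} \le 7$ for every $n \ge 3$. I would then dispose of $m \in \set{8,9,10}$ through Lemma~\ref{lem:tiling}: writing $8 = 2\cdot 4$, $9 = 3\cdot 3$, and $10 = 2\cdot 5$ gives $\chis{C_8 \cartp C_n} \le \chis{C_4 \cartp C_n}$, $\chis{C_9 \cartp C_n} \le \chis{C_3 \cartp C_n}$, and $\chis{C_{10} \cartp C_n} \le \chis{C_5 \cartp C_n}$, each at most $7$ for all $n$. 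Invoking the isomorphism $C_m \cartp C_n \cong C_n \cartp C_m$, this settles every pair with $\min\set{m,n} \le 10$.

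The remaining pairs have both factors at least $11$, and here length $11$ is special: the Frobenius number of $\set{3,7}$ is $3\cdot 7 - 3 - 7 = 11$, so $11$ is the unique positive integer not expressible as $3p+7q$ with $p,q \ge 0$, and it therefore slips through the combining machinery below. I would handle $C_{11}\cartp C_{11}$ directly from the explicit coloring in Figure~\ref{fig:C11C11}. For $C_{11}\cartp C_n$ with $n \ge 12$, I would instead apply Lemma~\ref{lem:combine} in the second coordinate, starting from a coloring of $C_{11}\cartp C_7$ that includes a coloring of $C_{11}\cartp C_3$ (extracted from the explicit colorings of the figures); since $n = 3p+7q$ then ranges over all integers at least $12$ by Theorem~\ref{thm:frob}, this closes the case $m = 11$.

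The core of the proof is the range $m,n \ge 12$, which I would obtain by iterating Lemma~\ref{lem:combine} in both coordinates from a single master coloring of $C_7\cartp C_7$. The coloring in Figure~\ref{fig:C7C7} includes a coloring of $C_3\cartp C_7$ (contracting the first factor) and a coloring of $C_7\cartp C_3$ (contracting the second factor), and these two include a \emph{common} coloring of $C_3\cartp C_3$. Feeding the pair $(C_7\cartp C_7,\,C_3\cartp C_7)$ into Lemma~\ref{lem:combine} in the first coordinate produces colorings of $C_{3p+7q}\cartp C_7$, i.e.\ of $C_m\cartp C_7$ for every $m \ge 12$. The shared $C_3\cartp C_3$ ensures each such coloring still includes a coloring of $C_m\cartp C_3$ in the second coordinate, so a second application of Lemma~\ref{lem:combine}, now with the pair $(C_m\cartp C_7,\,C_m\cartp C_3)$, yields $C_m\cartp C_n$ for every $n \ge 12$.

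The step I expect to be the main obstacle is checking that the second-coordinate inclusion survives the first round of combining. Lemma~\ref{lem:combine} is phrased for a single cyclic factor, so after stitching copies of the $C_3\cartp C_7$ and $C_7\cartp C_7$ blocks along the first coordinate I must verify that the resulting coloring of $C_m\cartp C_7$, restricted to three consecutive second-factor fibers and closed up with the prescribed wrap-around edges, is genuinely a star edge-coloring of $C_m\cartp C_3$. This amounts to confirming that the two blocks agree on their second-coordinate contraction---precisely the function of the common $C_3\cartp C_3$ sub-coloring---so that the two combining directions are compatible. Once this commutativity is pinned down, the two Frobenius applications are routine and give $\chis{C_m\cartp C_n} \le 7$ for all $m,n \ge 12$, completing the proof.
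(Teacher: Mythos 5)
The statement you were asked to prove is Conjecture~\ref{conj:CnCn}, which asserts the \emph{exact value} $\chis{C_m \cartp C_n} = 6$ for all sufficiently large $m$ and $n$ --- a statement the paper leaves open. Your argument never produces a single star $6$-edge-coloring: the tiling reductions, the Frobenius step, and the two-coordinate combining from the master coloring of $C_7 \cartp C_7$ (including the compatibility check via the common $C_3 \cartp C_3$ sub-coloring) reproduce, in essentially the same way, the paper's proof of Theorem~\ref{lem:CmCn}, i.e.\ the upper bound $\chis{C_m \cartp C_n} \le 7$. Combined with Corollary~\ref{cor:lower} this pins the value to $\set{6,7}$, but it says nothing about which of the two occurs; the conjecture is precisely the claim that for large $m,n$ it is always $6$, so your concluding sentence proves the wrong statement.

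To prove the conjecture you would need star $6$-edge-colorings of $C_m \cartp C_n$ for all large $m,n$, and here the machinery you invoke genuinely fails. Every known $6$-coloring carries a divisibility constraint: $C_{3k} \cartp C_{3\ell}$ (Corollary~\ref{cor:C3kC3k}), $C_{4k} \cartp C_{2\ell}$ (Theorem~\ref{lem:C4kC2n}), and the cases of Theorem~\ref{lem:C6Cn}. Lemmas~\ref{lem:tiling} and~\ref{lem:combine} can only propagate a $6$-coloring from base $6$-colorings whose cycle lengths generate the required residues, and no base $6$-colorings are known when both factors avoid these patterns --- e.g.\ both lengths odd and not divisible by $3$, such as $C_{11} \cartp C_{11}$ or $C_{25} \cartp C_{25}$; these are exactly the entries marked $7^-$ in Table~\ref{tbl:toroidal}. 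Moreover, Theorems~\ref{lem:C5Cn} and~\ref{lem:C7Cn} show the value $7$ does occur for \emph{every} $n$ when $m \in \set{5,7}$, so a proof must also explain why this behaviour dies out above the threshold $L$; the authors state explicitly that they could observe no such pattern. (A minor side error: $11$ is the largest, not the unique, positive integer not of the form $3p+7q$ --- e.g.\ $1,2,4,5,8$ are also unrepresentable --- though this does not affect your, in any case off-target, argument.)
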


The above conjectures seem to be challenging, since we were not able to observe any straightforward pattern in colorings of 
different Cartesian products, despite the help of computer in our constructions.

There are also some (maybe) less complicated open questions regarding the Cartesian product of the cycles $C_m$, $m \in \set{11,13,17}$,
with paths of arbitrary lengths.
\begin{question}
	What is the star chromatic index of $C_m \cartp P_n$ for $m \in \set{11,13,17}$ and $n \ge 7$?
\end{question}
As we observed in Theorems~\ref{lem:PnC5} and~\ref{lem:PnC7}, the star chromatic index increases 
for the Cartesian products of the cycles $C_5$ and $C_7$ with paths on at least $7$ vertices.
We expect this phenomenon will repeat also for some cycle $C_m$, where $m \in \set{11,13,17}$,
although, in the case of $C_{11}$ and $C_{17}$, we found star $6$-edge-colorings of $C_{11} \cartp P_8$ and $C_{17} \cartp P_8$, 
which could indicate that $6$ colors are sufficient in these cases.

To conclude, as in the case of complete graphs, also for the Cartesian products of paths and cycles 
(and two cycles), it is hard to determine their chromatic index with our current methods. 
However, it seems that the latter will be easier to resolve as the former.

\paragraph{Acknowledgment.} 

PH and MM acknowledge the financial support from the project GA20--09525S of the Czech Science Foundation.
BL was partly supported by the Slovenian Research Agency Program P1--0383 and the project J1--1692.
EM and RS were supported by the Slovak Research and Development Agency under the Contract No. APVV--15--0116 and by the Slovak VEGA Grant 1/0368/16.

\bibliographystyle{plain}
{\small
	\bibliography{MainBase}
}

\newpage

\appendix

\section{Cartesian products of two cycles}

\begin{figure}[ht]
\caption{Cartesian products of $C_3$ with cycles}

	\begin{subfigure}[t]{.5\textwidth}
	$$
		\includegraphics[scale=\scaleConst]{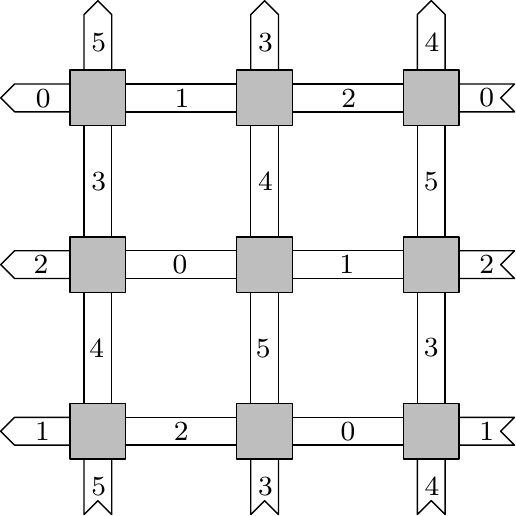}
	$$
		\caption{A star $6$-edge-coloring of $C_3 \cartp C_3$}
		\label{fig:C3C3}
	\end{subfigure}	
	\begin{subfigure}[t]{.4\textwidth}
	$$
		\includegraphics[scale=\scaleConst]{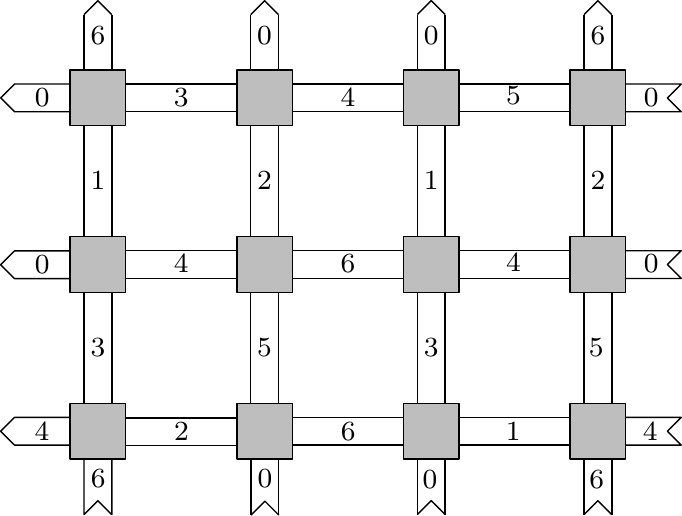}
	$$
		\caption{A star $7$-edge-coloring of $C_3 \cartp C_4$}
		\label{fig:C3C4}
	\end{subfigure}	
	
	\vspace{0,5cm}					
	\begin{subfigure}[t]{.45\textwidth}
	$$	
		\includegraphics[scale=\scaleConst]{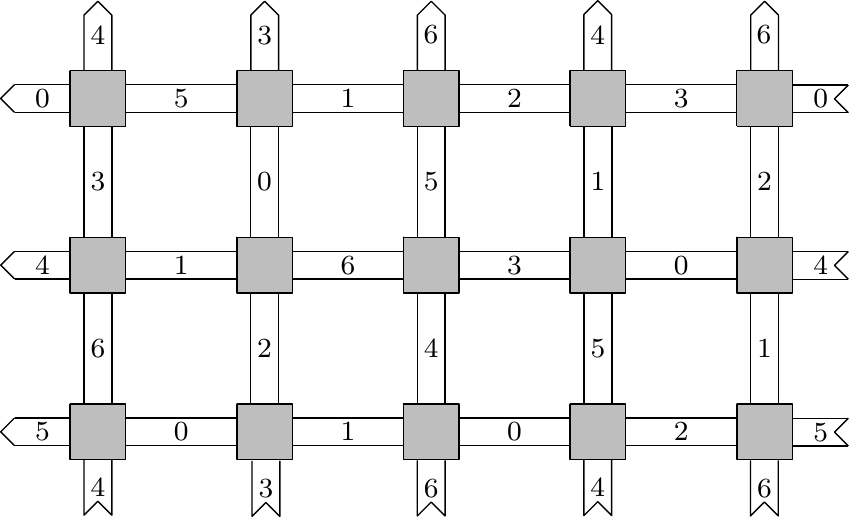}
	$$
		\caption{A star $7$-edge-coloring of $C_3 \cartp C_5$}
		\label{fig:C3C5}
	\end{subfigure}
	\begin{subfigure}[t]{.55\textwidth}
	$$
		\includegraphics[scale=\scaleConst]{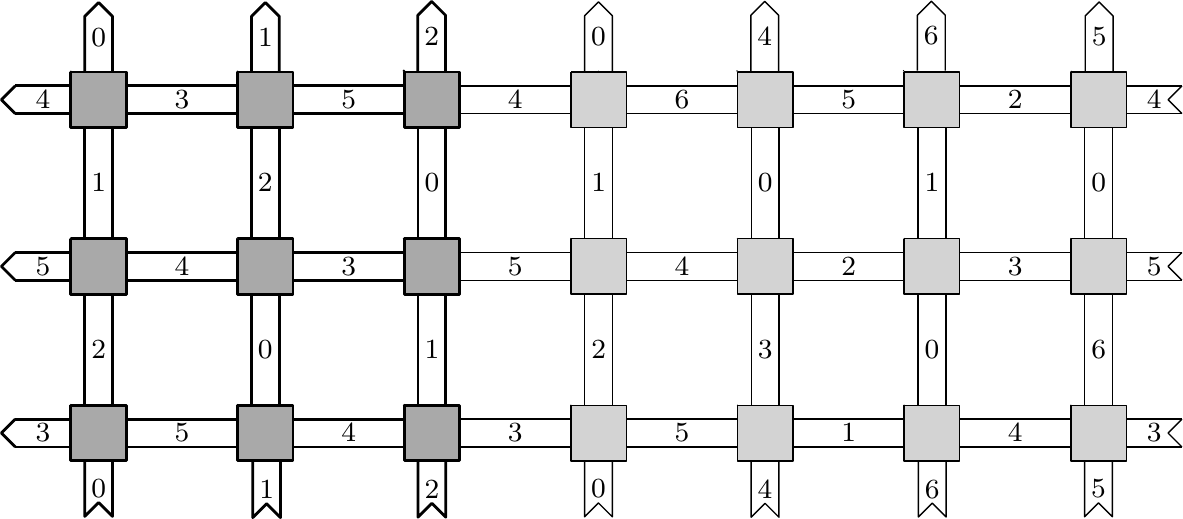}
	$$
		\caption{A star $7$-edge-coloring of $C_3 \cartp C_7$
			including a star $6$-edge-coloring of $C_3 \cartp C_3$ (darker vertices)}
		\label{fig:C3C7}
	\end{subfigure}
	
	\vspace{0,5cm}
	\begin{subfigure}[t]{\textwidth}
	$$
		\includegraphics[scale=\scaleConst]{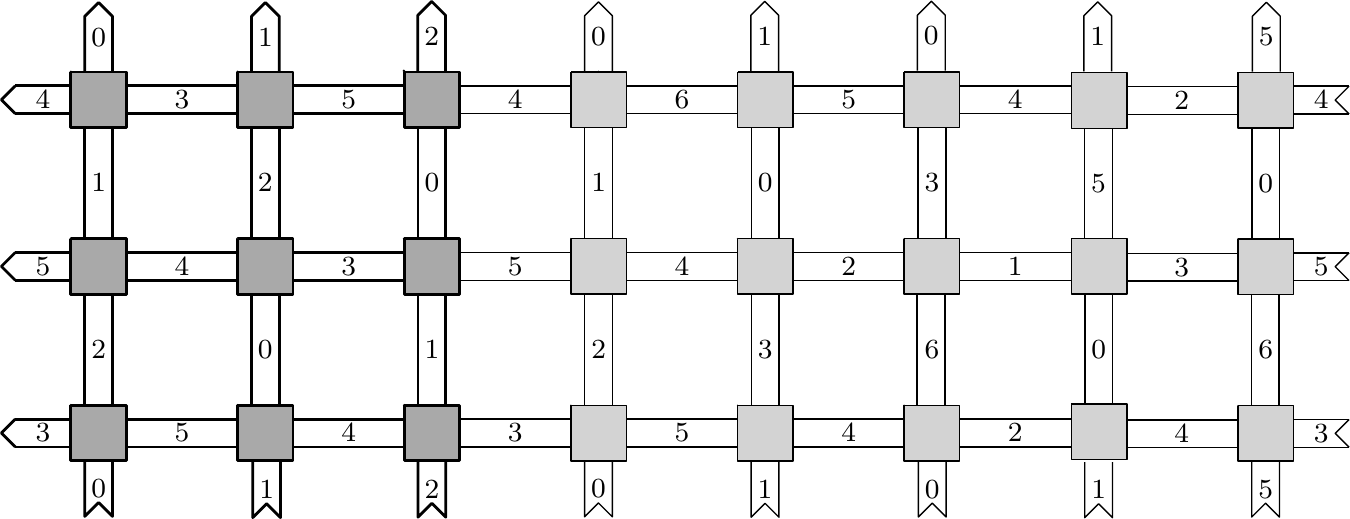}		
	$$
		\caption{A star $7$-edge-coloring of $C_3 \cartp C_8$
			including a star $6$-edge-coloring of $C_3 \cartp C_3$ (darker vertices)}
		\label{fig:C3C8}
	\end{subfigure}
	
\end{figure}

\begin{figure}[ht]
\caption{Cartesian products of $C_4$ with cycles}

	\begin{subfigure}[b]{.45\textwidth}
		$$
			\includegraphics[scale=\scaleConst]{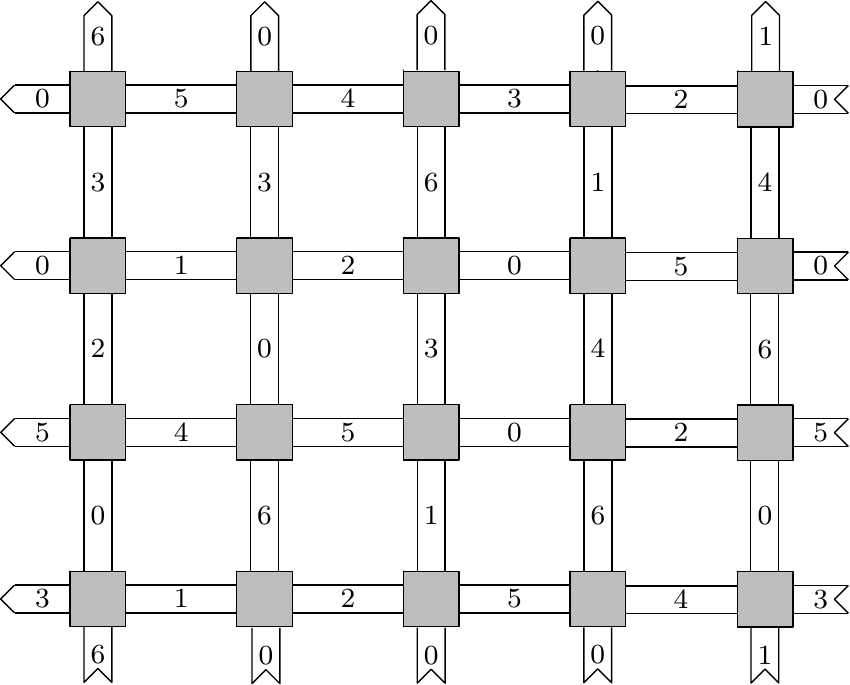}
		$$		
		\caption{A star $7$-edge-coloring of $C_4 \cartp C_5$}
		\label{fig:C4C5}
	\end{subfigure}	
	\begin{subfigure}[b]{.49\textwidth}
		$$
			\includegraphics[scale=\scaleConst]{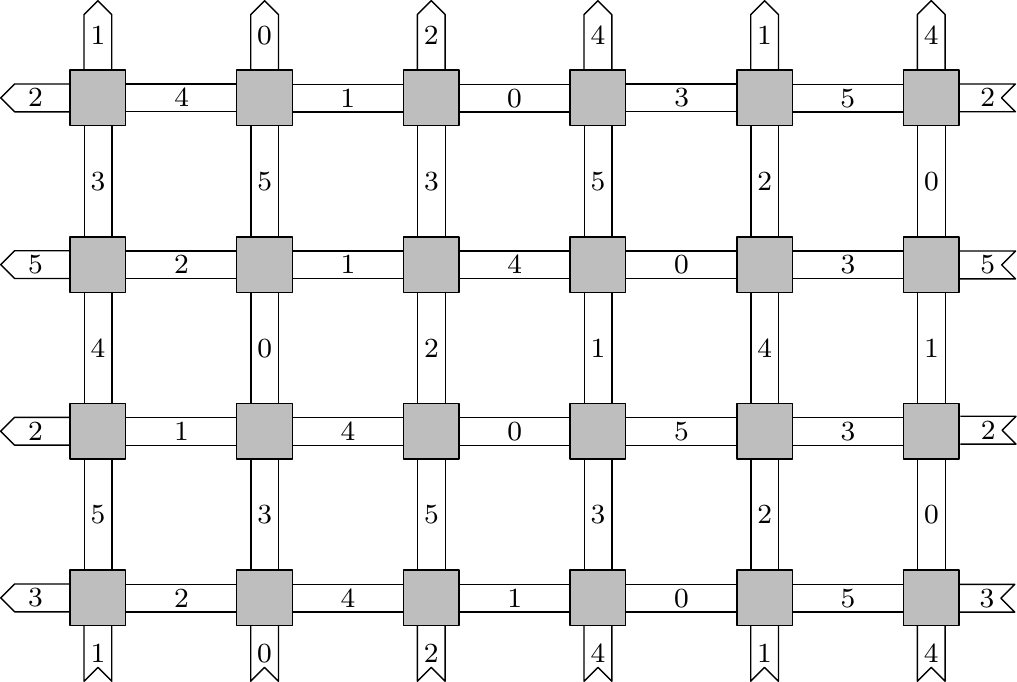}
		$$		
		\caption{A star $6$-edge-coloring of $C_4 \cartp C_6$}
		\label{fig:C4C6}
	\end{subfigure}
	
	\vspace{0.5cm}
	\begin{subfigure}[b]{\textwidth}
		$$
			\includegraphics[scale=\scaleConst]{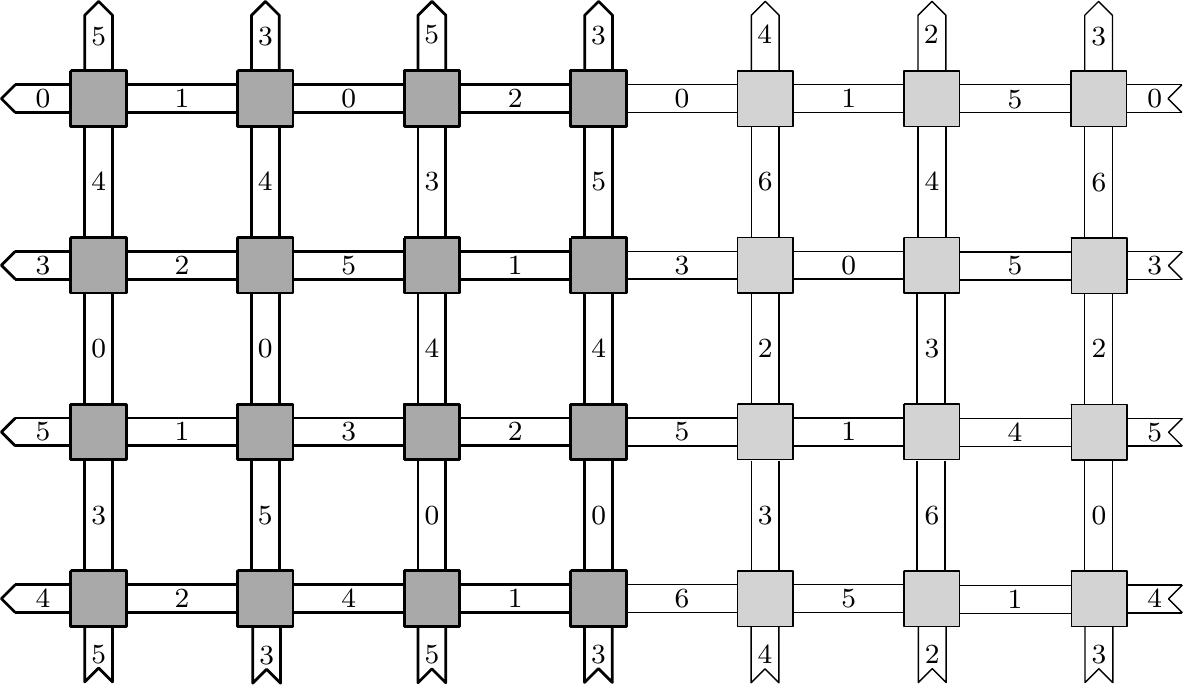}
		$$		
		\caption{A star $7$-edge-coloring of $C_4 \cartp C_7$ 
			including a star edge-coloring of $C_4 \cartp C_4$ (darker vertices)}
		\label{fig:C4C7}
	\end{subfigure}

	\vspace{0.5cm}
	\begin{subfigure}[b]{\textwidth}
		$$
			\includegraphics[scale=\scaleConst]{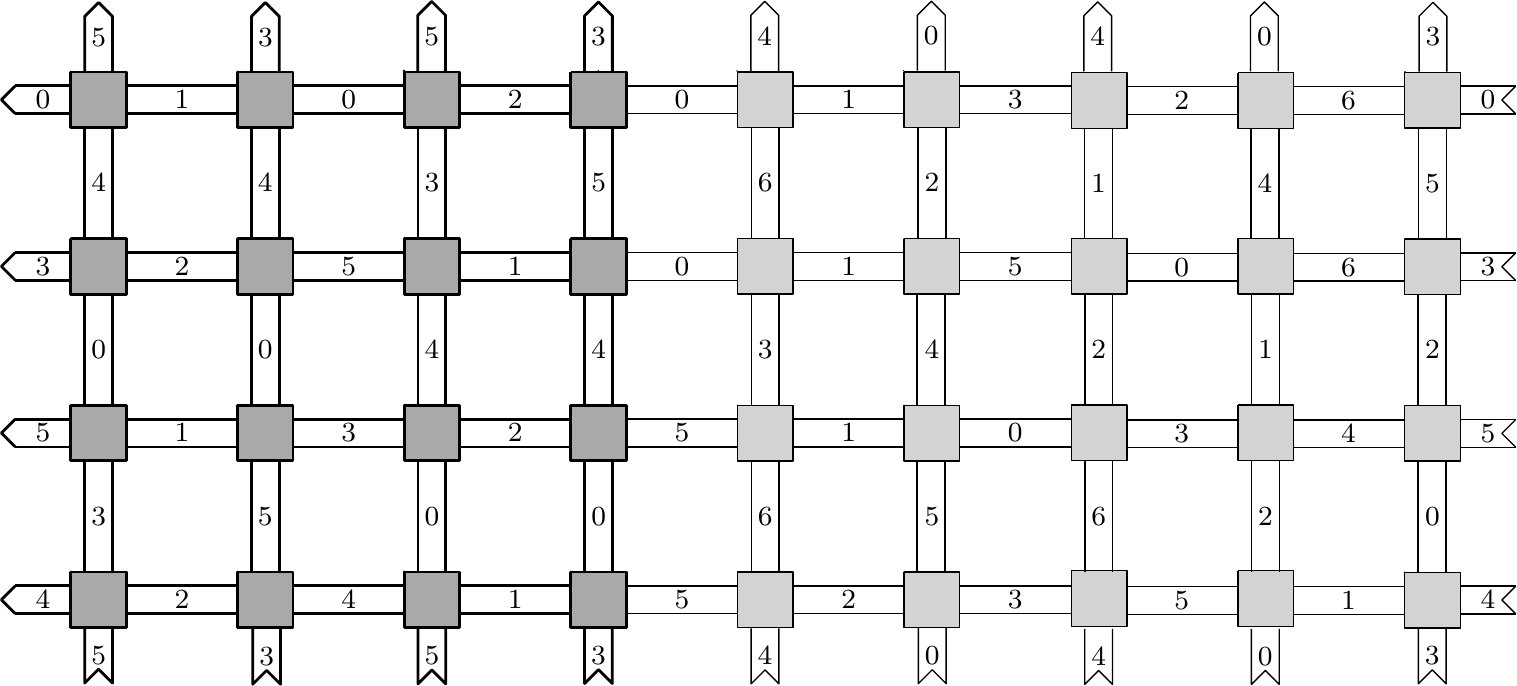}
		$$		
		\caption{A star $7$-edge-coloring of $C_4 \cartp C_9$
			including a star edge-coloring of $C_4 \cartp C_4$ (darker vertices)}
		\label{fig:C4C9}
	\end{subfigure}
	
\end{figure}

\begin{figure}[ht]
	\caption{A star $6$-edge-coloring of $C_4 \cartp C_{10}$ combined of star edge-colorings 
		of $C_4 \cartp C_4$ (lighter vertices) and $C_4 \cartp C_6$ (darker vertices)}
	$$
		\includegraphics[scale=\scaleConst]{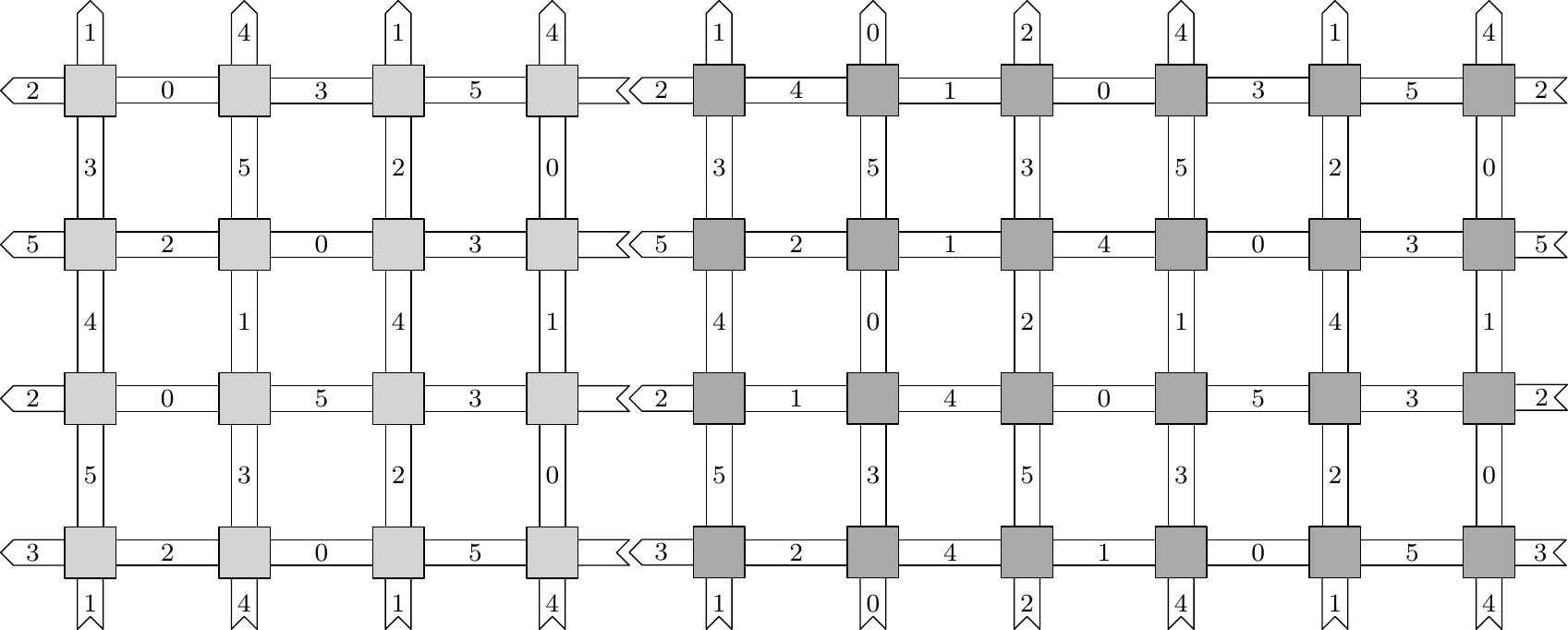}
	$$
	\label{fig:C4C4C6}
\end{figure}

\begin{figure}[ht]
	\caption{Cartesian products of $C_5$ with cycles}

	\begin{subfigure}[t]{.45\textwidth}
		$$
			\includegraphics[scale=\scaleConst]{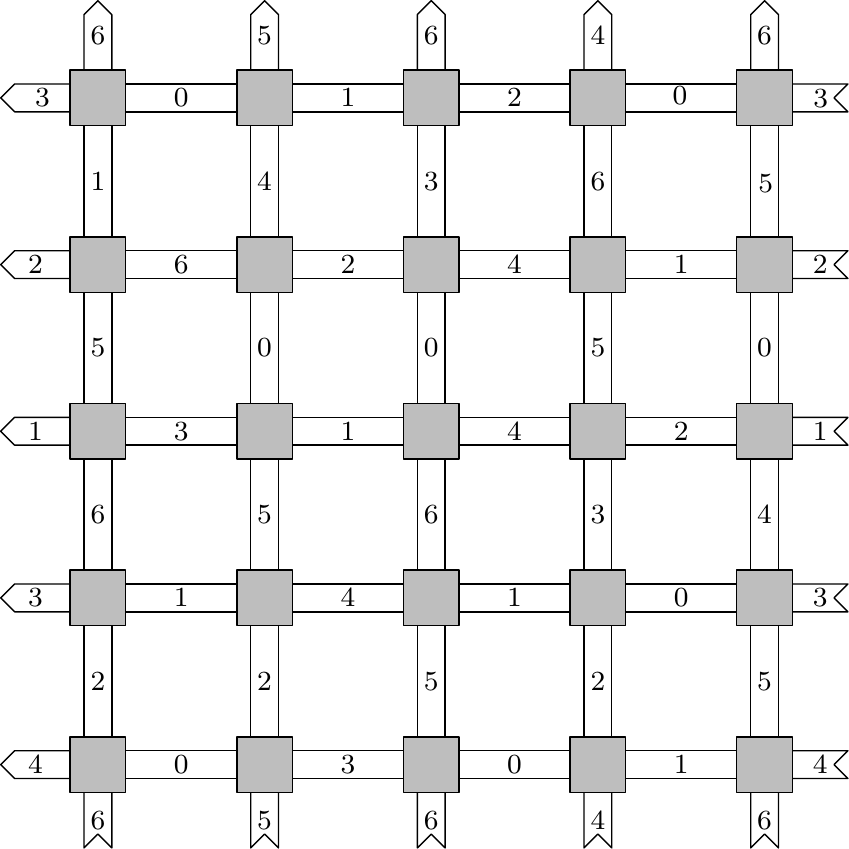}
		$$
		\caption{A star $7$-edge-coloring of $C_5 \cartp C_5$}
		\label{fig:C5C5}
	\end{subfigure}
	\begin{subfigure}[t]{.55\textwidth}
		$$
			\includegraphics[scale=\scaleConst]{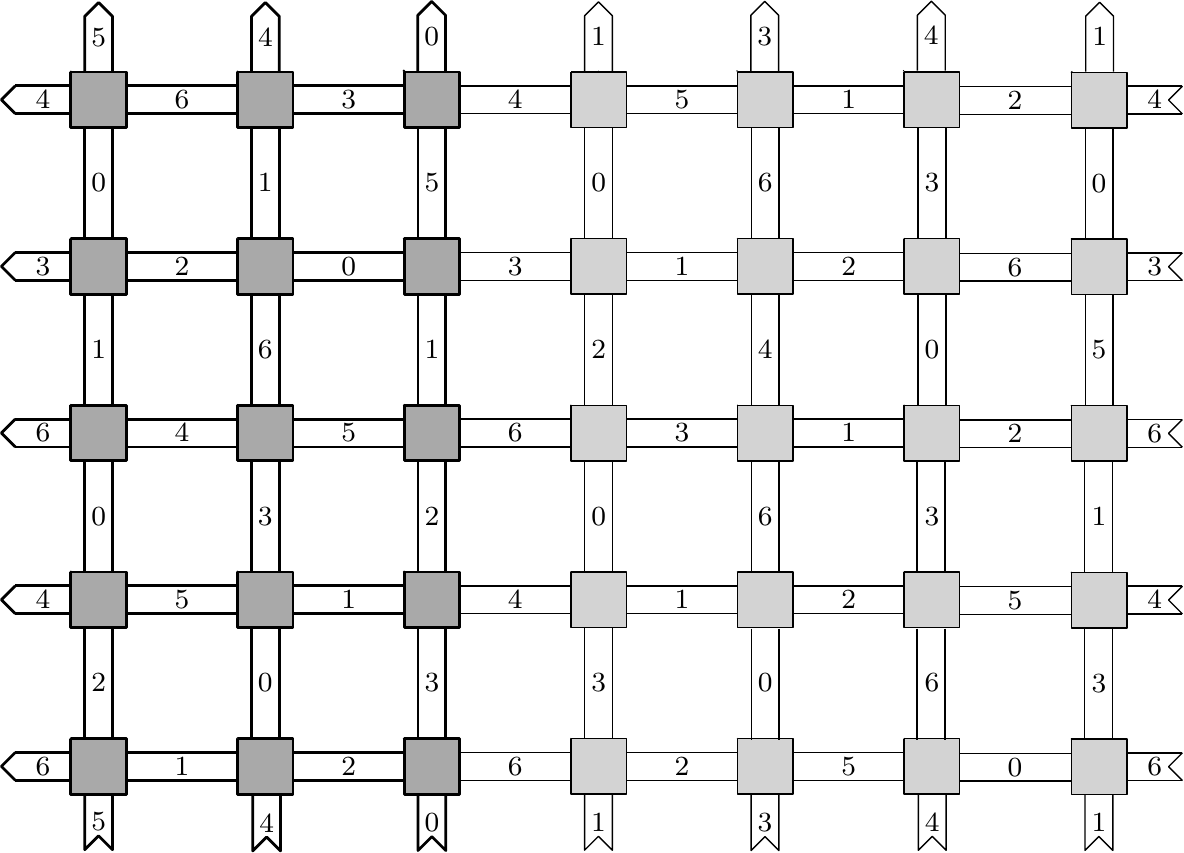}
		$$
		\caption{A star $7$-edge-coloring of $C_5 \cartp C_7$, 
			including a star edge-coloring of $C_5 \cartp C_3$ (darker vertices)}
		\label{fig:C5C7}
		\end{subfigure}
	
			\vspace{0,5cm}
		\begin{subfigure}[t]{\textwidth}
			$$
				\includegraphics[scale=\scaleConst]{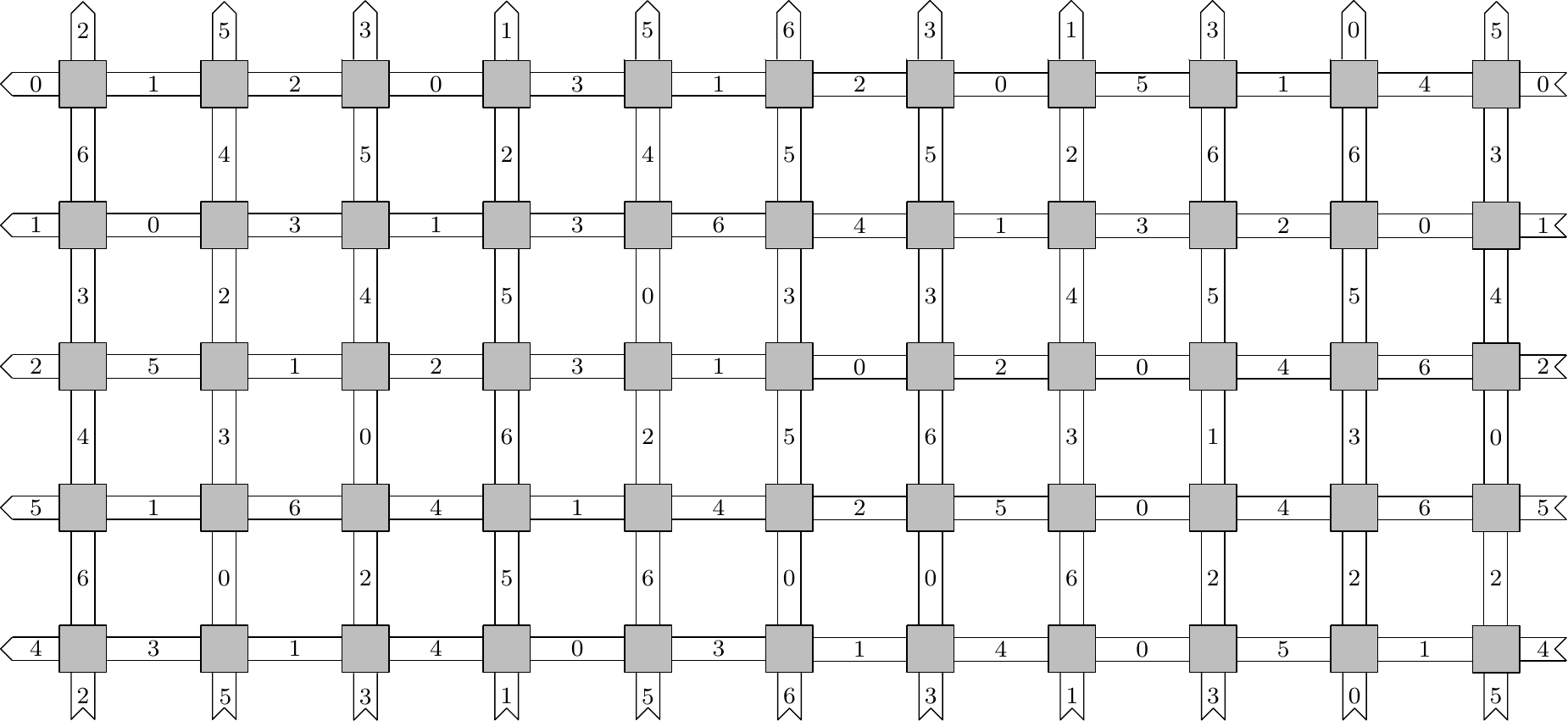}
			$$
		\caption{A star $7$-edge-coloring of $C_5 \cartp C_{11}$}
		\label{fig:C5C11}
		\end{subfigure}

\end{figure}

\begin{figure}[ht]
	\caption{A star $7$-edge-coloring of $C_7 \cartp C_7$ 
		including a star edge-coloring of $C_3 \cartp C_7$ (darker vertices in horizontal direction)
		and a star edge-coloring of $C_7 \cartp C_3$ (darker vertices in vertical direction)}	
	$$
		\includegraphics[scale=\scaleConst]{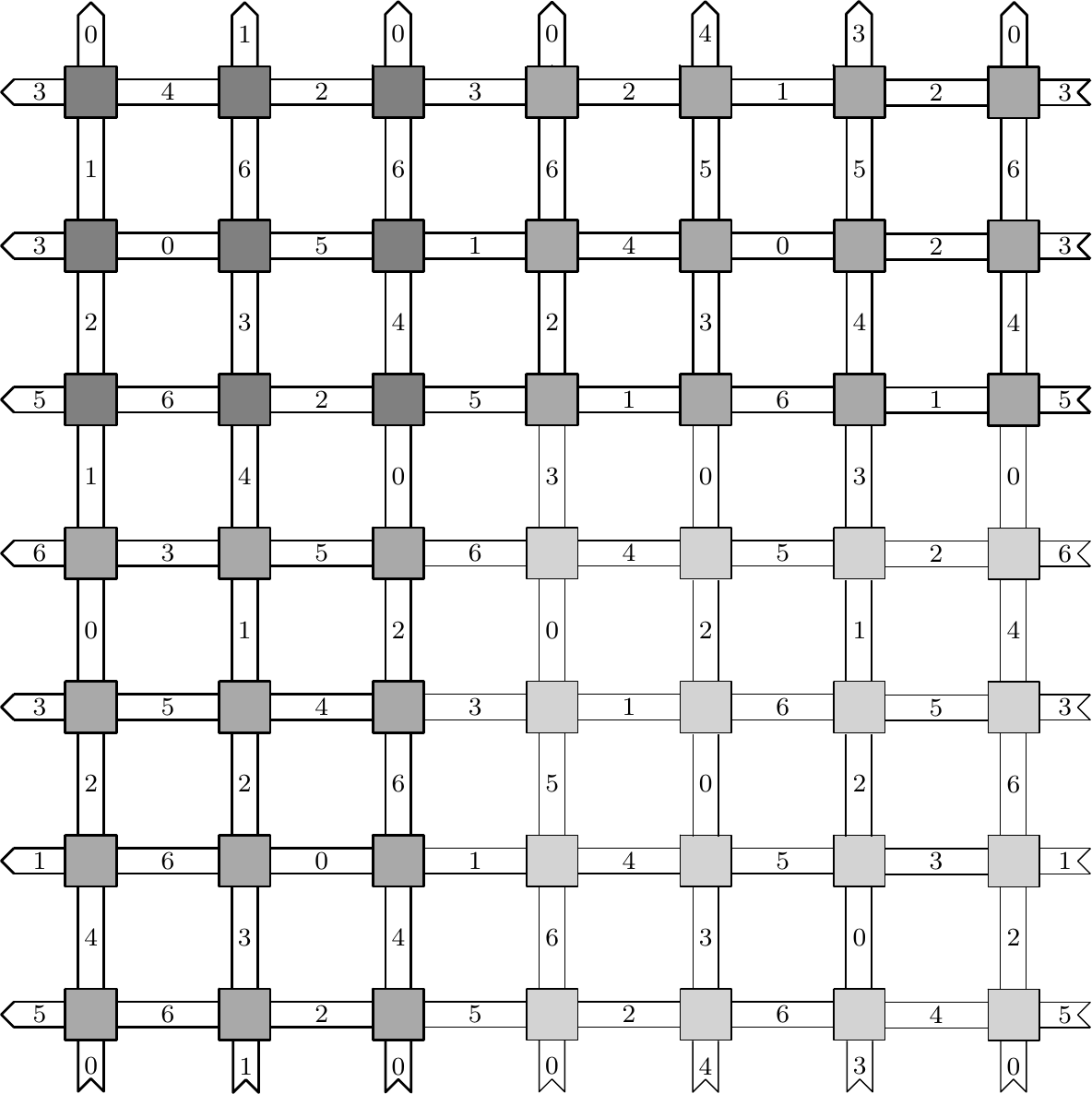}
	$$
	\label{fig:C7C7}
\end{figure}

\begin{figure}[ht]
	\caption{A star $7$-edge-coloring of $C_{11} \cartp C_{11}$
		including a star edge-coloring of $C_7 \cartp C_{11}$ (all darker vertices),
		which furthermore includes a star edge-coloring of $C_3 \cartp C_{11}$ (the darkest vertices above)}
	$$
		\includegraphics[scale=\scaleConst]{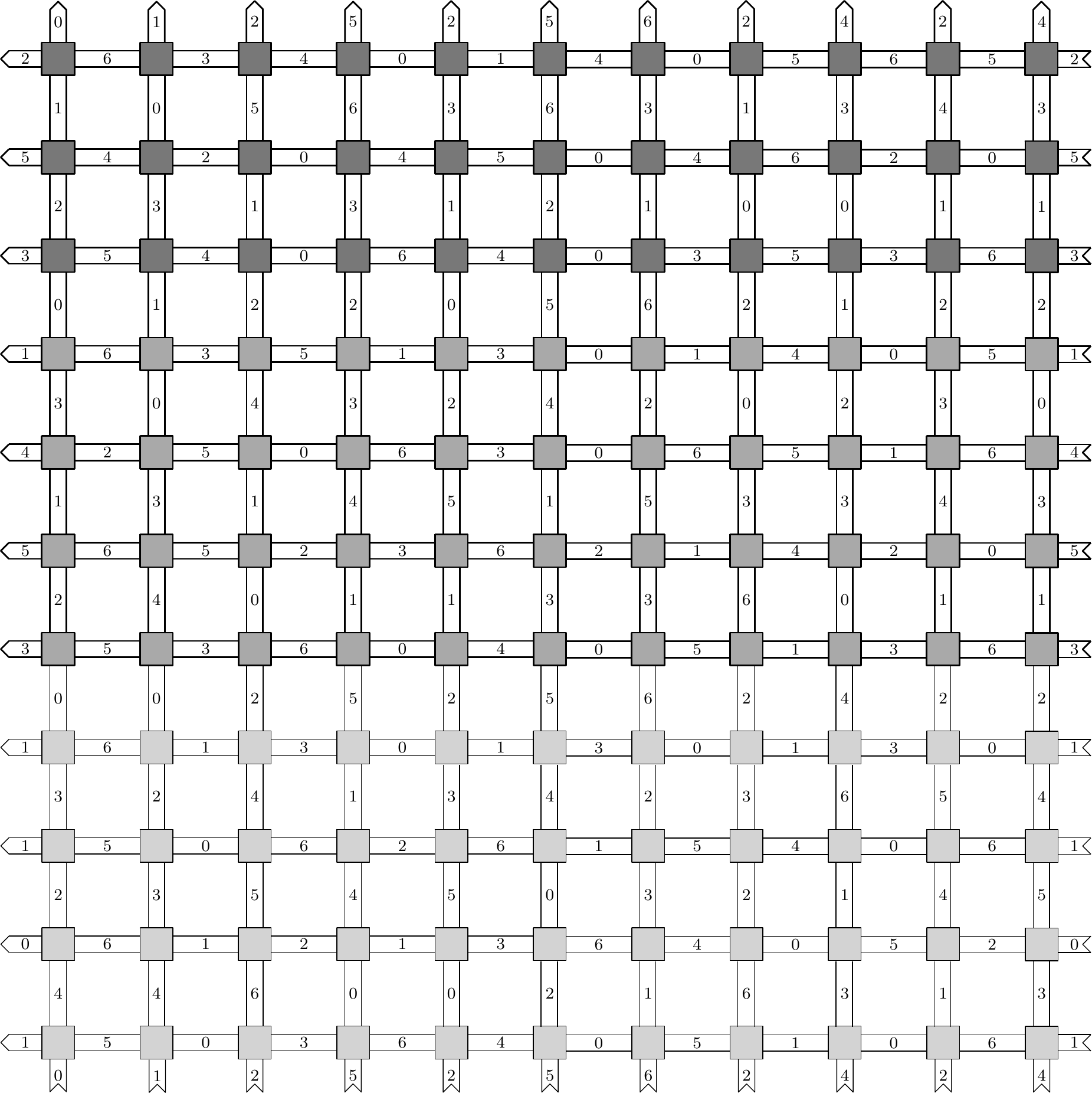}
	$$
	\label{fig:C11C11}
\end{figure}

\clearpage

\section{Cartesian products of paths and cycles}

\begin{figure}[ht!]
\caption{Cartesian products of cycles and $P_6$}
		\begin{subfigure}[t]{.5\textwidth}
			$$
				\includegraphics[scale=\scaleConst]{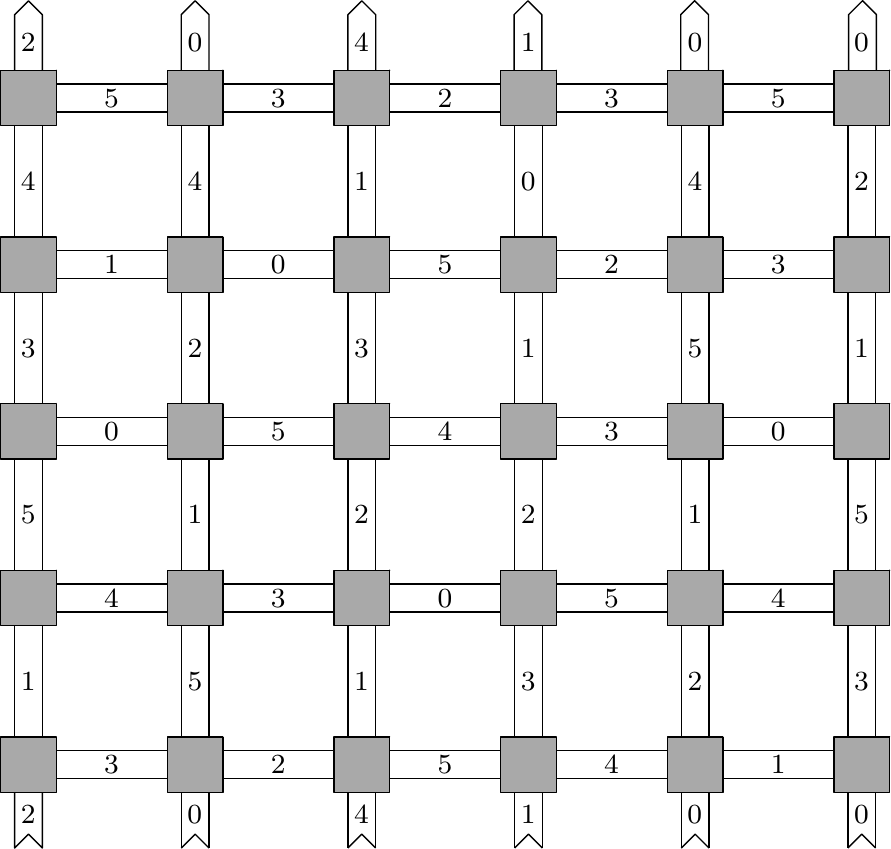}
			$$
			\caption{A star $6$-edge-coloring of $C_5 \cartp P_6$}
			\label{fig:C5P6}
		\end{subfigure}
		
		\begin{subfigure}[b]{.5\textwidth}
			$$				
				\includegraphics[scale=\scaleConst]{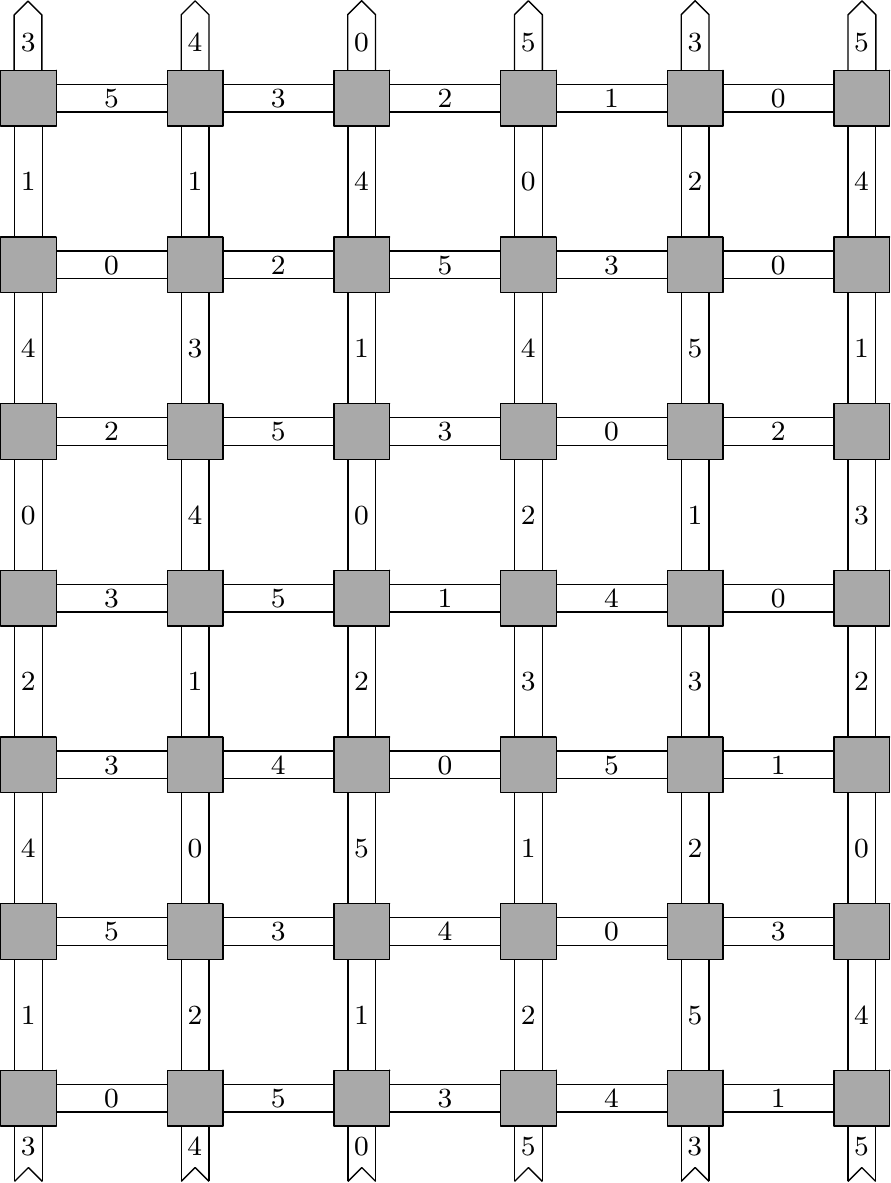}
			$$		
			\caption{A star $6$-edge-coloring of $C_7 \cartp P_6$}
			\label{fig:C7P6}
		\end{subfigure}
		\begin{subfigure}[b]{.5\textwidth}
			$$
				\includegraphics[scale=\scaleConst]{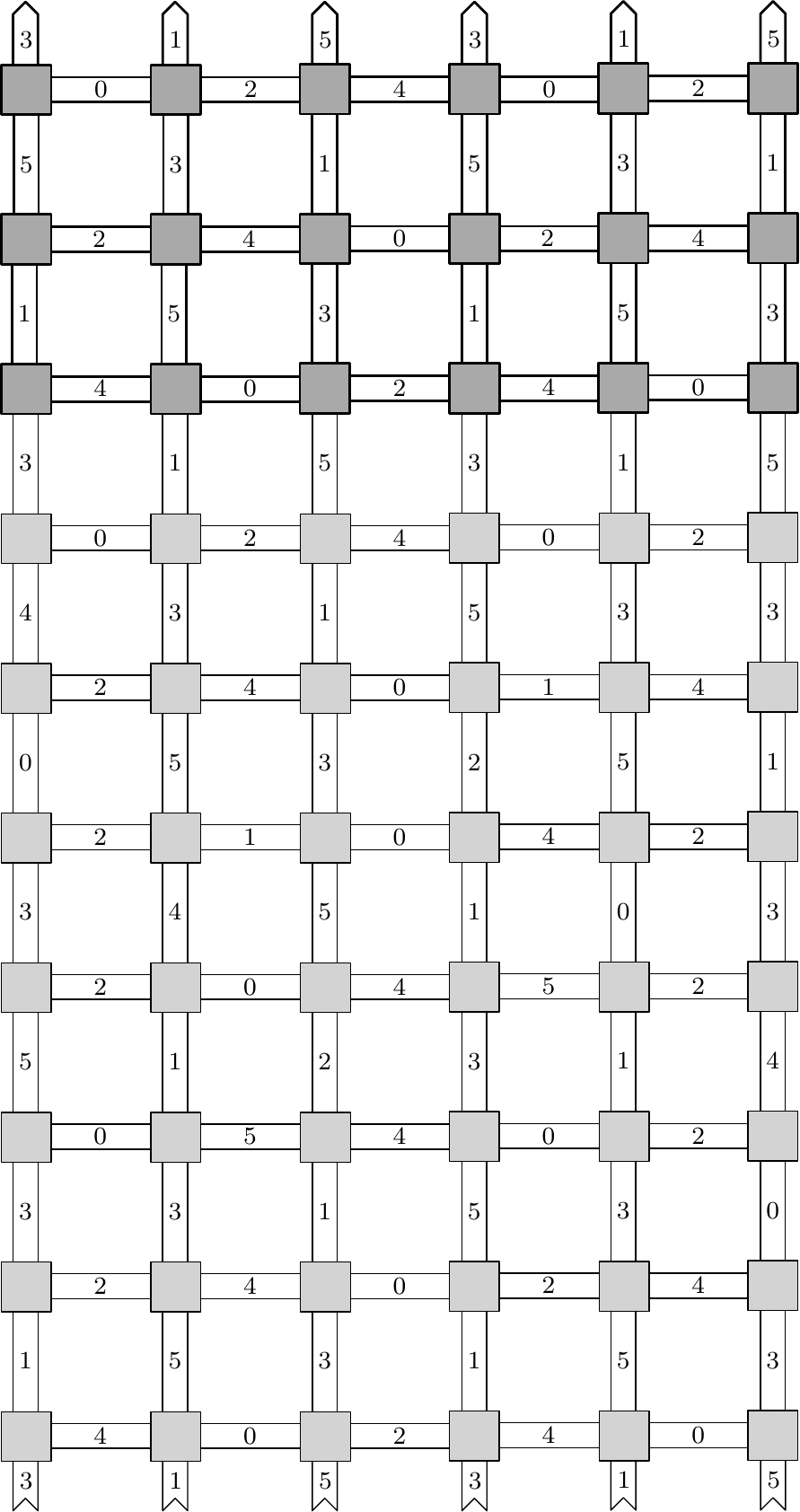}
			$$
			\caption{A star $6$-edge-coloring of $C_{10} \cartp P_6$ 
			including a star edge-coloring of $C_3 \cartp P_6$ (darker vertices)}
			\label{fig:P6C10}
		\end{subfigure}
\end{figure}

\begin{figure}[htp!]
\caption{Cartesian products of cycles and $P_8$}

		\begin{subfigure}[t]{.5\textwidth}
		$$
			\includegraphics[scale=\scaleConst]{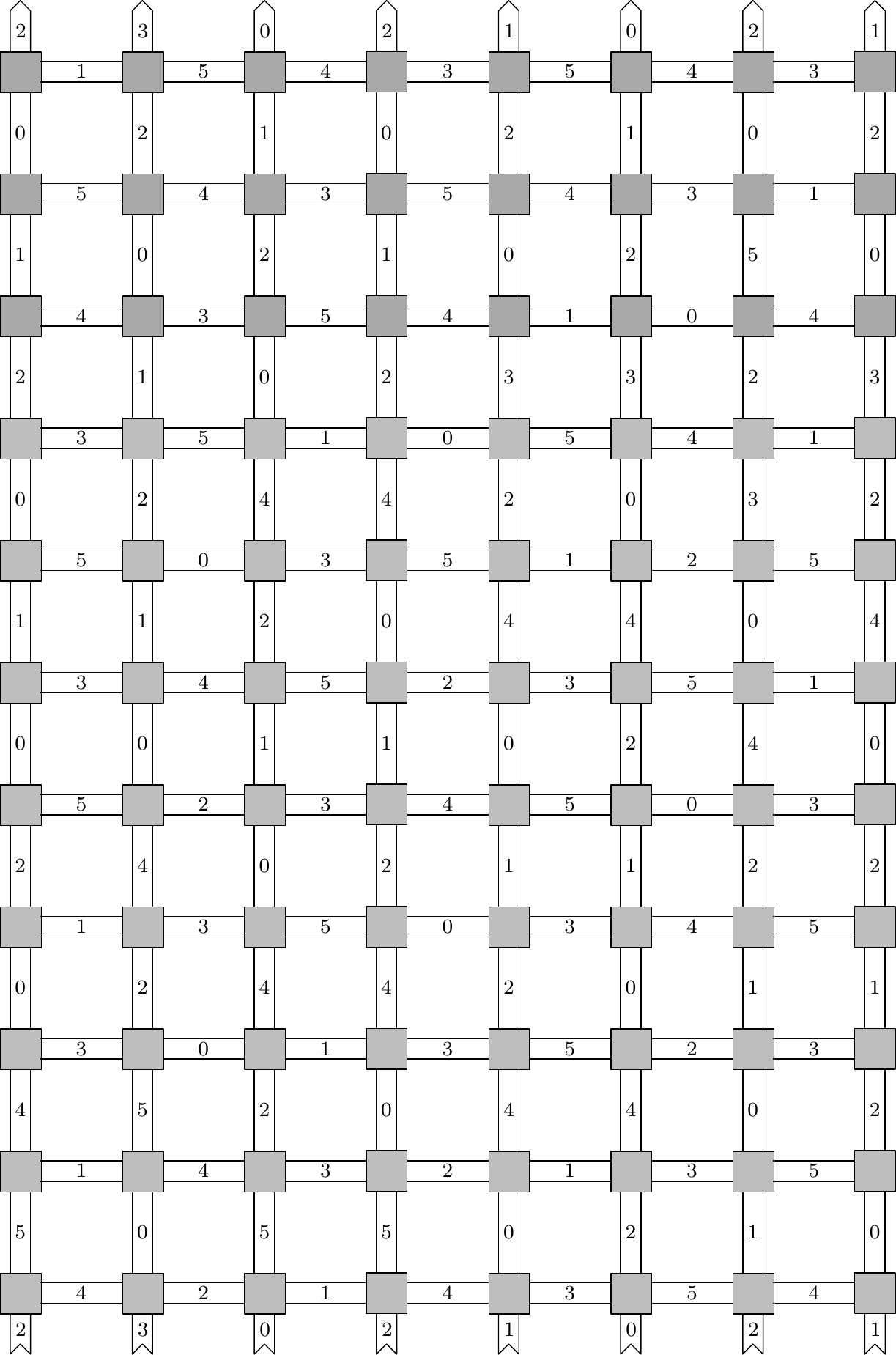}
		$$
		\caption{A star $6$-edge-coloring of $C_{11} \cartp P_8$}
		\label{fig:P8C11}	
		\end{subfigure}
		\begin{subfigure}[t]{.56\textwidth}
		$$
			\includegraphics[scale=\scaleConst]{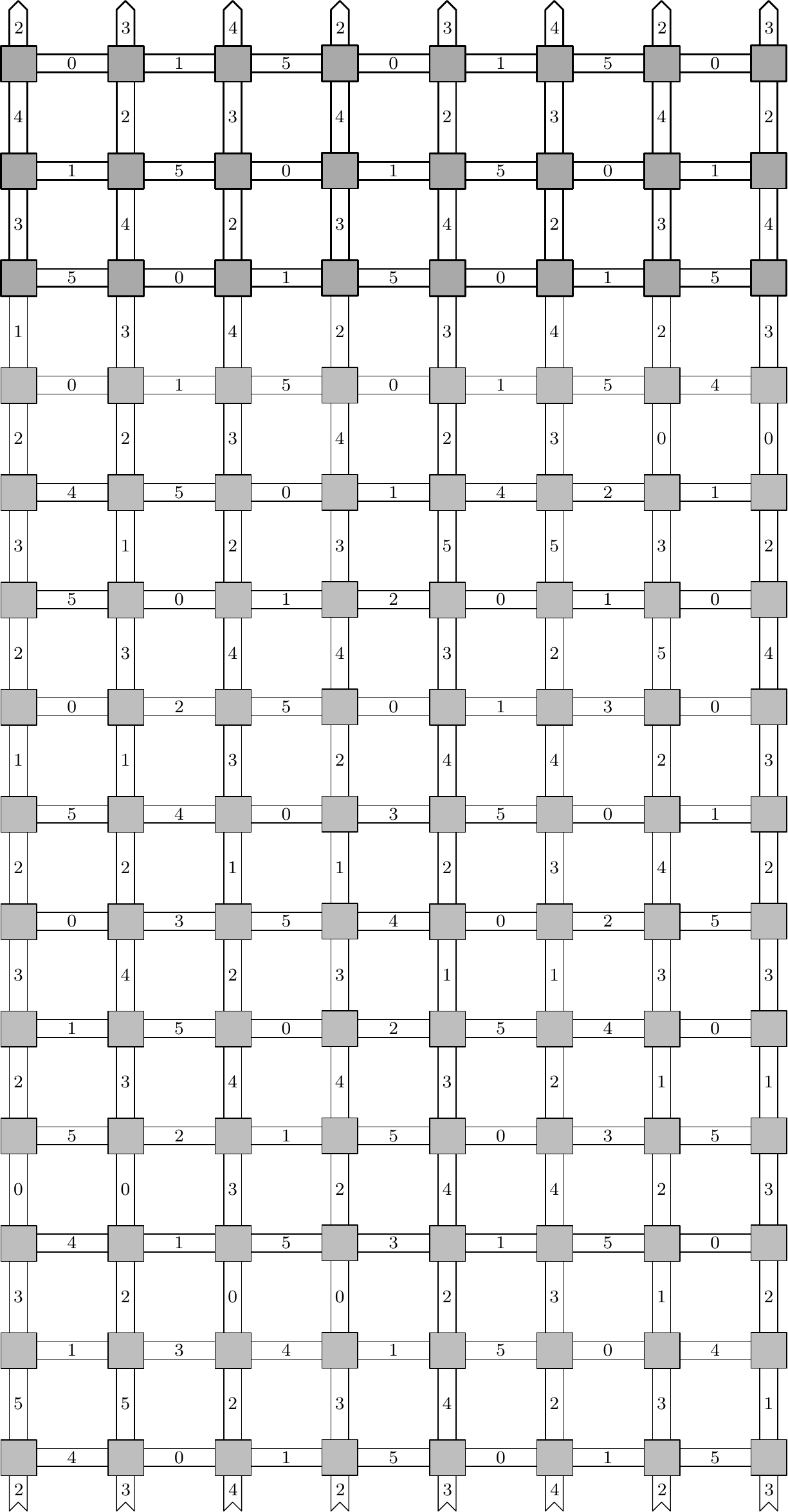}
		$$
		\caption{A star $6$-edge-coloring of $C_{14} \cartp P_8$ 
			including a star edge-coloring of $C_3 \cartp P_8$ (darker vertices)}
		\label{fig:P8C14}
		\end{subfigure}

\end{figure}


\end{document}